\let\oldproofname=\proofname
\renewcommand{\proofname}{\upshape\bfseries{\oldproofname}}
\DeclareMathOperator*{\essinf}{ess\,inf}
\DeclareMathOperator{\supp}{supp}
\providecommand{\abs}[1]{\lvert#1\rvert}
\providecommand{\norm}[1]{\lVert#1\rVert}
\providecommand{\Bignorm}[1]{\Bigl\lVert#1\Bigr\rVert}
\providecommand{\ceil}[1]{\lceil#1\rceil}
\newcommand{\be}{\begin{equation}}
\newcommand{\ee}{\end{equation}}
\newcommand{\iref}[1]{{\rm (\ref{#1})}}
 \newcommand\e{\varepsilon}
 \newcommand\nl{\newline}
\newtheorem{theorem}{Theorem}[section]
\newtheorem{lemma}[theorem]{Lemma}
\newtheorem{prop}[theorem]{Proposition}
\newtheorem{cor}[theorem]{Corollary}
\newtheorem{ass}[theorem]{Assumptions}
\theoremstyle{definition}
\newtheorem{remark}[theorem]{Remark}
\numberwithin{equation}{section}
\newcommand{\cF}{{\mathcal{F}}}
\newcommand{\cT}{\mathcal{T}}
\newcommand{\cS}{\mathcal{S}}
\newcommand{\cV}{\mathcal{V}}
\newcommand{\bn}{{\bf n}}
\newcommand{\bc}{{\bf c}}
\newcommand{\Chi}{\raise .3ex
\hbox{\large $\chi$}} 
\newcommand{\vp}{\varphi}
\newcommand{\R}{\mathbb{R}}
\newcommand{\E}{\mathbb{E}}
\newcommand{\N}{\mathbb{N}}
\begin{document}

\title
{Fully discrete approximation \\ of parametric and stochastic elliptic PDEs
\thanks{%
Markus Bachmayr acknowledges support by the Hausdorff Center of Mathematics, University of Bonn.
Albert Cohen is supported by the Institut Universitaire de France and
by the European Research Council under grant ERC AdG 338977 BREAD.
Dinh D\~ung's research work 
is funded by the Vietnam National Foundation for Science and Technology Development (NAFOSTED) under  Grant No.\ 102.01-2017.05.
Christoph Schwab is supported in part by the Swiss National Science Foundation.}
}
\author{ 
Markus Bachmayr, Albert Cohen, Dinh D\~ung and Christoph Schwab
}
\maketitle
\date{}
\begin{abstract}
It has recently been demonstrated that locality of spatial supports in the parametrization 
of coefficients in elliptic PDEs can lead to improved convergence rates of sparse polynomial expansions 
of the corresponding parameter-dependent solutions. 
These results by themselves do not yield practically realizable approximations, 
since they do not cover the approximation of the arising expansion coefficients, 
which are functions of the spatial variable. 
In this work, we study the combined spatial and parametric approximability for 
elliptic PDEs with affine or lognormal parametrizations of the diffusion coefficients and corresponding 
Taylor, Jacobi, and Hermite expansions, to obtain fully discrete approximations. 
Our analysis yields convergence
rates of the fully discrete approximation in terms of the total number of degrees of freedom.  
The main vehicle consists in $\ell^p$ summability results for the coefficient sequences measured 
in higher-order Hilbertian Sobolev norms. 
We also discuss similar results for non-Hilbertian Sobolev 
norms which arise naturally when using adaptive spatial discretizations.
\end{abstract}

\section{Introduction}
\label{sec:Intro}

Parametric PDEs are of interest for modeling many complex phenomena,
where the involved parameters may be of deterministic or stochastic nature.
Their numerical treatment was initiated in the 1990s, see \cite{GS1,GS2,KL,X} 
for general references. 
It has recently drawn much attention in the case where the number of involved parameters
is very large \cite{BNTT1,BNTT2}, or {\it countably infinite} \cite{CD,CDS,CCS}. 

In this paper, we consider the elliptic diffusion equation
\be
-{\rm div}(a\nabla u)=f,
\label{ellip}
\ee
set on a given bounded Lipschitz domain $D\subset \R^m$ (say with $m=1,2$ or $3$), 
for some fixed right-hand side $f$, homogeneous Dirichlet boundary conditions
$u_{|\partial D}=0$, and spatially variable scalar diffusion coefficient $a$. 
Using the notation $V=H^1_0(D)$ and $V'=H^{-1}(D)$,
for any $f\in V'$, the weak formulation of \iref{ellip} in $V$,
\be
\int_D a\nabla u\cdot \nabla v=\langle f,v\rangle_{V',V}, \quad v\in H^1_0(D),
\ee
has a unique solution $u\in V$ whenever the diffusion coefficient $a$ satisfies $0<r<a<R<\infty$.

We consider diffusion coefficients having a parametrized form $a=a(y)$, where $y=(y_j)_{j\geq 1}$
is a sequence of real-valued parameters ranging in some set $U\subset \R^\N$. 
The resulting solution map
\be
u\mapsto u(y),
\label{solmap}
\ee
acts from $U$ to the solution space $V$. The objective is to achieve numerical 
approximation of this complex map by a small number of parameters with some
guaranteed error in a given norm. Two relevant types of parametrizations have
been the object of intensive study.

The first one is the so-called affine form
\be
a=a(y)= \bar a+\sum_{j\geq 1} y_j\psi_j,
\label{affine}
\ee
where $\bar a$ and $(\psi_j)_{j\geq 1}$ are given functions in $L^\infty(D)$,
and the parameters $y_j$ range in $[-1,1]$. 
The parameter domain is thus
\be
U=[-1,1]^\N
\ee
and the solution map is well-defined under the so-called {\it uniform ellipticity assumption}
\be
\sum_{j\geq 1}|\psi_j| \leq \bar a-r, \quad \mbox{a.e. on } D,
\label{uea}
\ee
for some $r>0$. An equivalent condition is that 
$\bar a\in L^\infty(D)$ is such that $\essinf \bar a>0$ and 
\be
\left \|\frac {\sum_{j\geq 1}|\psi_j| }{\bar a}Ê\right \|_{L^\infty} =\theta <1.
\label{uea1}
\ee
In this case, the approximation error is often measured in $L^\infty(U,V)$ or $L^2(U,V,\sigma)$
where $d\sigma$ is some product probability measure on $U$.

The second one is the so-called lognormal form
\be
a=\exp(b), \quad b=b(y)=\sum_{j\geq 1} y_j\psi_j,
\label{lognormal}
\ee
where the $y_j$ are i.i.d. standard Gaussian random variables. 
In this case, we have $U=\R^\N$ and the error is measured in $L^2(U,V,\gamma)$ where
\be
d\gamma(y):=\bigotimes_{j\geq 1} g(y_j)\,dy_j, \quad g(t):=\frac 1 {\sqrt{2\pi}} e^{-t^2/2}.
\label{gaussmeasure}
\ee

Note that the above countably infinite-dimensional setting comprises
its finite-dimensional counterpart
by setting $\psi_j=0$ for $j$ large enough.
In both affine and lognormal cases, a powerful strategy for the approximation of the 
solution map is based on the truncation of polynomial expansions
of the general form
\be
u(y)=\sum_{\nu\in\cF} u_\nu \,\phi_\nu(y).
\label{series}
\ee
Here $\cF$ is the set of finitely supported sequences of non-negative integers,
and 
\be
\phi_\nu(y)=\prod_{j\geq 1}\vp_{\nu_j}(y_j), \quad \nu=(\nu_j)_{j\geq 1},
\ee
where $(\vp_n)_{n\geq 0}$ is a family of univariate polynomials such that $\vp_0=1$ and ${\rm deg}(\vp_n)=n$.
The approximation strategy consists in defining
\be
u_n(y)=\sum_{\nu\in\Lambda_n} u_\nu \,\phi_\nu(y),
\label{truncate}
\ee
where $\Lambda_n$ is a selection of $n$ indices from $\cF$. If the
$\phi_\nu$ are normalized so that $|\phi_\nu(y)|\leq 1$ for all $y\in U$, then
the truncation error can be controlled in $L^\infty(U,V)$ by
\be
\|u-u_n\|_{L^\infty(U,V)} \leq \sum_{\nu\notin\Lambda_n}  \|u_\nu\|_V.
\label{superror}
\ee
If the $\phi_\nu$ constitute an orthonormal basis of $L^2(U)$ for some
given measure, then the truncation error in $L^2(U,V)$ for the same measure
is given by 
\be
\|u-u_n\|_{L^2(U,V)} =\left (\sum_{\nu\notin\Lambda_n}  \|u_\nu\|_V^2\right )^{1/2}.
\label{ell2error}
\ee
These estimates justify the use of best $n$-term trunctations, that is, 
taking $\Lambda_n$ to be the index set corresponding to the $n$ largest $\|u_\nu\|_V$.
With such a choice, classical results on best $n$-term approximation
in sequence spaces \cite{De} show that if $0<p<q$,
\be
(\|u_\nu\|_V)_{\nu\in\cF} \in \ell^p(\cF) \implies \left (\sum_{\nu\notin\Lambda_n}  \|u_\nu\|_V^q\right )^{1/q}\leq C n^{-s}, \quad s:=\frac 1 p-\frac 1 q,
\ee
where $C:=\|(\|u_\nu\|_V)_{\nu\in\cF}\|_{\ell^p}$. In particular, the tails in the right-hand sides of \iref{superror} and \iref{ell2error}
decrease like $n^{-s}$ where $s=\frac 1 p-1$ and $s=\frac 1 p- \frac 1 2$
respectively, provided that $(\|u_\nu\|_V)_{\nu\in\cF} \in \ell^p(\cF)$, with $p<1$ and $p<2$, respectively. A central objective is therefore
to establish $\ell^p$ summability results for the coefficient sequence in the given expansion, with $p$ as small as possible.

In the affine case, we consider two types of expansions. The first type are the Taylor (or power) series of the form
\be
\sum_{\nu\in\cF} t_\nu y^\nu, \quad t_{\nu}:=\frac 1 {\nu !}\partial^\nu u(y=0), \quad \nu!:=\prod_{j\geq 1} \nu_j!,
\label{taylor}
\ee
with the convention that $0!=1$. The second type are the orthogonal Jacobi series of the form
\be
\sum_{\nu\in\cF} v_\nu J_\nu(y), \quad J_\nu(y)=\prod_{j\geq 1}J_{\nu_j}(y_j),\quad v_\nu:=\int_U u(y)J_\nu(y) d\sigma(y),
\label{legendre}
\ee
where $(J_k)_{k\geq 0}$ is the sequence of Jacobi polynomials on $[-1,1]$ 
normalized with respect to the Jacobi probability measure
\be
\int_{-1}^1 |J_k(t)|^2 d_{\alpha,\beta}(t)dt =1,
\ee
where $d_{\alpha,\beta}(t)=c_{\alpha,\beta}(1-t)^\alpha(1+t)^\beta$ and $c_{\alpha,\beta}=(\int_{-1}^1(1-t)^\alpha(1+t)^\beta dt)^{-1}$. Therefore
$(J_\nu)_{\nu\in \cF}$ is an orthonormal basis of $L^2(U,\sigma)$, where
\be
d\sigma(y):=\bigotimes_{j\geq 1} d_{\alpha,\beta}(y_j)\,dy_j
\label{multijacobimeasure}
\ee
One particular example corresponding to the values $\alpha=\beta=0$ and $d_{0,0}(t)=\frac 1 2$ is the family 
of tensorized Legendre polynomials.

In the lognormal case, we consider Hermite series of the form
\be
\sum_{\nu\in\cF} w_\nu H_\nu(y), \quad H_\nu(y)=\prod_{j\geq 1}H_{\nu_j}(y_j),\quad w_\nu:=\int_U u(y)\,H_\nu(y)\, d\gamma (y),
\label{hermite}
\ee
with $(H_k)_{k\geq 0}$ being the sequence of Hermite polynomials normalized according to
\be
\int_{\R} | H_k(t)|^2\, g(t)\,dt=1,
\ee 
and $d\gamma$ given by \iref{gaussmeasure}. In this case $U=\R^\N$ and
$(H_\nu)_{\nu \in \cF}$ is an orthonormal basis of $L^2(U,\gamma)$.

In the affine case, the first $\ell^p$ summability results have been obtained in \cite{CDS} for the 
Taylor and Legendre coefficient sequences, under the conditions that \iref{uea} 
holds and $(\|\psi_j\|_{L^\infty})_{j\geq 1} \inÊ\ell^p(\N)$, for some $0<p<1$.
In the lognormal case, similar results have been first obtained in
\cite{HS} for the Hermite coefficients under the conditions that 
$(j\|\psi_j\|_{L^\infty})_{j\geq 1} \inÊ\ell^p(\N)$, for some $0<p\leq 1$.
Such results yield algebraic convergence rates $n^{-s}$ that are free
from the curse of dimensionality, in the sense that they hold for countably
many variables $y_j$. One of their intrinsic limitations is that the conditions on the
functions $\psi_j$ are expressed only through their $L^\infty$ norms.
In particular, they do not take into account their support properties, and
the possible gain in summability when these supports have limited overlap.

A different approach to summability results that takes into account the 
support properties has recently been proposed in \cite{BCM} for the affine case
and \cite{BCDM} for the lognormal case. This approach gives significant improvements
on the provable range of $\ell^p$ summability when the $\psi_j$ have disjoint supports, or limited overlap,
such as splines, finite elements or wavelet bases. The main results are the following
for the affine and lognormal case, respectively.

\begin{theorem}
\label{theotaylorV}
Let $0<q<\infty$ and $0<p<2$ be such that $\frac 1 p=\frac 1 q +\frac 1 2$. 
Assume that $\bar a\in L^\infty(D)$ is such that $\essinf \bar a>0$,
and that there exists a sequence $\rho=(\rho_j)_{j\geq 1}$ of 
numbers strictly larger than $1$ 
such that $(\rho_j^{-1})_{j\geq 1}\in \ell^q(\N)$ and such that 
\be
\theta 
:=
\left \| \frac{\sum_{j\geq 1} \rho_j|\psi_j|}{\bar a} \right \|_{L^\infty}
<1\;.
\label{uearho0}
\ee 
Then $(\|t_\nu\|_V)_{\nu\in \cF}$ and $(\|v_\nu\|_V)_{\nu\in \cF}$ belong to $\ell^p(\cF)$.
\end{theorem}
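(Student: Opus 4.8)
The plan is to reduce both assertions to a single weighted $\ell^2$ estimate and then invoke an elementary summability lemma. First I would prove: if a nonnegative sequence $(c_\nu)_{\nu\in\cF}$ satisfies $\sum_{\nu\in\cF}\rho^{2\nu}c_\nu^2<\infty$, with $\rho^\nu:=\prod_{j\ge1}\rho_j^{\nu_j}$, then $(c_\nu)_{\nu\in\cF}\in\ell^p(\cF)$ for the exponent with $\frac1p=\frac1q+\frac12$. This follows by writing $c_\nu=\rho^{-\nu}(\rho^\nu c_\nu)$ and applying H\"older's inequality with exponents $\frac2p$ and $\frac2{2-p}$:
\be
\sum_{\nu}c_\nu^p=\sum_\nu\rho^{-p\nu}(\rho^\nu c_\nu)^p\le\Bigl(\sum_\nu(\rho^\nu c_\nu)^2\Bigr)^{p/2}\Bigl(\sum_\nu\rho^{-\frac{2p}{2-p}\nu}\Bigr)^{(2-p)/2}.
\ee
The key point is the identity $\frac{2p}{2-p}=q$, so that $\sum_\nu\rho^{-q\nu}=\prod_{j\ge1}(1-\rho_j^{-q})^{-1}$ is finite precisely because $(\rho_j^{-1})_{j\ge1}\in\ell^q(\N)$. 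It then suffices to establish $\sum_\nu\rho^{2\nu}\|t_\nu\|_V^2<\infty$ and $\sum_\nu\rho^{2\nu}\|v_\nu\|_V^2<\infty$; note that \iref{uearho0} implies \iref{uea1} with the same $\theta$, so the solution map is well defined and $\|u(y)\|_V$ is bounded uniformly on $U$.

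For the Taylor coefficients I would exploit the triangular recursion obtained by differentiating the weak form. Since $a$ is affine, matching powers of $y$ gives, for $\nu\ne0$ and all $v\in V$, the identity $\int_D\bar a\,\nabla t_\nu\cdot\nabla v=-\sum_{j:\nu_j>0}\int_D\psi_j\,\nabla t_{\nu-e_j}\cdot\nabla v$. Testing with $v=t_\nu$, splitting $|\psi_j|=\bigl(\rho_j|\psi_j|\bigr)^{1/2}\bigl(\rho_j^{-1}|\psi_j|\bigr)^{1/2}$ pointwise, and using Cauchy--Schwarz together with $\sum_j\rho_j|\psi_j|\le\theta\bar a$ yields $\|t_\nu\|_{\bar a}^2\le\theta\sum_{j:\nu_j>0}\rho_j^{-1}\int_D|\psi_j|\,|\nabla t_{\nu-e_j}|^2$, where $\|w\|_{\bar a}^2:=\int_D\bar a|\nabla w|^2$ is equivalent to $\|w\|_V^2$. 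Multiplying by $\rho^{2\nu}$, summing over $\nu\ne0$, and reindexing $\nu\mapsto\nu-e_j$ (which produces a factor $\rho_j^2$) reduces the right-hand side, once more via $\sum_j\rho_j|\psi_j|\le\theta\bar a$, to $\theta^2\sum_\nu\rho^{2\nu}\|t_\nu\|_{\bar a}^2$. Writing $S:=\sum_\nu\rho^{2\nu}\|t_\nu\|_{\bar a}^2$ this gives $S-\|t_0\|_{\bar a}^2\le\theta^2 S$, hence $S\le(1-\theta^2)^{-1}\|t_0\|_{\bar a}^2<\infty$ because $\theta<1$.

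For the Jacobi coefficients the analogous recursion is \emph{not} triangular: the three-term recurrence $y_jJ_\nu=A_{\nu_j}J_{\nu+e_j}+B_{\nu_j}J_\nu+C_{\nu_j}J_{\nu-e_j}$ couples $v_\nu$ to $v_{\nu\pm e_j}$ in both directions. I would therefore argue globally in the Bochner space $L^2(U,V,\sigma)$. Let $T$ be the diagonal operator $u\mapsto\sum_\nu\rho^\nu v_\nu J_\nu$ and test the weak form $\int_U\int_D a\,\nabla u\cdot\nabla w\,d\sigma=\int_U\langle f,w\rangle\,d\sigma$ with $w=T^2u$ (rigorously on finite-dimensional truncations, passing to the limit via the uniform bounds). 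By orthonormality the $\bar a$-part reproduces $S:=\sum_\nu\rho^{2\nu}\|v_\nu\|_{\bar a}^2$, the right-hand side collapses to $\langle f,v_0\rangle$, and each perturbation term becomes $\int_D\psi_j\,\langle\widetilde M_j\mathbf w,\mathbf w\rangle\,dx$, where $\mathbf w=(\rho^\nu\nabla v_\nu)_\nu$ and $\widetilde M_j=TM_jT^{-1}$ is the conjugate of the (self-adjoint, norm-one) Jacobi multiplication matrix $M_j$. The decisive estimate is the operator-norm bound $\|\widetilde M_j\|\le\tfrac12\bigl(\rho_j+\rho_j^{-1}\bigr)\le\rho_j$, after which $\bigl|\sum_j\int_D\psi_j\langle\widetilde M_j\mathbf w,\mathbf w\rangle\bigr|\le\int_D\bigl(\sum_j\rho_j|\psi_j|\bigr)|\mathbf w|^2\le\theta S$, and the same absorption argument as in the Taylor case yields $S\le(1-\theta)^{-2}(\essinf\bar a)^{-1}\|f\|_{V'}^2<\infty$.

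I expect the Jacobi operator-norm bound to be the main obstacle. Conjugation by $T$ destroys self-adjointness, so the spectral radius $1$ of $M_j$ no longer controls the norm, and a naive Cauchy--Schwarz or Schur test only gives the weaker factor $\tfrac1{\sqrt3}\bigl(\rho_j+\rho_j^{-1}\bigr)$, which would force $\theta<\tfrac{\sqrt3}2$ rather than $\theta<1$. To reach the full range one must use the tridiagonal structure directly, decomposing $\widetilde M_j=\rho_jL+\rho_j^{-1}L^{*}$ for the weighted forward shift $L$ and comparing with the Chebyshev/Bernstein-ellipse model, for which $\max_{z\in\mathcal E_{\rho_j}}|z|=\tfrac12(\rho_j+\rho_j^{-1})$; checking that the genuine Jacobi recurrence coefficients do not spoil this bound, uniformly in $\alpha$ and $\beta$, is the delicate part. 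The remaining technical care concerns justifying that $T^2u$ is an admissible test function, which I would handle by truncating to finitely many active variables and to finite polynomial degree and then passing to the limit.
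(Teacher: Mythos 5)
Your H\"older reduction and your treatment of the Taylor coefficients are correct and essentially coincide with the paper's argument: the identity $\frac{2p}{2-p}=q$ and the factorization $\sum_\nu\rho^{-q\nu}=\prod_j(1-\rho_j^{-q})^{-1}$ are exactly \iref{holder}, and your recursion $\int_D\bar a\,\nabla t_\nu\cdot\nabla v=-\sum_{j\in\supp\nu}\int_D\psi_j\nabla t_{\nu-e_j}\cdot\nabla v$ tested with $t_\nu$ is the argument recalled from \cite{BCM} in \S 4.1. The only cosmetic difference is that the paper rescales $\psi_j\mapsto\rho_j\psi_j$ and runs a level-by-level recursion $D_n\leq\kappa D_{n-1}$, whereas you keep the weights explicit and absorb globally, getting the constant $(1-\theta^2)^{-1}$; both variants need a preliminary truncation to $|\nu|\leq N$ before absorbing (since $S$ is not known to be finite a priori), which is routine.

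The Jacobi part, however, has a genuine gap, and your ``decisive estimate'' is false as stated. Take Legendre ($\alpha=\beta=0$): the conjugated matrix $\widetilde M_j=TM_jT^{-1}$ sends $e_0\mapsto\rho_jA_0e_1$ with $A_0=1/\sqrt3$, so $\|\widetilde M_j\|\geq\rho_j/\sqrt3$, which exceeds $\tfrac12(\rho_j+\rho_j^{-1})$ as soon as $\rho_j\gtrsim 2.55$ --- and the hypothesis $(\rho_j^{-1})_{j\geq1}\in\ell^q(\N)$ forces $\rho_j\to\infty$ in all nontrivial cases. What you actually need is only $\|\widetilde M_j\|\leq\rho_j$; for Legendre this can be rescued by writing $\widetilde M_j=\rho_j^{-1}M_j+(\rho_j-\rho_j^{-1})U^*$ with $U$ the degree-raising weighted shift, so that $\|\widetilde M_j\|\leq\rho_j^{-1}\|M_j\|+(\rho_j-\rho_j^{-1})\|U\|\leq\rho_j^{-1}+(\rho_j-\rho_j^{-1})=\rho_j$, but for general $(\alpha,\beta)$ the nonzero diagonal part $B$ of the three-term recurrence contributes an additional $(1-\rho_j^{-1})\|B\|$ and the same triangle inequality no longer closes without quantitative control of the recurrence coefficients --- precisely the step you defer as ``the delicate part.'' The paper's proof (Theorem \ref{jacobiweightedl2}, whose argument applies verbatim with $V$ in place of $W^k$) never touches the recurrence: it uses Rodrigues' formula \iref{jacobirodrigues} and integration by parts to write $v_\nu=a_\nu\int_U\frac1{\nu!}\partial^\nu u(y)\prod_j 2^{-\nu_j}(1-y_j^2)^{\nu_j}\,d\sigma(y)$, recognizes the integrand as built from the Taylor coefficients at $z=0$ of the rescaled solution $w_y(z)=u(T_yz)$, $T_yz=(y_j+(1-|y_j|)\rho_jz_j)_{j\geq1}$, of an auxiliary affine problem whose data satisfy \iref{uearho0} uniformly in $y$, and then integrates the $y$-uniform Taylor bound over $U$. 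If you want to pursue your route, you must either prove $\|TM_jT^{-1}\|\leq\rho_j$ for all Jacobi recurrences with $\alpha,\beta>-1$ or switch to the paper's change-of-variables argument; as it stands the claim is unproved and its quantitative form is incorrect.
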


\begin{theorem}
\label{theohermiteV}
Let $0<q<\infty$ and $0<p<2$ be such that $\frac 1 p=\frac 1 q +\frac 1 2$. 
Assume that there exists a sequence $\rho=(\rho_j)_{j\geq 1}$ of positive numbers 
such that $(\rho_j^{-1})_{j\geq 1}\in \ell^q(\N)$ and such that 
\be
\left\| \sum_{j\geq 1} \rho_j |\psi_j|\right\|_{L^\infty}  <\infty\;.
\ee
Then the sequence $(\|w_\nu\|_V)_{\nu\in \cF}$ belongs to $\ell^p(\cF)$.
\end{theorem}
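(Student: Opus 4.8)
The plan is to follow the two-step pattern already available for the affine case in Theorem~\ref{theotaylorV}: first establish a \emph{weighted} $\ell^2$ estimate on the Hermite coefficients, and then convert it into the desired $\ell^p$ bound by a H\"older argument exploiting the hypothesis $\frac 1p=\frac1q+\frac12$. Concretely, I would introduce the product weights $\beta_\nu:=\rho^\nu\sqrt{\nu!}=\prod_{j\ge1}\rho_j^{\nu_j}\sqrt{\nu_j!}$ and aim to prove
\be
\sum_{\nu\in\cF}\beta_\nu^2\,\norm{w_\nu}_V^2=\sum_{\nu\in\cF}\rho^{2\nu}\,\nu!\,\norm{w_\nu}_V^2<\infty .
\ee
Granting this, writing $\norm{w_\nu}_V^p=(\beta_\nu\norm{w_\nu}_V)^p\,\beta_\nu^{-p}$ and applying H\"older with exponents $2/p$ and $2/(2-p)$ reduces matters to the summability $\sum_\nu\beta_\nu^{-q}<\infty$, since $p\cdot\frac{2}{2-p}=q$ exactly when $\frac1p=\frac1q+\frac12$ (and $p<2$ guarantees $2/p>1$). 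This last sum factorizes as $\prod_{j\ge1}\bigl(\sum_{n\ge0}\rho_j^{-qn}(n!)^{-q/2}\bigr)$; each factor is finite thanks to the $(n!)^{-q/2}$ gain coming from the Gaussian normalization of the $H_\nu$ (this is precisely why $\rho_j>0$, rather than $\rho_j>1$, already suffices), and the infinite product converges because its logarithm is dominated by $\sum_j\rho_j^{-q}<\infty$, i.e.\ by the assumption $(\rho_j^{-1})_{j\ge1}\in\ell^q(\N)$.

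The heart of the matter is therefore the weighted $\ell^2$ bound. The natural starting point is the Gaussian integration-by-parts identity stemming from the Rodrigues formula for the normalized Hermite polynomials, namely $w_\nu=\frac1{\sqrt{\nu!}}\,\E[\partial^\nu u]$, where $\E$ denotes expectation against $d\gamma$ and $\partial^\nu$ the mixed $y$-derivative. I stress that one \emph{cannot} simply estimate $\norm{\E[\partial^\nu u]}_V^2\le\E[\norm{\partial^\nu u}_V^2]$ and then invoke a pointwise bound: since $a=\exp(b)$ depends on $y$ only in a Gevrey, not analytic, fashion, the derivatives grow like $\norm{\partial^\nu u(y)}_V\sim\nu!$, so that $\sum_\nu\rho^{2\nu}\norm{\partial^\nu u(y)}_V^2$ diverges for a.e.\ fixed $y$. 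The decay of $\norm{w_\nu}_V$ genuinely originates in the cancellation carried by the Gaussian average $\E[\partial^\nu u]$ (equivalently, in the orthogonality of the $H_\nu$), and any successful argument must retain it rather than discard it through Jensen's inequality.

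To capture this cancellation I would work directly in the parametric variational formulation, testing $\int_U\int_D a\nabla u\cdot\nabla\phi\,H_\nu\,dx\,d\gamma=\langle f,\phi\rangle\,\delta_{\nu,0}$ with Hermite-weighted functions. Expanding $u=\sum_\mu w_\mu H_\mu$ and using that, via the Hermite three-term recurrence (equivalently, the generating-function identity for $\E[e^{ty}H_mH_n]$), multiplication by $a=\exp(\sum_j y_j\psi_j)$ couples the mode $\mu$ to the mode $\nu$ with a strength governed by $\prod_j\psi_j^{\,|\nu_j-\mu_j|}$ together with diagonal factors $\exp(\psi_j^2/2)$, one obtains a near-diagonal infinite linear system for the sequence $(w_\nu)$. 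The strategy is then to extract from this system a recursion bounding $\beta_\nu\norm{w_\nu}_V$ by a weighted combination of its neighbours $\beta_{\nu\pm e_j}\norm{w_{\nu\pm e_j}}_V$, and to close the recursion — summing over $\nu$ to produce the weighted $\ell^2$ bound above — using that the off-diagonal weights aggregate to $\bignorm{\sum_{j\ge1}\rho_j\abs{\psi_j}}_{L^\infty}<\infty$, so that each spatial point sees only a controlled total perturbation. I expect the main obstacle to be exactly this closing step in the absence of uniform ellipticity: because $\exp(b)$ is neither bounded above nor bounded below uniformly over $U=\R^\N$, the coercivity constant is genuinely $y$-dependent, and one must show that the $y$-dependent constants produced by the energy estimates — involving $\essinf_x b(\cdot,y)$ and the quadratic exponential factors above — are integrable against $d\gamma$. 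Controlling these Gaussian moments, uniformly in the spatial variable through the overlap condition, is the delicate quantitative core on which the whole estimate rests.
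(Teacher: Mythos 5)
There is a genuine gap, and it sits at the very center of your plan: the weighted $\ell^2$ estimate $\sum_{\nu\in\cF}\nu!\,\rho^{2\nu}\norm{w_\nu}_V^2<\infty$ that you propose as the first step is false in general for the lognormal solution map, for \emph{every} choice of positive weights $\rho_j$. Via the identity $\sqrt{\nu!}\,w_\nu=\int_U\partial^\nu u\,d\gamma$ this bound would force $\norm{w_\nu}_V$ to decay essentially like $\rho^{-\nu}/\sqrt{\nu!}$. But in each parametric direction $u(y)$ is, generically, holomorphic only in a strip: the continuation of $a=\exp(b)$ to complex $y_j$ loses sectoriality of the form once $\abs{\mathrm{Im}\,y_j}$ times the oscillation of $\psi_j$ reaches order $\pi$, and Hermite coefficients of functions analytic only in a strip decay merely like $e^{-c\sqrt{\nu_j}}$ per coordinate. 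Against such decay a factorial weight makes the sum diverge, and neither rescaling of $\rho$ nor "cancellation in the Gaussian average" can rescue it. (Your implicit sanity check, exponentials of linear functions of $y$, is misleading precisely because those are entire of exponential type; the solution map is not.) Consequently the second half of your plan --- closing a near-diagonal recursion for $\beta_\nu\norm{w_\nu}_V$ with $\beta_\nu=\rho^\nu\sqrt{\nu!}$ --- cannot succeed either; indeed the correct conclusion of the theorem is compatible with $\norm{w_\nu}_V$ decaying only polynomially in each $\nu_j$.

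The actual proof (from the cited source for this theorem; the same mechanism is displayed in \S 6 of the paper for the $W$-norm analogue) replaces your factorial weight by one that grows only polynomially. Fix an integer $r\geq1$ and set $b_\nu:=\sum_{\norm{\mu}_{\ell^\infty}\leq r}{\nu\choose\mu}\rho^{2\mu}$. Differentiating the Hermite expansion term by term ($\partial^\mu H_\nu=\sqrt{\nu!/(\nu-\mu)!}\,H_{\nu-\mu}$) and using Parseval gives the exact identity
\be
\sum_{\nu\in\cF}b_\nu\norm{w_\nu}_V^2
=\sum_{\norm{\mu}_{\ell^\infty}\leq r}\frac{\rho^{2\mu}}{\mu!}\int_U\norm{\partial^\mu u(y)}_V^2\,d\gamma(y),
\ee
so that one only ever needs derivatives of order at most $r$ in each coordinate, and only their $L^2(U,\gamma)$ norms; no cancellation in $\int_U\partial^\nu u\,d\gamma$ for large $\nu$ is invoked anywhere. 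The right-hand side is bounded by the shell-wise recursion $\sigma_k\leq\delta^k\sigma_0$ for $\sigma_k=\sum_{\abs{\mu}=k,\ \norm{\mu}_{\ell^\infty}\leq r}\frac{\rho^{2\mu}}{\mu!}\int_Da(y)\abs{\nabla\partial^\mu u(y)}^2$, valid pointwise in $y$ after rescaling $\rho$ so that $K=\bignorm{\sum_j\rho_j\abs{\psi_j}}_{L^\infty}$ is small relative to $r$; the $y$-dependent constants are then integrated using exactly the Gaussian moment bounds you anticipate. Your H\"older step survives with $\beta_\nu^2$ replaced by $b_\nu$: since $p/(2-p)=q/2$, one needs $\sum_\nu b_\nu^{-q/2}<\infty$, which factorizes over $j$ and converges under $(\rho_j^{-1})_{j\geq1}\in\ell^q(\N)$ \emph{provided} $rq>2$ --- this is why $r$ must be chosen depending on $p$, a degree of freedom entirely absent from your argument.
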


While the above results allow us to establish algebraic convergence rates
of the properly truncated series \iref{truncate} in terms of the number $n$
of retained coefficients, they do not yet yield practically realizable approximations.
Indeed, the coefficients $u_\nu\in \{t_\nu,v_\nu,w_\nu\}$ belong to the infinite
dimensional space $V$ and should themselves be approximated by means
of spatial discretizations. Such discretizations are typically performed 
by means of finite elements or wavelets,
and the number $n_\nu$ of allocated degrees of freedom may vary with
the retained index $\nu$.

In the present paper, we address this issue and establish several
results that describe the rate of convergence in terms of the total number
of degrees of freedom $N=\sum_{\nu\in\Lambda_n} n_\nu$. Our main vehicle
is to investigate the $\ell^p$ summability of the sequence $(\|u_\nu\|)_{\nu\in \cF}$
for the various coefficients $u_\nu\in \{t_\nu,v_\nu,w_\nu\}$, where $\|\cdot\|$ is a
norm associated to a higher-order smoothness class in $V$, for example the Sobolev
space $H^k(D)$ for some $k>1$. These new summability results are stated in \S 2,
and their implications on the convergence rate of spatial-parametric discretization
methods are discussed in \S 3 using finite element spaces for the spatial discretization. 
The proof of the convergence results is given in \S 4-5-6 for the Taylor, Jacobi and Hermite
coefficients, respectively. The smoothness classes considered in these results are
Hilbertian Sobolev spaces. In order to analyze the potential benefit of 
a fully adaptive space-parameter discretization, it is interesting to consider
also non-Hilbertian Sobolev or Besov classes which are known to 
govern the rate of nonlinear approximation methods such as adaptive 
finite elements or wavelets. Some first results in this direction
are given in \S 7. 

Finally, we illustrate in \S 8 the various results in this paper for a specific 
affine parametrization of the diffusion coefficient by a wavelet decomposition.
For this specific example, one main finding is that the convergence
rate in terms of $N$ for the space-parameter approximation
is typically higher when using nonlinear approximation both in the spatial
and parametric variable.

The approximation results obtained in this paper may be viewed
as a benchmark for the performance of concrete numerical strategies,
such as non-adaptive or adaptive space-parameter Galerkin methods.

\section{Summability results}
\label{sec:MainRes}

The results established in the further sections \S 4-5-6-7 of this paper give sufficient conditions
for $\ell^p$ summability of the sequences $(\|u_\nu\|_X)_{\nu\in \cF}$ where
$u_\nu\in \{t_\nu,v_\nu,w_\nu\}$ and $\|\cdot\|_X$ is a norm of a relevant smoothness
class $X$ of $V=H^1_0(D)$.

One first such class that appears in a natural manner is the space
\be\label{Wdef}
W:=\{v\in V\; : \; \Delta v\in L^2(D)\}.
\ee
This space is equipped with the norm
\be
\label{Wnorm}
\|v\|_W:=\|\Delta v\|_{L^2}.
\ee
Note that the above is a norm since $\Delta v=0$ implies $v=0$ when $v\in V$ due 
to the boundary condition. By elliptic regularity theory, it is also known that
$W$ coincides with the Sobolev space $V\cap H^2(D)$ with equivalent norms,
in the case where the domain $D$ is convex or of $C^{1,1}$ smoothness,
see \cite[Theorems 3.2.1.3 and 2.4.2.5]{Gr}. 
Our first result, established in \S 4.1, concerns the Taylor coefficients in the 
case of the affine parametrization \iref{affine}. Here, and in the rest of the paper,
we use the notation
\be
|\nabla \psi|= \Bigl(\sum_{i=1}^m |D_{x_i} \psi|^2 \Bigr)^{1/2},
\ee
for the Euclidean norm of the gradient.

\begin{theorem}
\label{theotaylorH2}
Let $0<q<\infty$ and $0<p<2$ be such that $\frac 1 p=\frac 1 q +\frac 1 2$. 
Assume that $\bar a\in L^\infty(D)$  is such that $\essinf \bar a>0$,
and that there exists a sequence $\rho=(\rho_j)_{j\geq 1}$ of 
numbers strictly larger than $1$ 
such that $(\rho_j^{-1})_{j\geq 1}\in \ell^q(\N)$ and such that 
\be
\theta := 
\left \| \frac{\sum_{j\geq 1} \rho_j|\psi_j|}{\bar a} \right \|_{L^\infty} 
< 1 \;.
\label{uearho}
\ee 
Assume in addition that the right side $f$ in \iref{ellip}
belongs to $L^2(D)$ and that $\bar a$ and all functions $\psi_j$
belong to $W^{1,\infty}(D)$ and that
\be
\left\| \sum_{j\geq 1} \rho_j |\nabla \psi_j| \right\|_{L^\infty} <\infty\;.
\label{sumgradrho}
\ee
Then the sequence $(\|t_\nu\|_W)_{\nu\in \cF}$ belongs to $\ell^p(\cF)$. In particular,
when $D$ is convex or of $C^{1,1}$ smoothness, the same holds for $(\|t_\nu\|_{H^2})_{\nu\in \cF}$.
\end{theorem}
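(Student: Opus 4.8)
The plan is to establish a \emph{weighted} $\ell^2$ bound for the Laplacians $\Delta t_\nu$ and then convert it to $\ell^p$ summability by a H\"older argument, mirroring the passage from a weighted $\ell^2$ estimate to Theorem~\ref{theotaylorV}. Write $\rho^\nu:=\prod_{j\ge1}\rho_j^{\nu_j}$ and $r:=\essinf\bar a>0$. First I would record the recursion satisfied by the Taylor coefficients: inserting $u(y)=\sum_\nu t_\nu y^\nu$ and the affine form \iref{affine} into \iref{ellip} and matching the coefficient of $y^\nu$ gives, with $f_0:=f$ and $f_\nu:=0$ for $\nu\neq0$, the relations $-\divergence(\bar a\nabla t_\nu)=f_\nu+\divergence\bigl(\sum_{j:\nu_j\ge1}\psi_j\nabla t_{\nu-e_j}\bigr)$. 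Since $\bar a,\psi_j\in W^{1,\infty}(D)$ and $\essinf\bar a>0$, an induction on $|\nu|$ shows that each $t_\nu$ lies in $W$ and that, expanding both divergences and solving pointwise for $\Delta t_\nu$, one has
\be
\bar a\,\Delta t_\nu=-\nabla\bar a\cdot\nabla t_\nu-f_\nu-\sum_{j:\nu_j\ge1}\nabla\psi_j\cdot\nabla t_{\nu-e_j}-\sum_{j:\nu_j\ge1}\psi_j\,\Delta t_{\nu-e_j}.
\ee
The essential feature is that $\Delta t_\nu$ is controlled by the \emph{lower-order} Laplacians $\Delta t_{\nu-e_j}$ with exactly the weights $\psi_j/\bar a$ appearing in \iref{uearho}, plus a source built only from the gradients $\nabla t_\mu$ and from $f$.

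Next I would set $S_\nu:=\rho^\nu\Delta t_\nu\in L^2(D)$ and rewrite the identity, after dividing by $\bar a$ and multiplying by $\rho^\nu$, as a fixed-point relation $(I+T)S=R$ in $\ell^2(\cF;L^2(D))$, where $(TS)_\nu:=\bar a^{-1}\sum_{j:\nu_j\ge1}\rho_j\psi_j\,S_{\nu-e_j}$ and $R_\nu:=\rho^\nu\bar a^{-1}(-\nabla\bar a\cdot\nabla t_\nu-f_\nu-\sum_{j:\nu_j\ge1}\nabla\psi_j\cdot\nabla t_{\nu-e_j})$. The key estimate is that $\|T\|\le\theta<1$: pointwise Cauchy--Schwarz in $j$ gives $\bar a^{-2}|\sum_j\rho_j\psi_j S_{\nu-e_j}|^2\le\theta\,\bar a^{-1}\sum_j\rho_j|\psi_j|\,|S_{\nu-e_j}|^2$, using \iref{uearho}; summing over $\nu$, integrating over $D$, reindexing $\nu\mapsto\nu-e_j$ for each $j$, and invoking \iref{uearho} once more yields $\|TS\|^2\le\theta^2\|S\|^2$. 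Hence $I+T$ is boundedly invertible with norm at most $(1-\theta)^{-1}$, so it remains to bound $\|R\|_{\ell^2(\cF;L^2)}$.

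Here I would use the weighted gradient estimate $\sum_{\nu\in\cF}(\rho^\nu\norm{\nabla t_\nu}_{L^2})^2<\infty$, which is the Hilbertian bound from which Theorem~\ref{theotaylorV} is derived and which follows from the same contraction argument applied to the energy identity for $t_\nu$ tested against itself. Granting it, the term $-\bar a^{-1}\rho^\nu\nabla\bar a\cdot\nabla t_\nu$ is bounded in $\ell^2(\cF;L^2)$ by $r^{-1}\norm{\nabla\bar a}_{L^\infty}(\sum_\nu(\rho^\nu\norm{\nabla t_\nu}_{L^2})^2)^{1/2}$; the data term contributes only at $\nu=0$ and is bounded by $r^{-1}\norm{f}_{L^2}$; and the term $-\bar a^{-1}\sum_j\rho_j\nabla\psi_j\cdot(\rho^{\nu-e_j}\nabla t_{\nu-e_j})$ is handled exactly like $T$, now with $\sum_j\rho_j|\nabla\psi_j|$ playing the role of $\sum_j\rho_j|\psi_j|/\bar a$. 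By assumption \iref{sumgradrho} this sum lies in $L^\infty(D)$, so Cauchy--Schwarz and reindexing bound that term by $r^{-1}\norm{\sum_j\rho_j|\nabla\psi_j|}_{L^\infty}(\sum_\nu(\rho^\nu\norm{\nabla t_\nu}_{L^2})^2)^{1/2}$. Thus $\|R\|<\infty$ and consequently $\sum_{\nu\in\cF}(\rho^\nu\norm{t_\nu}_W)^2<\infty$.

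Finally I would convert this weighted $\ell^2$ bound into $\ell^p$ summability. Writing $\norm{t_\nu}_W=(\rho^\nu\norm{t_\nu}_W)\,\rho^{-\nu}$ and applying H\"older with exponents $2/p$ and $2/(2-p)$ gives
\be
\sum_{\nu\in\cF}\norm{t_\nu}_W^p\le\Bigl(\sum_{\nu\in\cF}(\rho^\nu\norm{t_\nu}_W)^2\Bigr)^{p/2}\Bigl(\sum_{\nu\in\cF}\rho^{-q\nu}\Bigr)^{(2-p)/2},
\ee
where the exponent identity $\tfrac{2p}{2-p}=q$ follows from $\tfrac1p=\tfrac1q+\tfrac12$. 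Since $\rho_j>1$ and $(\rho_j^{-1})\in\ell^q(\N)$, the last factor equals $\prod_{j\ge1}(1-\rho_j^{-q})^{-(2-p)/2}<\infty$, so $(\norm{t_\nu}_W)_{\nu\in\cF}\in\ell^p(\cF)$; the $H^2$ statement then follows from the norm equivalence $W=V\cap H^2(D)$ on convex or $C^{1,1}$ domains quoted after \iref{Wnorm}. The main obstacle is the contraction estimate $\|T\|\le\theta$ (and its gradient analogue): it is what forces the pointwise weighted condition \iref{uearho} rather than a bound on the individual $\norm{\psi_j}_{L^\infty}$, the point being that the weight $\rho^\nu$ turns $\psi_j$ into $\rho_j\psi_j$ so that precisely the quantity $\theta<1$ reappears.
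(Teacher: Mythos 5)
Your proposal is correct and reaches the paper's intermediate result, the weighted bound $\sum_{\nu\in\cF}(\rho^\nu\|t_\nu\|_W)^2<\infty$ of Theorem \ref{theotaylorwH2}, followed by the same H\"older conversion; but the way you establish that weighted bound differs from the paper's. Both arguments start from the identical differentiated strong form \iref{DeltRec} and both feed in the weighted gradient estimate $\sum_\nu(\rho^\nu\|\nabla t_\nu\|_{L^2})^2<\infty$ underlying Theorem \ref{theotaylorV}. The paper then tests against $\Delta t_\nu$, uses Young's inequality with a small parameter $\e$ to absorb the diagonal terms, sums over $|\nu|=n$, and derives the level-wise recursion $C_n\leq \tau C_{n-1}+A(D_n+D_{n-1})$ with $\tau<1$, from which it extracts geometric decay $C_n\leq M\delta^n$. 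You instead recast the recursion globally as $(I+T)S=R$ on $\ell^2(\cF;L^2(D))$ and prove the sharp contraction bound $\|T\|\leq\theta$ by pointwise Cauchy--Schwarz against the weight $\sum_j\rho_j|\psi_j|/\bar a$. Your route is cleaner: it avoids the $\e$-juggling and uses \iref{uearho} at its exact strength. What the paper's version buys in exchange is the explicit geometric decay of $C_n=\sum_{|\nu|=n}c_\nu$, which is precisely the quantity bootstrapped in the induction on $k$ for the higher-order Theorem \ref{theotaylorwHm}; your global argument yields summability but not that level-wise decay. One step you should make explicit: bounded invertibility of $I+T$ on $\ell^2(\cF;L^2(D))$ gives $\|S\|\leq(1-\theta)^{-1}\|R\|$ only once you know that the coordinatewise solution $S_\nu=\rho^\nu\Delta t_\nu$ coincides with $(I+T)^{-1}R\in\ell^2$; this follows from the triangular structure of $T$ (it raises $|\nu|$ by one, so $(I+T)x=0$ coordinatewise forces $x=0$ by induction on $|\nu|$), or alternatively from a Neumann-series bound on the level truncations $\|P_nS\|\leq\|R\|+\theta\|P_{n-1}S\|$. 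This is a one-line fix, and the paper's own summation over the infinite sets $\{|\nu|=n\}$ requires an analogous implicit care, so it is not a substantive gap.
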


\begin{remark}
\label{remstringent} 
The first condition \iref{uearho} in the above theorem is the same as \iref{uearho0} in
Theorem \ref{theotaylorV}. However, the additional condition \iref{sumgradrho} puts further
constraints on the choice of the sequence $\rho$. For this reason, we expect that
the two values
$p_V:=\inf\{p>0\, : \: (\|t_\nu\|_V)_{\nu\in \cF}\in \ell^p(\cF)\}$
and $p_W:=\inf\{p>0\, : \: (\|t_\nu\|_W)_{\nu\in \cF}\in \ell^p(\cF)\}$
may in general differ in the sense that $p_V<p_W$. This remark also applies to
the next results dealing with higher-order smoothness and other types of polynomial expansions.
\end{remark}

Higher-order smoothness can be treated under more stringent conditions
on the smoothness of the functions $f$, $\bar a$, $\psi_j$ and of the domain $D$.
For any integer $k\geq 2$ we introduce the space
\be
W^k:=\{v\in V\; : \; \Delta v\in H^{k-2}(D)\}.
\ee
In particular $W^2=W$ defined above. This space is equipped with the norm
\be
\|v\|_{W^k}:=\|\Delta v\|_{H^{k-2}},
\ee
and coincides with the Sobolev space $V\cap H^k(D)$ with equivalent norms if the domain $D$ has $C^{k-1,1}$ smoothness,
see \cite[Theorem 2.5.1.1]{Gr}. The following result, established in \S 4.2, generalizes Theorem \ref{theotaylorH2}
to such spaces.

\begin{theorem}
\label{theotaylorHm}
Let $0<q<\infty$ and $0<p<2$ be such that $\frac 1 p=\frac 1 q +\frac 1 2$, and let $k> 2$ be an
integer.  Assume that $\bar a\in L^\infty(D)$  is such that $\essinf \bar a>0$,
and that there exists a sequence $\rho=(\rho_j)_{j\geq 1}$ of numbers strictly larger than $1$ 
such that $(\rho_j^{-1})_{j\geq 1}\in \ell^q(\N)$ and such that \iref{uearho}
holds. Assume in addition that the right side $f$ in \iref{ellip}
belongs to $H^{k-2}(D)$, that the domain $D$ has $C^{k-2,1}$ smoothness,
that $\bar a$ and all functions $\psi_j$ belong to $W^{k-1,\infty}(D)$ and that
\be
\sup_{|\alpha|\leq k-1} 
\left\| \sum_{j\geq 1} \rho_j |D^\alpha \psi_j| \right\|_{L^\infty} 
<\infty\;.
\label{sumderrho}
\ee
Then the sequence $(\|t_\nu\|_{W^k})_{\nu\in \cF}$ belongs to $\ell^p(\cF)$. 
In particular, when $D$ has $C^{k-1,1}$ smoothness, 
the same holds for $(\|t_\nu\|_{H^k})_{\nu\in \cF}$.
\end{theorem}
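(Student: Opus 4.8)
The plan is to reduce everything to the weighted square-summability
\[
S_k:=\sum_{\nu\in\cF}\rho^{2\nu}\,\|t_\nu\|_{W^k}^2<\infty,\qquad \rho^\nu:=\prod_{j\geq1}\rho_j^{\nu_j},
\]
and then to pass to $\ell^p$ exactly as for the lower-order norms: writing $\|t_\nu\|_{W^k}^p=\bigl(\rho^\nu\|t_\nu\|_{W^k}\bigr)^p\rho^{-p\nu}$ and applying H\"older with exponents $\tfrac2p$ and $\tfrac2{2-p}$, the remaining factor is $\sum_\nu\rho^{-q\nu}=\prod_j(1-\rho_j^{-q})^{-1}$, which converges because $(\rho_j^{-1})\in\ell^q(\N)$ and, by $\tfrac1p=\tfrac1q+\tfrac12$, one has $\tfrac{2p}{2-p}=q$. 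I would prove $S_k<\infty$ by induction on $k$, the case $k=2$ being Theorem~\ref{theotaylorH2}; since the hypotheses for $k$ imply those for every integer $2\le k'<k$, the inductive hypothesis supplies $S_{k'}<\infty$ for all such $k'$.

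The recursion is obtained by differentiating the weak form of \iref{ellip} at $y=0$: with $e_j$ the $j$-th coordinate vector, $-\divergence(\bar a\nabla t_\nu)=\sum_{j:\nu_j\ge1}\divergence(\psi_j\nabla t_{\nu-e_j})$ for $\nu\neq0$, which after expansion and division by $\bar a$ reads
\[
\Delta t_\nu=-\frac1{\bar a}\Bigl[\nabla\bar a\cdot\nabla t_\nu+\sum_{j:\nu_j\ge1}\bigl(\nabla\psi_j\cdot\nabla t_{\nu-e_j}+\psi_j\,\Delta t_{\nu-e_j}\bigr)\Bigr].
\]
Since $\|t_\nu\|_{W^k}=\|\Delta t_\nu\|_{H^{k-2}}$ and $\|\Delta t_\nu\|_{H^{k-3}}=\|t_\nu\|_{W^{k-1}}$, I would split the norm into the part already controlled by $S_{k-1}$ and the top seminorm $|\Delta t_\nu|_{H^{k-2}}$, and apply $D^\gamma$ with $|\gamma|=k-2$ to the displayed identity. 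By Leibniz the unique term carrying $k-2$ derivatives on $\Delta t_{\nu-e_j}$ is $-\tfrac{\psi_j}{\bar a}D^\gamma\Delta t_{\nu-e_j}$; every other term places at most $k-3$ derivatives on $\Delta t_{\nu-e_j}$ (hence is controlled by $\|t_{\nu-e_j}\|_{W^{k-1}}$) or acts on $t_\nu$ (controlled by $\|t_\nu\|_{W^{k-1}}$), the coefficient factors being bounded since $\bar a,\psi_j\in W^{k-1,\infty}(D)$ with $\essinf\bar a>0$, so that $\tfrac1{\bar a},\tfrac{\psi_j}{\bar a}\in W^{k-1,\infty}(D)$; the index $\nu=0$ is handled separately using $f\in H^{k-2}(D)$.

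Collecting the lower-order terms into a source $b_\nu$, the top-order recursion reads $\tau_\nu=-\sum_{j:\nu_j\ge1}\tfrac{\psi_j}{\bar a}\,\tau_{\nu-e_j}+b_\nu$ for the $L^2(D)$-valued sequences $\tau_\nu:=(D^\gamma\Delta t_\nu)_{|\gamma|=k-2}$, the multiplier $\psi_j/\bar a$ acting diagonally. Passing to $\tilde\tau_\nu:=\rho^\nu\tau_\nu$, the weighted shift $\tilde\tau\mapsto\bigl(\sum_j\rho_j\tfrac{\psi_j}{\bar a}\tilde\tau_{\nu-e_j}\bigr)_\nu$ has norm at most $\theta<1$ on $\ell^2(\cF;L^2(D))$: a pointwise Cauchy--Schwarz and the uniform bound $\sum_j\rho_j\tfrac{|\psi_j|}{\bar a}\le\theta$ from \iref{uearho} give $\int_D\bigl|\sum_j\rho_j\tfrac{\psi_j}{\bar a}\tilde\tau_{\nu-e_j}\bigr|^2\le\theta\int_D\sum_j\rho_j\tfrac{|\psi_j|}{\bar a}|\tilde\tau_{\nu-e_j}|^2$, and summation over $\nu$ with a second use of the same bound yields $\theta^2\|\tilde\tau\|^2$. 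As $\theta<1$ the recursive term is absorbed, so $\|\tilde\tau\|\le(1-\theta)^{-1}\|(\rho^\nu b_\nu)_\nu\|_{\ell^2(\cF;L^2(D))}$. The source has finite weighted $\ell^2$ norm by the inductive hypothesis $S_{k-1}<\infty$ together with \iref{sumderrho}, which bounds $\sum_j\rho_j|D^\alpha\psi_j|$ for all $|\alpha|\le k-1$ (whence also $\sum_j\rho_j|D^\alpha(\psi_j/\bar a)|$): each shifted source term carries a factor $\rho_j$ that pairs with a derivative of $\psi_j/\bar a$, reducing its contribution to $S_{k-1}$. Hence $S_k<\infty$, and the $H^k$ statement follows by elliptic regularity once $D$ is $C^{k-1,1}$.

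The main obstacle is precisely this absorption. One must verify that, among the many Leibniz terms produced by $D^\gamma$, exactly one retains the full $W^k$-norm at the neighbouring index $\nu-e_j$ with multiplier $\psi_j/\bar a$, and that the contraction constant is genuinely $\theta$ rather than a larger quantity. This forces one to \emph{keep the spatial integration throughout} and to invoke the pointwise inequality $\sum_j\rho_j|\psi_j|/\bar a\le\theta$: passing prematurely to $\|\psi_j/\bar a\|_{L^\infty}$ would only give $\sum_j\rho_j\|\psi_j/\bar a\|_{L^\infty}\ge\theta$, which need not be $<1$. The remaining work, checking that every lower-order Leibniz term has finite weighted $\ell^2$ norm under \iref{sumderrho} and the induction hypothesis, is routine but lengthy bookkeeping.
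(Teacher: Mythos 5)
Your argument is correct and rests on the same pillars as the paper's proof: reduce $\ell^p$ summability to the weighted $\ell^2$ bound via the same H\"older factorization, induct on $k$ with Theorem \ref{theotaylorH2} as base case, differentiate the strong form of the equation in $y$, and absorb the single top-order neighbour term using the pointwise bound $\sum_j\rho_j|\psi_j|/\bar a\leq\theta<1$ from \iref{uearho} (your closing remark about why one must not pass prematurely to $L^\infty$ norms of the individual $\psi_j$ is exactly the right point). The organization differs in two ways from Theorem \ref{theotaylorwHm}. First, the paper does not divide by $\bar a$; it integrates the differentiated strong form \iref{strongalpha} against $\Delta D^\alpha t_\nu$ and uses Young's inequality with a small parameter $\e$, so that the diagonal terms $\nabla\bar a\cdot\nabla t_\nu$ and $\nabla\psi_j\cdot\nabla t_{\nu-e_j}$ contribute $\e B\,C_n^\alpha$ and must also be absorbed; your division by $\bar a$ pushes every non-top-order term into the source, so only the shift by $\psi_j/\bar a$ needs the contraction. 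Second, the paper works level by level in $|\nu|=n$ and its induction hypothesis is the \emph{geometric decay} $C_n^\alpha\leq M_{|\alpha|}\kappa_{|\alpha|}^n$, which it needs because the source at level $n$ must itself decay geometrically for the recursion $C_n^\alpha\leq\kappa C_{n-1}^\alpha+(\dots)\kappa_{k-1}^n$ to close; your global operator-norm argument on $\ell^2(\cF;L^2(D))$ only requires the weaker hypothesis $S_{k-1}<\infty$, which is a genuine simplification of the bookkeeping. Both arguments use the same elliptic-regularity input: to control the source terms carrying up to $k-1$ derivatives of $t_\nu$ or $t_{\nu-e_j}$ by $\|\cdot\|_{W^{k-1}}=\|\Delta\cdot\|_{H^{k-3}}$ you must invoke $\|v\|_{H^{k-1}}\lesssim\|\Delta v\|_{H^{k-3}}$ on a $C^{k-2,1}$ domain, exactly as in \iref{induct}; you leave this implicit and should state it.

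One step to make rigorous: the absorption $\|\tilde\tau\|\leq\theta\|\tilde\tau\|+\|\tilde b\|\implies\|\tilde\tau\|\leq(1-\theta)^{-1}\|\tilde b\|$ is only valid once you know $\|\tilde\tau\|_{\ell^2(\cF;L^2)}<\infty$ a priori, which is precisely what you are trying to prove. This is not fatal: the shift $\tilde\tau\mapsto(\sum_j\rho_j(\psi_j/\bar a)\tilde\tau_{\nu-e_j})_\nu$ is strictly triangular with respect to $|\nu|$ and support of $\nu$, so one restricts to the finite index sets $\{\nu:|\nu|\leq n,\ \supp(\nu)\subset\{1,\dots,J\}\}$, on which all quantities are finite and the same estimate holds with constants independent of $n$ and $J$, and then lets $n,J\to\infty$. (The paper's derivation of \iref{eq:DnBd} silently performs the analogous subtraction of $\frac\theta2 D_n$ and needs the same truncation to be fully rigorous, so you are in good company, but since your write-up leans on the Neumann-type inversion more explicitly, the caveat deserves a sentence.) With that addition, and the routine verification that $1/\bar a$ and $\psi_j/\bar a$ inherit the required $W^{k-1,\infty}$ bounds and weighted summability from \iref{sumderrho} via the Leibniz rule, your proof is complete.
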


We also present in \S 4.3 some variants of these results
using fractional Sobolev spaces and
weighted Sobolev spaces for polygonal domains. 
In the case of Jacobi expansions, the following analogous results are
established in \S 5 under the exact same assumptions.

\begin{theorem}
\label{theojacobiHm}
The sequence $(\|v_\nu\|_{W^k})_{\nu\in \cF}$ belongs to $\ell^p(\cF)$,
under the same assumptions as those of Theorem \ref{theotaylorH2}
when $k=2$ and of Theorem \ref{theotaylorHm} when $k>2$.
\end{theorem}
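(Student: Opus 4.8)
The plan is to run the affine Jacobi argument underlying Theorem~\ref{theotaylorV} with the energy space $V$ replaced throughout by $W^k$, the only genuinely new ingredient being a uniform $W^k$ a priori bound on a holomorphic extension of the solution map. The exponent relation $\frac1p=\frac1q+\frac12$ already signals the mechanism: rather than a pointwise Cauchy bound $\|v_\nu\|_{W^k}\lesssim\rho^{-\nu}$ (which would only give $\ell^q$), I would establish a weighted square-summability estimate $\sum_{\nu\in\cF}\rho^{2\nu}\|v_\nu\|_{W^k}^2<\infty$, with $\rho^\nu:=\prod_{j\ge1}\rho_j^{\nu_j}$, and then convert it into $\ell^p$ summability by Hölder's inequality.

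First I would complexify the problem. Writing $a(z)=\bar a+\sum_{j\ge1}z_j\psi_j$ for complex parameters $z=(z_j)$, condition \iref{uearho} guarantees that whenever each $z_j$ ranges over the Bernstein ellipse of parameter $\rho_j$ about $[-1,1]$ (so $|z_j|\le\rho_j$), one has $\sum_j|z_j\psi_j|\le\sum_j\rho_j|\psi_j|\le\theta\bar a$, whence $\mathrm{Re}\,a(z)\ge(1-\theta)\,\essinf\bar a>0$; thus the complex elliptic problem is well posed and $z\mapsto u(z)$ is holomorphic with values in $V$ on the resulting polyellipse $\mathcal{E}_\rho$. The key step is to promote this to $W^k$-valued holomorphy together with a uniform bound $\sup_{z\in\mathcal{E}_\rho}\|u(z)\|_{W^k}<\infty$. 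This is exactly the elliptic-regularity bootstrap carried out in the proof of Theorem~\ref{theotaylorHm}: from $a(z)\Delta u(z)=-f-\nabla a(z)\cdot\nabla u(z)$ one estimates $\|\Delta u(z)\|_{H^{k-2}}$ in terms of lower-order norms of $u(z)$ and the spatial derivatives of $a(z)$, and the hypotheses $f\in H^{k-2}(D)$, $D\in C^{k-2,1}$, $\bar a,\psi_j\in W^{k-1,\infty}(D)$ together with the derivative bound (\iref{sumgradrho} when $k=2$, \iref{sumderrho} when $k>2$) make all the arising constants uniform over $\mathcal{E}_\rho$; the uniform lower bound on $\mathrm{Re}\,a(z)$ and the uniform control of the $D^\alpha a(z)$, $|\alpha|\le k-1$, are precisely what \iref{uearho} and \iref{sumderrho} supply.

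With the $W^k$-valued holomorphic extension and its uniform bound in hand, I would estimate the Jacobi coefficients $v_\nu=\int_U u(y)J_\nu(y)\,d\sigma(y)$ coordinate by coordinate. Univariate Jacobi coefficients of a function holomorphic and bounded on a Bernstein ellipse of parameter $\rho_j$ decay geometrically like $\rho_j^{-\nu_j}$, uniformly in $\alpha,\beta$; applying a Parseval-type estimate on $\mathcal{E}_\rho$ and choosing, as is permitted by the openness of \iref{uearho}, the holomorphy parameters slightly larger than the weights $\rho_j$, yields the weighted bound $\sum_\nu\rho^{2\nu}\|v_\nu\|_{W^k}^2\le C$. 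Finally, writing $\|v_\nu\|_{W^k}=(\rho^\nu\|v_\nu\|_{W^k})\,\rho^{-\nu}$ and applying Hölder with exponents $2/p$ and $2/(2-p)$ gives $\sum_\nu\|v_\nu\|_{W^k}^p\le C^{p/2}\bigl(\sum_\nu\rho^{-q\nu}\bigr)^{(2-p)/2}$, and since $\sum_{\nu\in\cF}\rho^{-q\nu}=\prod_{j}(1-\rho_j^{-q})^{-1}$ converges precisely because $\rho_j>1$ and $(\rho_j^{-1})_{j}\in\ell^q(\N)$, I conclude that $(\|v_\nu\|_{W^k})_{\nu\in\cF}\in\ell^p(\cF)$.

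I expect the main obstacle to be the uniform $W^k$ bound on the complex extension rather than the combinatorics: one must carry the full elliptic-regularity chain through for the non-self-adjoint complex coefficient $a(z)$ while keeping every constant independent of $z\in\mathcal{E}_\rho$, which is where the support structure of the $\psi_j$ (hidden in the admissibility of large $\rho_j$ under \iref{uearho} and \iref{sumderrho}) is essential. The passage from the coordinatewise coefficient decay to the weighted $\ell^2$ estimate, and the concluding Hölder step, are then routine given the corresponding facts already used for the $V$-norm result of Theorem~\ref{theotaylorV}.
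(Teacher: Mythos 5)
Your architecture (weighted $\ell^2$ estimate followed by H\"older) is the right one, and both your complexification step and your final H\"older step are sound as far as they go. The gap is in the middle: the passage from a uniform bound $\sup_{z\in\mathcal{E}_{\rho'}}\|u(z)\|_{W^k}\leq M$ on a polyellipse to the weighted estimate $\sum_{\nu\in\cF}\rho^{2\nu}\|v_\nu\|_{W^k}^2<\infty$ is not valid when $\rho'_j=(1+\e)\rho_j$, and there is no ``Parseval-type estimate'' that rescues it. Contour or Rodrigues-type bounds give only pointwise estimates $\|v_\nu\|_{W^k}\leq M\prod_j P(\nu_j)(\rho'_j)^{-\nu_j}$ with a per-coordinate constant that does not tend to zero; inserting these into the weighted sum produces $M^2\prod_{j\geq 1}\bigl(1+\sum_{k\geq 1}P(k)^2(1+\e)^{-2k}\bigr)$, a divergent infinite product because each factor exceeds $1$ by an amount independent of $j$. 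The failure is not an artifact of crude estimates. Even in the Chebyshev case, where an exact Parseval identity on Bernstein ellipses is available via the Joukowski map, the function $u(z)=c\prod_{j=1}^d z_j$ satisfies $\sup_{\mathcal{E}_\rho}|u|\approx c\prod_{j\leq d}(\rho_j/2)$ while its single nonzero coefficient gives $\rho^{2\nu}|u_\nu|^2\approx 4^d\sup_{\mathcal{E}_\rho}|u|^2$; so uniform boundedness on $\mathcal{E}_\rho$ is compatible with $\sum_\nu \rho^{2\nu}\|v_\nu\|^2=\infty$ in the infinite-dimensional setting. Absorbing the factor $4^{\#\supp\nu}$ would require holomorphy parameters $\rho'_j\geq 2\rho_j$, and \iref{uearho} only tolerates $\rho'_j=(1+\e)\rho_j$ with $(1+\e)\theta<1$. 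This is exactly the dividing line you yourself identify in your first paragraph: uniform bounds on polydiscs or polyellipses yield pointwise (Cauchy-type) coefficient bounds and hence only $\ell^q$ summability; the improvement to $\ell^p$ with $\frac1p=\frac1q+\frac12$ requires the weighted $\ell^2$ information, which holomorphy alone does not supply.

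The paper obtains the weighted $\ell^2$ bound by an entirely real-variable mechanism (Theorem \ref{jacobiweightedl2}). For each $y\in U$ one sets $T_yz:=\bigl(y_j+(1-|y_j|)\rho_j z_j\bigr)_{j\geq 1}$ and $w_y(z):=u(T_yz)$, checks that this shifted affine problem satisfies the hypotheses of Theorems \ref{theotaylorwH2} and \ref{theotaylorwHm} uniformly in $y$, and concludes that $\sum_{\nu}\|\frac1{\nu!}\partial^\nu w_y(0)\|_{W^k}^2\leq C$ uniformly in $y$ --- the square summability here coming from the recursive level-by-level energy estimates $C_n\leq\tau C_{n-1}+\dots$, not from analyticity. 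Rodrigues' formula and integration by parts then convert this into $\sum_\nu a_\nu^{-2}\rho^{2\nu}\|v_\nu\|_{W^k}^2\leq C$, with the normalization factors $a_\nu$ absorbed in the H\"older step. If you wish to retain your framing, the part you flag as the main obstacle (the uniform complex $W^k$ bound) is in fact neither the hard part nor sufficient; what must be supplied is the recursive weighted $\ell^2$ argument for the Taylor coefficients of the shifted problems $w_y$, which is precisely the content of Theorems \ref{theotaylorwH2} and \ref{theotaylorwHm}.
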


Finally, in the case of the lognormal parametrization
\iref{lognormal}, we establish in \S 6 the following analog to Theorem \ref{theotaylorH2}.

\begin{theorem}
\label{theohermiteH2}
Let $0<q<\infty$ and $0<p<2$ be such that $\frac 1 p=\frac 1 q +\frac 1 2$. 
Assume that the right side $f$ in \iref{ellip}
belongs to $L^2(D)$ and that $\bar a$ and all functions $\psi_j$
belong to $W^{1,\infty}(D)$. Assume in
addition that there exists a sequence $\rho=(\rho_j)_{j\geq 1}$ of positive numbers 
such that $(\rho_j^{-1})_{j\geq 1}\in \ell^q(\N)$ and such that 
\be
\left\| \sum_{j\geq 1} \rho_j |\psi_j|\right\|_{L^\infty} 
+
\left\| \sum_{j\geq 1} \rho_j |\nabla \psi_j|\right\|_{L^\infty} <\infty.
\label{condhermite}
\ee
Then $u\in L^k(U,W,\gamma)$ for all $k>0$ and
the sequence $(\|w_\nu\|_W)_{\nu\in \cF}$ belongs to $\ell^p(\cF)$. In particular,
when $D$ is convex or of $C^{1,1}$ smoothness, the same holds for $(\|w_\nu\|_{H^2})_{\nu\in \cF}$.
\end{theorem}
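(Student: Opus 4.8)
The plan is to follow the proof of Theorem~\ref{theohermiteV} one spatial derivative higher, after two preliminary reductions. Writing $b=b(y)=\sum_{j\ge1}y_j\psi_j$, so that $a=e^{b}$, the weak form of \iref{ellip} gives, for a.e.\ $y$, the pointwise identity
\be
\Delta u(y)=-e^{-b(y)}f-\nabla b(y)\cdot\nabla u(y)\quad\text{in }L^2(D).
\label{plan-deltau}
\ee
Combined with the energy bound $\|\nabla u(y)\|_{L^2}\le e^{\|b(y)\|_{L^\infty}}\|f\|_{V'}$ and the per-mode estimates $\|\psi_j\|_{L^\infty},\|\nabla\psi_j\|_{L^\infty}\le C\rho_j^{-1}$ read off from \iref{condhermite}, this yields
\be
\|u(y)\|_W=\|\Delta u(y)\|_{L^2}\le C\,e^{\|b(y)\|_{L^\infty}}\bigl(1+\|\nabla b(y)\|_{L^\infty}\bigr)\|f\|_{L^2}.
\label{plan-wbound}
\ee
Since $\|b(y)\|_{L^\infty}$ and $\|\nabla b(y)\|_{L^\infty}$ are measurable norms of Gaussian random fields, standard Gaussian integrability (Fernique's theorem, which gives $\E[e^{\delta\|b\|_{L^\infty}^2}]<\infty$ and hence $\E[e^{k\|b\|_{L^\infty}}]<\infty$ for every $k$) makes the right-hand side of \iref{plan-wbound} belong to $L^k(U,\gamma)$ for all $k>0$; this establishes $u\in L^k(U,W,\gamma)$.

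Because $u\in L^2(U,W,\gamma)$ and $\Delta\colon W\to L^2(D)$ is bounded with $\|\cdot\|_W=\|\Delta\cdot\|_{L^2}$, the Laplacian commutes with the Bochner integral defining the Hermite coefficients, so
\be
\Delta w_\nu=\int_U (\Delta u)(y)\,H_\nu(y)\,d\gamma(y),\qquad \|w_\nu\|_W=\bigl\|(\Delta u)_\nu\bigr\|_{L^2(D)},
\label{plan-commute}
\ee
where $(\Delta u)_\nu$ is the $\nu$-th Hermite coefficient of $\Delta u\in L^2(U,L^2(D),\gamma)$. Thus proving $(\|w_\nu\|_W)_{\nu\in\cF}\in\ell^p(\cF)$ is the same as proving $\ell^p$ summability for the $L^2(D)$-norms of the Hermite coefficients of $\Delta u$. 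Exactly as in \iref{ell2error} and the passage to best $n$-term rates, I would deduce this from a weighted $\ell^2$ estimate
\be
\sum_{\nu\in\cF}\sigma_\nu^2\,\|w_\nu\|_W^2<\infty,
\label{plan-weighted}
\ee
using the elementary Hölder inequality built on $\tfrac1p=\tfrac1q+\tfrac12$, with $\sigma_\nu$ taken from the same family of product weights as in Theorem~\ref{theohermiteV}, for which $(\sigma_\nu^{-1})_{\nu\in\cF}\in\ell^q(\cF)$ is already guaranteed by $(\rho_j^{-1})_{j\ge1}\in\ell^q(\N)$.

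The heart of the matter is \iref{plan-weighted}, which I would obtain by re-running the coupled recursion on the Hermite coefficients that underlies Theorem~\ref{theohermiteV}, now carrying the second-order term of \iref{plan-deltau} alongside the gradient; this simultaneously controls the $V$- and $W$-level weighted sums. The contribution of $e^{-b}f$ is handled as the source term in the $V$-analysis, since $f\in L^2(D)$ and $e^{-b}$ is controlled by the pointwise ellipticity already in play there. The genuinely new contribution is the product $\nabla b\cdot\nabla u=\sum_{j}y_j\,\nabla\psi_j\cdot\nabla u$: applying the three-term recurrence $y_jH_\nu=\sqrt{\nu_j+1}\,H_{\nu+e_j}+\sqrt{\nu_j}\,H_{\nu-e_j}$ to the coefficients of $\nabla u$ (for which $(\nabla u)_\mu=\nabla w_\mu$ and $\|\nabla w_\mu\|_{L^2}=\|w_\mu\|_V$) gives
\be
\bigl\|(\nabla b\cdot\nabla u)_\nu\bigr\|_{L^2}\le\sum_{j\ge1}\|\nabla\psi_j\|_{L^\infty}\Bigl(\sqrt{\nu_j+1}\,\|w_{\nu+e_j}\|_V+\sqrt{\nu_j}\,\|w_{\nu-e_j}\|_V\Bigr).
\label{plan-product}
\ee
The main obstacle is to show that this operation maps the weighted coefficient space boundedly into itself, so that $\sum_\nu\sigma_\nu^2\|(\nabla b\cdot\nabla u)_\nu\|_{L^2}^2$ is dominated by the $V$-level weighted sum $\sum_\mu\sigma_\mu^2\|w_\mu\|_V^2$ produced by the recursion; this is precisely where the gradient part of \iref{condhermite}, namely $\bigl\|\sum_{j}\rho_j|\nabla\psi_j|\bigr\|_{L^\infty}<\infty$, enters, through $\rho_j\|\nabla\psi_j\|_{L^\infty}\le C$, to absorb the index shifts in \iref{plan-product} against the $\rho_j$-growth of the weights while keeping the sum over $j$ finite. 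Since the recursion couples the $W$-sum to this product term and to lower-order $W$-terms, one must also check that the resulting triangular estimate closes with a finite constant, which rests, as in the $V$-case, on the smallness encoded in the $\ell^q$-summability of $(\sigma_\nu^{-1})$. The delicate point pervading the whole argument is that the Gaussian cancellation in $w_\nu=\tfrac1{\sqrt{\nu!}}\int_U\partial^\nu u\,d\gamma$ must be retained at the level of the coefficients themselves: bounding instead $\|\partial^\nu u\|$ pointwise would cost a factor $\nu!$, since the parametric derivatives grow factorially, and would destroy \iref{plan-weighted}.
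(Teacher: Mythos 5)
Your opening step is sound and matches the paper: the strong form $\Delta u(y)=-e^{-b(y)}f-\nabla b(y)\cdot\nabla u(y)$ together with Gaussian integrability of $\exp(k\|b\|_{L^\infty})$ and $\exp(k\|\nabla b\|_{L^\infty})$ gives $u\in L^k(U,W,\gamma)$ for all $k$, exactly as in \eqref{boundedmoments}. The core of your argument, however, has two genuine gaps. First, in the lognormal setting the weights for the H\"older step cannot be product weights $\sigma_\nu=\rho^\nu$: the hypothesis only requires $(\rho_j^{-1})_{j\geq 1}\in\ell^q(\N)$ with $\rho_j>0$, not $\rho_j>1$, so $(\rho^{-q\nu})_{\nu\in\cF}$ is in general not summable (already a single $\rho_j\le 1$ makes $\sum_{n}\rho_j^{-qn}$ diverge). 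The paper instead works with the non-product weights $b_\nu=\sum_{\|\mu\|_{\ell^\infty}\leq r}\binom{\nu}{\mu}\rho^{2\mu}$, whose $\ell^q$-type properties require choosing $r$ depending on $p$; this is not a cosmetic difference but the reason the whole argument is organized around the identity \eqref{sumidentity}.

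Second, your proposed bound for the Hermite coefficients of $\nabla b\cdot\nabla u$ via the three-term recurrence does not close. The factors $\sqrt{\nu_j}$ and $\sqrt{\nu_j+1}$ appearing there are unbounded in $\nu$, and for any product weight $\sigma_\nu=\prod_j\sigma_j^{\nu_j}$ the ratio $\sigma_{\nu}^2/\sigma_{\nu\mp e_j}^2$ is merely bounded, so after summation one needs a uniform bound on quantities of the form $\sum_j\|\nabla\psi_j\|_{L^\infty}\,(\nu_j+1)\,\sigma_j^{2}\lambda_j^{-1}$ over all $\nu\in\cF$, which fails since $\nu_j$ is arbitrary; no choice of auxiliary weights $\lambda_j$ compatible with \eqref{condhermite} absorbs this growth. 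This is precisely the obstruction that forces the paper's different route: rather than recursing on the Hermite coefficients, it uses \eqref{sumidentity} to convert the weighted coefficient sum into $\sum_{\|\mu\|_{\ell^\infty}\leq r}\frac{\rho^{2\mu}}{\mu!}\int_U\|\partial^\mu u(y)\|_W^2\,d\gamma(y)$, and then runs a \emph{triangular} recursion in the parametric derivatives $\partial^\mu u(y)$ pointwise in $y$ (Theorem \ref{hermiteapproxauxiliary}), where only derivatives of order at most $r$ per coordinate occur and the $1/\mu!$ weights tame the growth; the result is then integrated in $y$ using the moment bounds. Your closing worry about losing the "Gaussian cancellation" by bounding $\partial^\nu u$ pointwise is thus resolved not by avoiding pointwise derivative bounds but by restricting to $\|\mu\|_{\ell^\infty}\leq r$ with the factorial weights. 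To repair your proof you would need to import the identity \eqref{sumidentity} and the combinatorial machinery of \cite{BCDM}, at which point you are following the paper's argument.
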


The proof of this result is given in \S 6.
For higher-order smoothness, an analogous result seems to be valid, namely:
under the assumption that the domain $D$ has $C^{k-2,1}$ smoothness,
that $\bar a$ and all functions $\psi_j$
belong to $W^{k-1,\infty}(D)$ and that \iref{sumderrho} holds,
then the sequence $(\|w_\nu\|_{W^k})_{\nu\in \cF}$ belongs to $\ell^p(\cF)$. 
However, the proof of this result seems to require heavy technical 
and notational developments, and we therefore do not attempt to include it in this paper.

\section{Space-parameter approximation}
\label{sec:SpcParDiscr}

We recall that polynomial approximations are typically obtained
by truncation of the general series \iref{series} according to
\be
u_n:=\sum_{\nu\in\Lambda_n} u_\nu \, \phi_\nu.
\label{nterm}
\ee
When taking for $\Lambda_n$ a set corresponding to the $n$ largest $\|u_\nu\|_V$,
the rate of convergence of such approximations
is governed by the $\ell^p$ summability of the $(\|u_\nu\|_V)_{\nu\in \cF}$.
However, the approximation $u_n$ is still picked from an infinite-dimensional
space, namely
\be
V_{\Lambda_n}:=V \otimes {\rm span}\{\phi_\nu\; : \; \nu\in \Lambda_n\}.
\ee
The summability results for the $W^k$ or $H^k$ norms of the coefficients $u_\nu$ stated 
in the previous section allow us to introduce further spatial discretization resulting 
in approximations picked from finite-dimensional spaces, and 
to analyze the rate of convergence in terms of the dimension of such spaces.
As already noted in Remark \ref{remstringent}, since the conditions in these results become more stringent as $k$ increases, we generally obtain
that
\be
(\|u_\nu\|_{H^k})_{\nu\in \cF}\in \ell^{p_k}(\cF),
\ee
for some sequence
\be
p_1\leq p_2 \leq \dots \leq p_k.
\ee
Generally speaking, we assume that 
\be
(\|u_\nu\|_{V})_{\nu\in \cF}\in \ell^{p_V}(\cF),
\ee
and that for some given regularity class $X\subset V$ of interest,
we have that 
\be
(\|u_\nu\|_{X})_{\nu\in \cF}\in \ell^{p_X}(\cF),
\label{px}
\ee
for some $p_V\leq p_X$.

The spatial discretization is based on a sequence 
$(V_n)_{n>0}$ of subspaces of $V$ with dimension
\be
\dim(V_n)=n.
\ee
One instance is obtained by considering Lagrange finite elements 
on a family of quasi-uniform regular simplicial partitions $(\cT_h)_{h>0}$ 
of mesh size $h$, up to isoparametric transformations in the case of curved domains.
Since $D$ is a bounded domain of $\R^m$, 
the dimension $n$ of $V_n$ is related to the corresponding mesh size
\be
n\sim h^{-m},
\ee
We assume that such spaces have the following \emph{approximation property}: there is
a constant $C_X>0$ and $t>0$, 
\be
\min_{v_n\in V_n} \|v-v_n\|_V \leq C_Xn^{-t} \|v\|_{X}, \quad n > 0, \quad v\in X,
\label{spatialappn}
\ee
where $X\subset V$ is the regularity class for which \iref{px}Ê holds.

For example, with $X=H^k(D)$, classical error estimates \cite{Cia} yield
the convergence rate $t=\frac{k-1}m$ by using Lagrange finite elements of order at least $k-1$
on quasi-uniform partitions. Note that the spaces $W^k$ introduced in the
previous section do not always coincide with $H^k(D)$. 
For example, for $k=2$ and dimension $m=2$, we know that $W$ is strictly larger than $H^2(D)$
when $D$ is a polygon with re-entrant corner. In this case, it is well known that 
the optimal rate $t=1/2$ is yet attained with $X= W$, 
when using spaces $V_n$ associated to meshes $(\cT_n)_{n>0}$ with proper refinement near 
the re-entrant corners where the functions $v\in W$ might have singularities.

The relevant finite element discretizations are therefore typically generated by uniform 
or non-uniform refinement from some regular, initial triangulation of $D$. 
Note that not all values of $n>0$ may be realized in this process. 
However, up to completing the sequence by $V_n:=V_{\tilde n}$,
where $\tilde n$ is the largest attained value below $n$, we reach the same approximation
estimate \iref{spatialappn} with spaces $V_n$ of dimension at most $n$.

We apply spatial discretization in \iref{nterm} by replacing each $u_\nu\in V$ by some
$u_{\nu,n_\nu}\in V_{n_\nu}$, where we allow the number of degrees of freedom $n_\nu$ 
to depend on $\nu$.
This variability is critical for the resulting convergence rate
in terms of the total number of degrees of freedom.
Thus, with
\be\label{bndof}
\bn:=(n_\nu)_{\nu\in\Lambda_n},
\ee 
the resulting approximant
\be
u_{\bn}:=\sum_{\nu\in\Lambda_n} u_{\nu,n_\nu} \phi_\nu,
\label{ntermh}
\ee
belongs to the space
\be
V_{\bn}:=\bigoplus_{\nu\in\Lambda_n} (V_{n_\nu}Ê\otimes \RÊ\phi_\nu).
\label{vbn}
\ee
This space is characterized by the total number of degrees of freedom for each $V_{n_\nu}$:
\be
N=\dim(V_{\bn})=\sum_{\nu\in \Lambda_n} n_{\nu}.
\ee
We again take for $\Lambda_n$ a set corresponding to %the 
$n$ largest $\|u_\nu\|_{V}$.

If the coefficients $u_\nu$ belong to the smoothness space $X$, 
there exist $u_{\nu,n_\nu}\in V_{n_\nu}$ such that
\be
\|u_\nu-u_{\nu,n_\nu}\|_V \leq C_X n_\nu^{-t} \|u_\nu\|_X.
\ee
We may in particular take for $u_{\nu,n_\nu}$ the $V$-orthogonal projection
of $u_\nu$ onto $V_{n_\nu}$.

We may then write for a given norm $\cV$, 
\be
\|u-u_{\bn}\|_{\cV} \leq \|u-u_n\|_{\cV} +\Big\|\sum_{\nu\in \Lambda_n} (u_\nu-u_{\nu,n_\nu})\phi_\nu\Big\|_{\cV}.
\ee
In the case of Taylor series (or more general polynomial series where the $\phi_\nu$ are 
uniformly bounded by $1$ over $U$), taking $\cV=L^\infty(U,V)$, we control the first term by
\be\label{ntermconvergence}
\|u-u_n\|_{\cV} \leq \sum_{\nu\notin \Lambda_n} \|u_\nu\|_V \leq Cn^{-s}, \quad s:=\frac 1 {p_V}-1, \quad C:=\|(\|u_\nu\|_V)_{\nu\in\cF} \|_{\ell^{p_V}},
\ee
and the second term by
\be
\Big\|\sum_{\nu\in \Lambda_n} (u_\nu-u_{\nu,n_\nu})\phi_\nu \Big\|_{\cV}\leq C_X\sum_{\nu\in \Lambda_n} n_\nu^{-t} \|u_\nu\|_{X}.
\ee
We now allocate the degrees of freedom, that is, the values of $n_\nu$, in order to minimize $N$
for a given total error. Leaving aside the multiplicative constants, this amounts to solving the constrained minimization problem
\be
\min \Big\{\sum_{\nu\in \Lambda_n} n_\nu \; : \; \sum_{\nu\in \Lambda_n} n_\nu^{-t} \|u_\nu\|_{X}\leq n^{-s} \Big\}.
\ee
Up to introducing multiplicative constants, we treat the $n_\nu$ as real numbers, and using a Lagrange multiplier we obtain
\be
n_\nu=\eta \|u_\nu\|_{X}^{\frac 1 {1+t}},
\label{optimnu}
\ee
for some $\eta>0$ independent of $\nu\in \Lambda_n$. 
Its value is determined by the saturated constraint
\be
n^{-s}= \sum_{\nu\in \Lambda_n}n_\nu^{-t} \|u_\nu\|_{X}\
=
\eta^{-t}\sum_{\nu\in \Lambda_n} \|u_\nu\|_{X}^{\frac 1 {1+t}},
\label{saturated}
\ee
and therefore
\be
\eta=n^{\frac s t}\Big(\sum_{\nu\in \Lambda_n} \|u_\nu\|_{X}^{\frac 1 {1+t}}\Big)^{\frac 1 t}.
\label{eta}
\ee
Combining this with \iref{optimnu} and summing over $\nu\in\Lambda_n$, we find
\be
N=n^{\frac s t}\Big(\sum_{\nu\in \Lambda_n} \|u_\nu\|_{X}^{\frac 1 {1+t}}\Big)^{\frac {1+t} t}.
\label{dofN}
\ee
We now distinguish between two cases.
\begin{enumerate}
\item
$p_X\leq \frac 1 {t+1}$ in which case
$\sum_{\nu\in \Lambda_n}  \|u_\nu\|_{X}^{\frac 1 {1+t}}$
is bounded independently of the set $\Lambda_n$. 
Since the global error is controlled by $n^{-s}$, 
we obtain a bound of the form
\be
\|u-u_{\bn}\|_{\cV} \leq CN^{-t}.
\label{globalxyerror3}
\ee
Note that this is the convergence rate for the spatial discretization 
of a single $v\in X$ as given by \iref{spatialappn}.
\item
$p_X> \frac 1 {t+1}$: 
in this case $\sum_{\nu\in \Lambda_n}\|u_\nu\|_{X}^{\frac 1 {1+t}}$
may not be uniformly bounded and we estimate it using H\"older's inequality
that gives
$$
\sum_{\nu\in \Lambda_n} \|u_\nu\|_{X}^{\frac 1 {1+t}}
\leq Cn^\delta, \quad \delta:=1-\frac 1 {p_X(1+t)} > 0\;.
$$
According to \iref{dofN}, we thus have
\be
N \leq  Cn^{\frac {s+(1+t)\delta} t}\;.
\ee
Combining this with the fact that the global error is controlled by $n^{-s}$,  
we obtain a bound of the form
\be
\|u-u_{\bn}\|_{\cV} \leq CN^{-r}, \quad r:=\frac {st}{s+(1+t)\delta}.
\label{globalxyerror4}
\ee
Note that 
\be
r=\frac {st}{t+s+1-p_X^{-1}} \leq \frac {st}{t+s+1-p_V^{-1}}=s.
\ee
On the other hand, since the second expression increases with $s$ 
as long as $s\geq p_X^{-1}-1$, 
and since the actual value of $s$ is $p_V^{-1}-1\geq p_X^{-1}-1$, 
we find that
$r\geq \frac 1 {p_X}-1$. In summary, we find that
\be
\frac 1 {p_X}-1\leq r \leq \frac 1 {p_V}-1.
\ee
\end{enumerate}

In the case of Jacobi or Hermite series 
(or more general orthonormal polynomial expansions),
taking $\cV=L^2(U,V)$ with the appropriate probability measure, 
we control the first term by
\be
\|u-u_n\|_{\cV} = \left(\sum_{\nu\notin \Lambda_n} \|u_\nu\|_V^2\right)^{1/2} \leq Cn^{-s}, \quad s:=\frac 1 {p_V}-\frac 1 2, \quad C:=\|(\|u_\nu\|_V)_{\nu\in\cF} \|_{\ell^{p_V}},
\ee
and the second term by
\be
\Big\|\sum_{\nu\in \Lambda_n} (u_\nu-u_{\nu,n_\nu})\phi_\nu \Big\|_{\cV}
=\left(\sum_{\nu\in \Lambda_n}\|u_\nu-u_{\nu,n_\nu}\|_V^2\right)^{1/2}
\leq C_X\left(\sum_{\nu\in \Lambda_n} n_\nu^{-2t} \|u_\nu\|_{X}^2\right)^{1/2}.
\ee
Proceeding in a similar way as in the previous case for the optimal allocation of 
the degrees of freedom, we now obtain
\be
N=n^{\frac s t}\Big(\sum_{\nu\in \Lambda_n} \|u_\nu\|_{X}^{\frac 2 {1+2t}}\Big)^{\frac {1+2t} {2t}}.
\label{dofN2}
\ee
This leads to the following two cases which are discussed in the same way as before:
\begin{enumerate}
\item
$p_X\leq \frac 2 {2t+1}$ in which case we obtain the global error bound
\iref{globalxyerror3}.
\item
$p_X> \frac 2 {2t+1}$ in which case we obtain the global error bound
\iref{globalxyerror4}, with rate now given by
\be
r:=\frac {st}{s+t+\frac 1 2-p_X^{-1}},
\label{rsecondcase}
\ee
which satisfies
\be
\frac 1 {p_X}-\frac 1 2\leq r\leq \frac 1 {p_V}-\frac 1 2.
\ee
\end{enumerate}

We summarize the above discussion with the following result.

\begin{theorem}\label{theoconvrate}
Assume that the discretization spaces $(V_n)_{n>0}$ satisfy the approximation property \iref{spatialappn}
for a certain smoothness class $X\subset V$
and that $(\|u_\nu\|_{V})_{\nu\in \cF}\in\ell^{p_V}(\cF)$ and $(\|u_\nu\|_{X})_{\nu\in \cF}\in\ell^{p_X}(\cF)$ for some $0<p_V\leq p_X <2$. 
Then, for each $n$
there exists a vector $\bn:=(n_\nu)_{\nu\in\Lambda_n}$ such that
\be
\min_{u_{\bn}\in V_{\bn}} \|u-u_{\bn}\|_{\cV} \leq CN^{-\min\{r,t\}},
\label{spaceparrate}
\ee
where $N=\sum_{\nu\in\Lambda_n} n_\nu=\dim(V_\bn)$.
Here $\cV$ is $L^\infty(U,V)$ in the case of Taylor series, $L^2(U,V,\sigma)$ in the case
of Jacobi series and $L^2(U,V,\gamma)$ in the case of Hermite series. 
The rate $t$ corresponds to the spatial approximation of a single $v\in X$ as given by \iref{spatialappn}.
The rate $r$ is given by \iref{globalxyerror4} in the first case
and by \iref{rsecondcase} in the second case.
The constant $C$ in \iref{spaceparrate} depends on
the quantities $\|(\|u_\nu\|_V)_{\nu\in\cF} \|_{\ell^{p_V}}$
and $\|(\|u_\nu\|_X)_{\nu\in\cF} \|_{\ell^{p_X}}$, as well as on
the constant $C_X$ in \iref{spatialappn}.
\end{theorem}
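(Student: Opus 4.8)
The plan is to reconstruct the optimization argument laid out in the discussion preceding the statement, treating the Taylor case ($\cV=L^\infty(U,V)$) and the orthonormal case ($\cV=L^2(U,V)$ with the appropriate measure) in parallel. First I would fix $\Lambda_n$ to be the set of indices of the $n$ largest $\|u_\nu\|_V$, and for each $\nu\in\Lambda_n$ take $u_{\nu,n_\nu}$ to be the $V$-orthogonal projection of $u_\nu$ onto $V_{n_\nu}$, so that the approximation property \iref{spatialappn} gives $\|u_\nu-u_{\nu,n_\nu}\|_V\leq C_X n_\nu^{-t}\|u_\nu\|_X$. A triangle inequality then splits the total error into a parametric truncation part $\|u-u_n\|_\cV$, with $u_n$ as in \iref{nterm}, and a spatial discretization part $\bignorm{\sum_{\nu\in\Lambda_n}(u_\nu-u_{\nu,n_\nu})\phi_\nu}_\cV$.

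For the truncation part I would invoke the $\ell^{p_V}$ summability of $(\|u_\nu\|_V)_{\nu\in\cF}$ together with the best $n$-term estimates: in the $L^\infty$ case this is the monotone tail bound \iref{ntermconvergence} yielding $\|u-u_n\|_\cV\leq Cn^{-s}$ with $s=p_V^{-1}-1$, and in the $L^2$ case the Parseval identity yields the same bound with $s=p_V^{-1}-\tfrac12$. For the spatial part, the uniform boundedness (respectively the orthonormality) of the $\phi_\nu$ reduces the norm to $\sum_{\nu\in\Lambda_n}n_\nu^{-t}\|u_\nu\|_X$ (respectively to its $\ell^2$ analogue).

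The heart of the argument is the allocation of the budget $\bn=(n_\nu)_{\nu\in\Lambda_n}$. I would balance the two contributions by requiring the spatial part not to exceed $n^{-s}$, and minimize $N=\sum_{\nu\in\Lambda_n}n_\nu$ under that constraint. Relaxing the $n_\nu$ to positive reals and using a Lagrange multiplier gives the closed form $n_\nu=\eta\|u_\nu\|_X^{1/(1+t)}$ (respectively $\eta\|u_\nu\|_X^{2/(1+2t)}$); saturating the constraint then fixes $\eta$ and yields the expressions \iref{dofN} (respectively \iref{dofN2}) for $N$ in terms of $n$ and the quantity $\Sigma:=\sum_{\nu\in\Lambda_n}\|u_\nu\|_X^{1/(1+t)}$.

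The final step is the case distinction governed by the position of $p_X$ relative to the threshold $\tfrac{1}{t+1}$ (respectively $\tfrac{2}{2t+1}$). If $p_X$ lies below the threshold, then $(\|u_\nu\|_X^{1/(1+t)})$ is $\ell^1$-summable, so $\Sigma$ is bounded uniformly in $\Lambda_n$, forcing $N\sim n^{s/t}$ and hence the single-function spatial rate $t$ of \iref{globalxyerror3}. If $p_X$ exceeds the threshold, I would control $\Sigma$ by H\"older's inequality applied to the $n$ terms of $\Lambda_n$, obtaining $\Sigma\leq Cn^\delta$ with $\delta=1-(p_X(1+t))^{-1}>0$; feeding this into the formula for $N$ and eliminating $n$ in favour of $N$ produces the reduced rate $r$ of \iref{globalxyerror4} (respectively \iref{rsecondcase}). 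Writing the outcome uniformly as $N^{-\min\{r,t\}}$ completes the proof. I expect the main obstacle to be essentially bookkeeping: keeping the exponents in the Lagrange optimization and in the H\"older step consistent across the $L^\infty$ and $L^2$ settings, and verifying that the stated threshold on $p_X$ is exactly the value at which $\Sigma$ transitions from uniformly bounded to algebraically growing.
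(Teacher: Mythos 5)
Your proposal reproduces the paper's own argument essentially verbatim: the same triangle-inequality splitting into parametric truncation and spatial discretization errors, the same Lagrange-multiplier allocation of the $n_\nu$, and the same case distinction on $p_X$ relative to $\tfrac1{t+1}$ (resp.\ $\tfrac2{2t+1}$) with H\"older's inequality in the supercritical case. The approach and all exponents are consistent with the derivation in \S\ref{sec:SpcParDiscr}, so the proof is correct as outlined.
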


\begin{remark}
The idea of optimizing the allocation of spatial degrees of freedom for the different
coefficients $u_\nu$ in the expansion \iref{series} has also been 
developed in \cite{Di15} using a slightly different approach.
Here, one uses a nested hierarchy $(V_{2^k})_{k\geq 0}$ of space discretization
and $P_{V_{2^k}}$ denotes a $V$-stable projector on the space $V_{2^k}$, so that
the approximation property \iref{spatialappn} now takes the form
\be
\|v-P_{V_{2^k}} v\|_V \leq C_X 2^{-kt} \|v\|_{X}, \quad k \geq 0\;.
\label{spatialappk}
\ee
The approximations of the solution map are of the form
\be
u_G=\sum_{(k,\nu)\in G} \delta_k(u_\nu) \phi_\nu(y)
\ee
where $G$ is a finite subset of $\N_0\times \cF$, and where the $\delta_k(u_\nu)$
are multiscale components defined by
\be
\delta_k(v):=P_{V_{2^k}}v-P_{V_{2^{k-1}}}v, \quad k\geq 1, \quad \delta_0(v):=P_{V_1}v.
\ee
In \cite{Di15}, the proposed set $G$ 
consists of pairs $(k,\nu)$ that satisfy
\be
2^k\leq \alpha_\nu,
\label{knu}
\ee
for some appropriately chosen $(\alpha_\nu)_{\nu\in\cF}$. This leads to an approximation of the
form
\be
u_G=\sum_{\nu\in\Lambda} P_{V_{2^{k(\nu)}}} u_\nu\, \phi_\nu(y),
\ee
where $\Lambda$ is the set of $\nu$ such that $\alpha_\nu\geq 1$ and $k(\nu)$ is the largest $k$
such that \iref{knu} holds. This approximation is therefore similar to that proposed in \iref{ntermh},
and optimizing the choice of the sequence $(\alpha_\nu)_{\nu\in\cF}$ leads to similar 
convergence rates as given in Theorem \ref{theoconvrate}.
\end{remark}

\section{Summability of Taylor expansions}
\label{sec:TayExp}
\subsection{$H^2$-regularity}
\label{sec:H2Reg}
Establishing the $\ell^p$ summability 
in Theorem \ref{theotaylorH2} will be based 
on a weighted $\ell^2$ estimate expressed in the following result.
\begin{theorem}
\label{theotaylorwH2}
Assume that $\bar a\in L^\infty(D)$  is such that $\essinf \bar a>0$,
and that there exists a sequence $\rho=(\rho_j)_{j\geq 1}$ of positive numbers
such that \iref{uearho} holds. 
Assume in addition that the right side $f$ in \iref{ellip}
belongs to $L^2(D)$ and that $\bar a$ and all functions $\psi_j$
belong to $W^{1,\infty}(D)$ and that \iref{sumgradrho} holds. 
Then
\be
\sum_{\nu\in\cF} (\rho^{\nu} \|t_\nu\|_W)^2 <\infty, \quad 
\rho^\nu:=\prod_{j\geq 1} \rho_j^{\nu_j}\;.
\label{taylorwH2}
\ee
\end{theorem}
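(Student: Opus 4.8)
The plan is to turn the elliptic problem into a pointwise recursion for the Laplacians $\Delta t_\nu$ and then close a weighted $\ell^2$ estimate by exploiting the sub-unit bound $\theta<1$ from \iref{uearho}. First I would record the coefficient recursion. Inserting the power series $u=\sum_\nu t_\nu y^\nu$ into the weak form and matching powers of $y$ gives, for $\nu\neq0$,
\[
\int_D \bar a\,\nabla t_\nu\cdot\nabla v=-\sum_{j:\nu_j\ge1}\int_D \psi_j\,\nabla t_{\nu-e_j}\cdot\nabla v,\qquad v\in V,
\]
while $t_0$ solves the nominal problem with data $f$. Equivalently $-\divergence(\bar a\nabla t_\nu)=\divergence(\sum_{j}\psi_j\nabla t_{\nu-e_j})$. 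Expanding the divergences by the product rule and using $\essinf\bar a>0$ together with $\bar a,\psi_j\in W^{1,\infty}(D)$, I divide by $\bar a$ to obtain the a.e.\ functional recursion
\[
\Delta t_\nu=-\frac{\nabla\bar a}{\bar a}\cdot\nabla t_\nu-\sum_{j:\nu_j\ge1}\frac{\psi_j}{\bar a}\,\Delta t_{\nu-e_j}-\sum_{j:\nu_j\ge1}\frac{\nabla\psi_j}{\bar a}\cdot\nabla t_{\nu-e_j},
\]
with an extra term $-f/\bar a$ when $\nu=0$. An induction on $\abs{\nu}$ shows that each $t_\nu\in W$, the base case using $f\in L^2(D)$.

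The core step is to set $A_\nu:=\rho^\nu\|\Delta t_\nu\|_{L^2}$, the quantity to be summed, and to split the recursion into the self-similar term $\sum_j\frac{\psi_j}{\bar a}\Delta t_{\nu-e_j}$ and a source $S_\nu$ gathering the two gradient terms (plus $f$ at $\nu=0$). Writing $b_j:=\rho_j\abs{\psi_j}/\bar a$, condition \iref{uearho} reads $\sum_j b_j\le\theta<1$ a.e. A weighted Cauchy--Schwarz in the index $j$,
\[
\Bigl(\sum_j \frac{\abs{\psi_j}}{\bar a}\abs{\Delta t_{\nu-e_j}}\Bigr)^2=\Bigl(\sum_j b_j\,\rho_j^{-1}\abs{\Delta t_{\nu-e_j}}\Bigr)^2\le\theta\sum_j b_j\,\rho_j^{-2}\abs{\Delta t_{\nu-e_j}}^2,
\]
multiplied by $(\rho^\nu)^2$, integrated over $D$, summed over $\nu$, and reindexed $\mu=\nu-e_j$ using $(\rho^\nu)^2\rho_j^{-2}=(\rho^{\nu-e_j})^2$, collapses (invoking $\sum_j b_j\le\theta$ once more) to $\theta^2\sum_\mu A_\mu^2$. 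Hence the self-similar part contributes at most $\theta\,\|(A_\nu)\|_{\ell^2}$; this contraction is the heart of the argument.

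Finally I would bound the source in $\ell^2$. The term $\frac{\nabla\bar a}{\bar a}\cdot\nabla t_\nu$ is pointwise $\le\|\nabla\bar a/\bar a\|_{L^\infty}\abs{\nabla t_\nu}$, and $\sum_j\frac{\nabla\psi_j}{\bar a}\cdot\nabla t_{\nu-e_j}$ is handled by the identical weighted Cauchy--Schwarz with $c_j:=\rho_j\abs{\nabla\psi_j}/\bar a$, whose sum $\sum_j c_j$ is bounded a.e.\ by \iref{sumgradrho} and $\essinf\bar a>0$. Both contributions are therefore controlled by $\sum_\nu(\rho^\nu\|\nabla t_\nu\|_{L^2})^2$, which is finite by the weighted $\ell^2$ bound for the $V$-norm underlying Theorem \ref{theotaylorV}. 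Combining the two estimates yields $\|(A_\nu)\|_{\ell^2}\le\theta\|(A_\nu)\|_{\ell^2}+\|(\rho^\nu S_\nu)\|_{\ell^2}$, whence $\|(A_\nu)\|_{\ell^2}\le(1-\theta)^{-1}\|(\rho^\nu S_\nu)\|_{\ell^2}<\infty$, which is exactly \iref{taylorwH2}. The main obstacle is twofold: deriving the $\Delta t_\nu$ recursion cleanly (the elliptic and product-rule bookkeeping, which is what forces the $W^{1,\infty}$ hypotheses on $\bar a$ and the $\psi_j$) and justifying the absorption step, since $\|(A_\nu)\|_{\ell^2}\le\theta\|(A_\nu)\|_{\ell^2}+\dots$ presupposes that $\|(A_\nu)\|_{\ell^2}$ is finite. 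I would secure the latter by first running the entire argument on truncations to finitely many active variables, where all sums are manifestly finite, obtaining a bound uniform in the truncation, and then passing to the limit by monotone convergence.
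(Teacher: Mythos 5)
Your proposal is correct in substance and rests on the same two pillars as the paper's proof: the recursion \iref{DeltRec} for $\Delta t_\nu$ obtained from the strong form $-a\Delta u-\nabla a\cdot\nabla u=f$ (which is exactly what the $W^{1,\infty}$ hypotheses buy), and the splitting of its right-hand side into a contractive part $\sum_j\psi_j\Delta t_{\nu-e_j}$ controlled by $\theta<1$ and a source part involving only gradients, controlled by the already-known weighted $\ell^2$ summability of $(\rho^\nu\|t_\nu\|_V)_\nu$ from Theorem \ref{theotaylorV}. Where you differ is in how the contraction is closed. The paper works level by level: it sets $C_n=\sum_{|\nu|=n}\int_D\bar a|\Delta t_\nu|^2$, tests the recursion against $\Delta t_\nu$, uses Young's inequality with a small parameter $\e$ to absorb the diagonal terms, and derives $C_n\le\tau C_{n-1}+A(D_n+D_{n-1})$ with $\tau<1$, concluding geometric decay of $C_n$ by a two-parameter induction ($\kappa\le\tau<\delta<1$). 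You instead work globally in $\ell^2(\cF;L^2(D))$: the triangle inequality plus a weighted Cauchy--Schwarz in $j$ (with weights $b_j=\rho_j|\psi_j|/\bar a$, $\sum_j b_j\le\theta$) shows the self-similar term contributes at most $\theta\|(A_\nu)\|_{\ell^2}$, and the factor $(1-\theta)^{-1}$ closes the estimate in one stroke. Your route is arguably cleaner --- it avoids the $\e$-Young bookkeeping and yields the sharp contraction factor $\theta$ rather than $\theta/(2-\theta-2\e B)$ --- at the price of making the a priori finiteness needed for absorption more visible; the paper's level-by-level version has the same latent issue (absorbing $(\frac\theta2+\e B)C_n$ presumes $C_n<\infty$) but does not discuss it.

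One small repair to your regularization step: truncating to finitely many active variables does not by itself make $\sum_\nu A_\nu^2$ ``manifestly finite,'' since the set of multi-indices supported on $\{1,\dots,J\}$ is still infinite. You should additionally truncate in the total degree, i.e.\ work with $\Sigma_N^{(J)}:=\sum_{|\nu|\le N,\ \supp\nu\subset\{1,\dots,J\}}A_\nu^2$, which is a finite sum; the reindexing $\mu=\nu-e_j$ maps this index set into itself (lowering the degree), so the absorption gives $\Sigma_N^{(J)}\le(1-\theta)^{-2}S^2$ uniformly in $N$ and $J$, and monotone convergence finishes the argument. With that adjustment your proof is complete.
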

Before giving the proof of this theorem,
let us explain why it implies Theorem \ref{theotaylorH2}.
With $0<q<\infty$ and $0<p<2$ such that $\frac 1 p=\frac 1 q +\frac 1 2$, we obtain by H\"older's inequality
that 
\be
\left (\sum_{\nu\in\cF} \|t_\nu\|_W^p\right )^{1/p} \leq  \left (\sum_{\nu\in\cF} (\rho^{\nu} \|t_\nu\|_W)^2\right )^{1/2}
\left (\sum_{\nu\in\cF} \rho^{-q\nu} \right )^{1/q}.
\label{holder}
\ee
From the factorization
\be
\sum_{\nu\in\cF} \rho^{-q\nu}=\prod_{j\geq 1} \sum_{n\geq 0} \rho_j^{-nq}=\prod_{j\geq 1}\frac {1}{1-\rho_j^{-q}},
\ee
we easily conclude that the second factor on the right of \iref{holder} is finite under the assumption that
the $\rho_j$ are strictly larger than $1$ and $(\rho_j^{-1})_{j\geq 1}\in \ell^q(\N)$. 
Therefore $(\|t_\nu\|_W)_{\nu\in\cF}$ belongs to $\ell^p(\cF)$.

In the next proof we use the observation that for a given instance of
the elliptic PDE \iref{ellip}, when $f\in L^2$ and $a\in W^{1,\infty}$ is a strictly positive function, 
the equation can be written in the strong form
\be
-a\Delta u-\nabla a\cdot \nabla u=f,
\label{strongellip}
\ee
where all terms in this identity belong to $L^2(D)$.
\nl
\nl
{\bf Proof of Theorem \ref{theotaylorwH2}:} 
We first observe that it suffices to prove the result in the
case where $\rho_j=1$ for all $j\geq 1$. 
Indeed, the condition \iref{uearho} for a general positive
sequence $\rho$ is equivalent to the same condition with all $\rho_j=1$ when the functions $\psi_j$ 
are replaced by the functions $\rho_j\psi_j$. This amounts to considering the solution map
\be
y\mapsto \tilde u(y):=u(\rho y), \quad \rho y:=(\rho_j y_j)_{j\geq 1}.
\ee
Therefore, the result established in this particular case gives that $\sum_{\nu\in\cF} \|\tilde t_\nu\|_{W}^2<\infty$,
for the Taylor coefficients of $\tilde u$ which are given by $\tilde t_\nu=\rho^\nu t_\nu$, 
which is equivalent to \iref{taylorwH2}. 
We thus next assume that $\rho_j=1$ for all $j$ and establish $\ell^2$ summability of the 
$\|t_\nu\|_W$, under the condition that for some $0<\theta < 1$ holds
\be
\left\| \sum_{j\geq 1} \frac{|\psi_j|}{\bar{a}} \right\|_{L^\infty} \leq \theta \;.
\label{thetain}
\ee
We first observe that since $0<\bar a_{\min} \leq \bar a \leq \bar a_{\max}<\infty$, 
we have the norm equivalences
\be
\bar a_{\min}  \|v\|_V^2 \leq \int_D \bar a |\nabla v|^2\leq \bar a_{\max}  \|v\|_V^2,
\ee
and
\be
\bar a_{\min}  \|v\|_W^2 \leq \int_D \bar a |\Delta v|^2\leq \bar a_{\max}  \|v\|_W^2,
\ee
It will be more convenient to work with the above equivalent norms for $V$ and $W$. We 
thus introduce the quantities
\be
d_\nu:=\int_D \bar a |\nabla t_\nu|^2 \quad {\rm and} \quad c_\nu:=\int_D \bar a |\Delta t_\nu|^2,
\ee
and prove that $\sum_{\nu\in \cF}c_\nu<\infty$. For this purpose, we also introduce the quantities
\be
D_n:=\sum_{|\nu|=n} d_\nu\quad {\rm and} \quad C_n:=\sum_{|\nu|=n} c_\nu.
\ee
We first recall the argument from \cite{BCM} showing that $\sum_{\nu\in \cF}d_\nu<\infty$.
Applying $\frac 1 {\nu !}\partial^\nu$ at $y=0$ to the variational formulation
\be
\int_D a(y) \nabla u(y)\cdot \nabla v= \langle f,v\rangle_{V',V}, \quad v\in V, 
\ee
yields the recursion identity
\be
\int_D \bar a \nabla t_\nu\cdot \nabla v=-\sum_{j\in {\rm supp}(\nu)} \int_D \psi_j \nabla t_{\nu-e_j}\cdot\nabla v, \quad v\in V, 
\ee
with $e_j=(0,\dots,0,1,0,\dots)$ the Kronecker sequence of index $j$. Taking $v=t_\nu$ as a test function
and applying Young's inequality gives
\be
d_\nu \leq \frac 1 2\sum_{j\in {\rm supp}(\nu)} \int_D |\psi_j| |\nabla t_{\nu}|^2+\frac 1 2\sum_{j\in {\rm supp}(\nu)} \int_D |\psi_j| |\nabla t_{\nu-e_j}|^2.
\ee
From \iref{thetain}, the first term on the right does not exceed $\frac \theta 2 d_\nu$. 
Summing over $|\nu|=n$, we thus obtain
\begin{align*}
D_n & \leq \frac \theta 2 D_n+\frac 1 2\sum_{|\nu|=n}\sum_{j\in {\rm supp}(\nu)} \int_D |\psi_j| |\nabla t_{\nu-e_j}|^2\\
&= \frac \theta 2 D_n+\frac 1 2\sum_{|\nu|=n-1}\int_D \Bigl(\sum_{j\geq 1}|\psi_j| \Bigr)|\nabla t_{\nu}|^2\\
& \leq  \frac \theta 2 D_n+ \frac \theta 2 D_{n-1},
\end{align*}
where we have again used \iref{thetain}. This shows that
\be 
\label{eq:DnBd}
D_n\leq \kappa D_{n-1}\leq\dots \leq \kappa^n D_0, \quad \kappa=\frac{\theta}{2-\theta}<1,
\ee
with $D_0=d_0=\int_{D} \bar a |\nabla u(0)|^2\leq \frac {\|f\|_{V'}^2}{\bar a_{\min}}<\infty$,
and in particular that $\sum_{\nu\in\cF} d_\nu=\sum_{n\geq 0} D_n<\infty$.

We next turn to estimating the quantities $C_n$. For this we observe that for any $y\in U$
the function $a(y)$ belongs to $W^{1,\infty}(D)$. This allows to use the strong form \iref{strongellip}
\be
-a(y)\Delta u(y)-\nabla a(y)\cdot \nabla u(y)=f,
\ee
where equality holds in $L^2(D)$. 
Applying $\frac 1 {\nu !}\partial^\nu$ at $y=0$, for any $\nu\neq 0$, gives
\be 
\label{DeltRec}
-\bar a \Delta t_\nu=\nabla \bar a \cdot \nabla t_\nu+\sum_{j\in {\rm supp}(\nu)} (\psi_j \Delta t_{\nu-e_j}+\nabla \psi_j\cdot \nabla t_{\nu-e_j}),
\ee
which shows by recursion that $\Delta t_\nu\in L^2(D)$ for all $\nu\in \cF$.
Integrating against $\Delta t_\nu$, and applying Young's inequality we find that, for any $\e >0$,
\begin{align*}
c_\nu & \leq \frac 1 2\sum_{j\in {\rm supp}(\nu)} \int_D |\psi_j| |\Delta t_{\nu}|^2
+\e \sum_{j\in {\rm supp}(\nu)} \int_D |\nabla \psi_j| |\Delta t_{\nu}|^2+\e \int_D |\nabla \bar a| |\Delta t_{\nu}|^2\\
& 
+\frac 1 2\sum_{j\in {\rm supp}(\nu)} \int_D |\psi_j| |\Delta t_{\nu-e_j}|^2+\frac 1 {4\e}\sum_{j\in {\rm supp}(\nu)} \int_D |\nabla \psi_j| |\nabla t_{\nu-e_j}|^2 +\frac 1 {4\e} \int_D |\nabla \bar a| |\nabla t_{\nu}|^2 .
\end{align*}
The first three terms on the right are bounded by $(\frac \theta 2+\e B)c_\nu$, where
\be\label{defB}
B:=\Big \| |\nabla \bar a|+\sum_{j} |\nabla \psi_j| \Big \|_{L^\infty}<\infty.
\ee
Summing over $|\nu|=n$, and exchanging the summations in $j$ and $\nu$ in the last three terms,
we thus obtain
\be
C_n\leq (\frac \theta 2+\e B)C_n + \frac \theta 2 C_{n-1}+\frac {B}{4\e \bar a_{\min}} (D_n+D_{n-1}).
\ee
We next choose $\e>0$ small enough so that $\frac \theta 2+\e B<\frac 1 2$, so that
\be
C_n\leq \tau C_{n-1}+ A(D_n+D_{n-1}), \quad \tau:=\frac {\theta}{2-\theta -2\e B}<1, \quad A:=\frac {B}{2\e \bar a_{\min}(2-\theta -2\e B)}.
\ee
Since we have already seen that $D_n\leq D_0 \kappa^n$, we find that
\be
C_n\leq \tau C_{n-1} +AD_0(1+\kappa^{-1}) \kappa^n.
\ee
We now choose a $\delta$ such that
\be
\kappa\leq \tau<\delta <1,
\ee
and observe that $C_{n-1}\leq C\delta^{n-1}$ implies that
$C_n \leq (\tau \delta^{-1} C+AD_0(1+\kappa^{-1})) \delta^n$. We thus find by induction that
\be
C_n\leq \max\Big\{C_0,\frac {AD_0(1+\kappa^{-1})}{1-\tau \delta^{-1}}\Big\} \delta^n,
\ee
and in particular that $\sum_{\nu\in\cF} c_\nu=\sum_{n\geq 0} C_n<\infty$. \hfill $\Box$

\subsection{Higher-order regularity}

Similar to $W$ or $H^2$ regularity, the proof of $\ell^p$ summability 
in Theorem \ref{theotaylorHm} can be derived by the same argument from weighted $\ell^2$ estimate expressed
in the following result, analogous to Theorem \ref{theotaylorwH2}.

\begin{theorem}
\label{theotaylorwHm}
Assume that $\bar a\in L^\infty(D)$  is such that $\essinf \bar a>0$, and
that there exists a sequence $\rho=(\rho_j)_{j\geq 1}$ of positive numbers
such that \iref{uearho} holds.  Assume in addition that the right side $f$ in \iref{ellip}
belongs to $H^{k-2}(D)$ for integer $k\geq 2$, 
that the domain $D$ has $C^{k-2,1}$ smoothness,
that $\bar a$ and all functions $\psi_j$
belong to $W^{k-1,\infty}(D)$, and that \iref{sumderrho} holds. 
Then
\be
\sum_{\nu\in\cF} (\rho^{\nu} \|t_\nu\|_{W^k})^2 <\infty, \quad \rho^\nu:=\prod_{j\geq 1} \rho_j^{\nu_j}.
\ee
\end{theorem}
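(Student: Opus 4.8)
The plan is to mirror the proof of Theorem \ref{theotaylorwH2} while bootstrapping in the smoothness order by an induction on $k$, the base case $k=2$ being exactly that theorem. First I would carry out the same rescaling reduction: since $D^\alpha(\rho_j\psi_j)=\rho_j D^\alpha\psi_j$, replacing each $\psi_j$ by $\rho_j\psi_j$ turns both \iref{uearho} and \iref{sumderrho} into the corresponding statements with all $\rho_j=1$, while the rescaled solution map $\tilde u(y)=u(\rho y)$ has Taylor coefficients $\tilde t_\nu=\rho^\nu t_\nu$. It then suffices to prove, under \iref{thetain} and $\sup_{|\alpha|\le k-1}\|\sum_j|D^\alpha\psi_j|\|_{L^\infty}<\infty$, that $\sum_{\nu\in\cF}\|t_\nu\|_{W^k}^2<\infty$. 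For the inductive step I would assume the result at order $k-1$, observing that all its hypotheses (with $f\in H^{k-3}$, domain $C^{k-3,1}$, $\bar a,\psi_j\in W^{k-2,\infty}$) are weaker than those at order $k$. The statement I actually carry through the induction is the sharper geometric decay of the level sums, $\sum_{|\nu|=n}\|t_\nu\|_{W^l}^2\le C_l\,\delta_l^{\,n}$ with $\delta_l<1$ (and likewise for the $V$-norm), exactly as produced for the $V$ and $W$ norms in the proof of Theorem \ref{theotaylorwH2}; combined with the domain regularity $C^{k-2,1}$, which gives $\|\cdot\|_{W^{k-1}}\simeq\|\cdot\|_{H^{k-1}}$, this also yields geometric decay of $\sum_{|\nu|=n}\|t_\nu\|_{H^{k-1}}^2$. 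A preliminary recursion on $|\nu|$ using \iref{DeltRec} together with elliptic regularity on the $C^{k-2,1}$ domain shows along the way that $t_\nu\in W^k$, so all the quantities below are finite.

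The core estimate would come from differentiating the strong-form recursion \iref{DeltRec}. For each multi-index $\alpha$ with $|\alpha|=k-2$ I would apply $D^\alpha$ to $-\bar a\Delta t_\nu=\nabla\bar a\cdot\nabla t_\nu+\sum_{j\in\mathrm{supp}(\nu)}(\psi_j\Delta t_{\nu-e_j}+\nabla\psi_j\cdot\nabla t_{\nu-e_j})$, expand by the Leibniz rule, and test against $D^\alpha\Delta t_\nu$. As in the base case the quantity to control is $C_n:=\sum_{|\nu|=n}\sum_{|\alpha|=k-2}\int_D\bar a\,|D^\alpha\Delta t_\nu|^2$, which is equivalent to $\sum_{|\nu|=n}\|t_\nu\|_{W^k}^2$. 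The decisive term is the one in which all derivatives fall on $\Delta t_{\nu-e_j}$ and none on $\psi_j$, namely $\sum_j\psi_j\,D^\alpha\Delta t_{\nu-e_j}$: pairing it with $D^\alpha\Delta t_\nu$ and using Young's inequality together with \iref{thetain} produces the contraction $\tfrac\theta2 C_n+\tfrac\theta2 C_{n-1}$, precisely reproducing the mechanism of the base case. The top-order contributions of $\nabla\bar a\cdot\nabla t_\nu$ and of the gradient terms $\nabla\psi_j\cdot\nabla t_{\nu-e_j}$ are absorbed by an $\varepsilon$-Young inequality into $\varepsilon B_k\,C_n$ plus lower-order remainders, where $B_k:=\sup_{|\alpha|\le k-1}\|\,|D^\alpha\bar a|+\sum_j|D^\alpha\psi_j|\,\|_{L^\infty}<\infty$ by \iref{sumderrho}.

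All remaining Leibniz terms carry at least one derivative onto a coefficient $\bar a$ or $\psi_j$ and hence involve strictly fewer derivatives on the solution factors. They split into two classes: terms in which derivatives hit $\bar a$ (including the left-side expansion $D^\beta\bar a\,D^{\alpha-\beta}\Delta t_\nu$ with $\beta\ne0$) involve $\|t_\nu\|_{H^{k-1}}$ at the same index and level, while terms in which derivatives hit $\psi_j$ involve $\|t_{\nu-e_j}\|_{H^{k-1}}$, summed over $j$ against the weights $\|\sum_j|D^\gamma\psi_j|\|_{L^\infty}$ controlled by \iref{sumderrho}. By the inductive hypothesis both classes are bounded by geometrically decaying sequences in $n$. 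One thus arrives at a recursion $C_n\le\tau C_{n-1}+\Gamma\,\delta_{k-1}^{\,n}$ with $\tau=\theta/(2-\theta-2\varepsilon B_k)<1$ after choosing $\varepsilon$ small, and the same induction-on-$n$ argument as at the end of the proof of Theorem \ref{theotaylorwH2} (choosing $\delta$ with $\max\{\tau,\delta_{k-1}\}<\delta<1$) gives $C_n\le C\delta^n$ and hence the claimed summability. The main obstacle is purely organizational: carefully enumerating the Leibniz terms and verifying that the unique non-absorbable, non-decaying contribution is the self-interaction above, so that the contraction structure of the base case survives intact at every order; the weighted sum condition \iref{sumderrho} is exactly what keeps the coefficient-derivative factors summable over $j$.
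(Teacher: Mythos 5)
Your proposal is correct and follows essentially the same route as the paper: reduction to $\rho_j=1$ by rescaling, induction on $k$ with geometric decay of the level sums $\sum_{|\nu|=n}\|t_\nu\|_{W^l}^2$ as the inductive statement, the Leibniz-expanded strong form tested against $\Delta D^\alpha t_\nu$, absorption of the top-order terms via $\varepsilon$-Young and \iref{sumderrho}, control of the lower-order terms through the $H^{k-1}$ level sums obtained from the induction hypothesis and elliptic regularity on the $C^{k-2,1}$ domain, and the same final recursion $C_n\leq\tau C_{n-1}+\Gamma\delta_{k-1}^n$. The only cosmetic difference is the order of differentiation (the paper applies $D^\alpha$ to the strong form of the PDE first and then $\partial^\nu$ at $y=0$, justified by a separate elliptic-regularity lemma, whereas you apply $D^\alpha$ to \iref{DeltRec}), which yields the identical identity since the $x$- and $y$-derivatives commute.
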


In proving this result, we use the following fact on elliptic regularity for a single instance
of the elliptic PDE \iref{ellip}. Here, we use $D^\alpha$ to denote the partial derivative
of order $\alpha$ in the space variable $x$, in order to avoid confusion with the partial
derivative $\partial^\nu$ with respect to the parametric variable $y$.

\begin{lemma}
Let $k\geq 2$ be an integer and assume that $a\in W^{k-1,\infty}(D)$ is a strictly positive function, 
$f\in H^{k-2}(D)$ and that the domain $D$ has $C^{k-2,1}$ smoothness. Then it holds that $u\in H^{k-1}(D)$ and
that for all $|\alpha|\leq k-2$, the equation
\be
\label{strongalpha}
-a\Delta D^\alpha u-\sum_{\beta\leq \alpha} {\alpha\choose \beta} \nabla D^\beta a \cdot \nabla D^{\alpha-\beta}Êu
-\sum_{\beta\leq \alpha, \beta \neq 0} {\alpha\choose \beta} D^\beta a \Delta D^{\alpha-\beta}Êu
= D^\alpha f,
\ee
holds in $L^2(D)$, where ${\alpha\choose \beta}$ denotes the usual binomial coefficient of multi-indices.
\end{lemma}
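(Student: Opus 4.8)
The plan is to obtain \iref{strongalpha} by repeatedly differentiating the strong form of the equation and controlling the resulting terms by a bootstrap in $|\alpha|$. First I would record the $H^{k-1}$ regularity of $u$ as the structural input: under the stated hypotheses that $D$ has $C^{k-2,1}$ smoothness, $a\in W^{k-1,\infty}(D)$ is strictly positive and $f\in H^{k-2}(D)$, classical elliptic regularity theory for divergence-form operators (as in \cite{Gr}) yields $u\in H^{k-1}(D)$; here the domain smoothness $C^{k-2,1}$ is the limiting factor, one order below the $C^{k-1,1}$ that would be needed to reach $H^k$. This membership $u\in H^{k-1}(D)$ is what makes all the gradient terms appearing below legitimate elements of $L^2(D)$.

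Next I would pass to the strong form. Since $a\in W^{1,\infty}(D)$ is strictly positive and $f\in H^{k-2}(D)\subset L^2(D)$, the weak formulation is equivalent to \iref{strongellip}, namely $-a\Delta u-\nabla a\cdot\nabla u=f$ as an identity in $L^2(D)$. The identity \iref{strongalpha} is then exactly what one obtains by applying $D^\alpha$ to this equation and expanding by the Leibniz rule: using that constant-coefficient operators commute, $D^\alpha(a\Delta u)=\sum_{\beta\leq\alpha}\binom{\alpha}{\beta}D^\beta a\,\Delta D^{\alpha-\beta}u$ and $D^\alpha(\nabla a\cdot\nabla u)=\sum_{\beta\leq\alpha}\binom{\alpha}{\beta}\nabla D^\beta a\cdot\nabla D^{\alpha-\beta}u$, and isolating the $\beta=0$ contribution $a\Delta D^\alpha u$ of the first sum produces precisely the left-hand side of \iref{strongalpha}.

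The core of the proof is an induction on $n=|\alpha|$, running from $0$ to $k-2$, with the inductive claim that $\Delta D^\alpha u\in L^2(D)$ and that \iref{strongalpha} holds in $L^2(D)$. For the base case $\alpha=0$ the identity is \iref{strongellip} itself, and $\Delta u=-a^{-1}(f+\nabla a\cdot\nabla u)\in L^2(D)$ because $a^{-1},\nabla a\in L^\infty(D)$, $\nabla u\in L^2(D)$ and $f\in L^2(D)$. For the inductive step at level $n$, I would rearrange the differentiated equation to read $a\Delta D^\alpha u=-D^\alpha f-\sum_{0\neq\beta\leq\alpha}\binom{\alpha}{\beta}D^\beta a\,\Delta D^{\alpha-\beta}u-\sum_{\beta\leq\alpha}\binom{\alpha}{\beta}\nabla D^\beta a\cdot\nabla D^{\alpha-\beta}u$ and check that every term on the right lies in $L^2(D)$: one has $D^\alpha f\in L^2(D)$ since $|\alpha|\leq k-2$; the coefficients $D^\beta a$ and $\nabla D^\beta a$ are bounded because $|\beta|\leq k-2$ and $|\beta|+1\leq k-1$ with $a\in W^{k-1,\infty}(D)$; the gradients $\nabla D^{\alpha-\beta}u$ lie in $L^2(D)$ by the $H^{k-1}$ regularity established above; and, crucially, the factors $\Delta D^{\alpha-\beta}u$ with $\beta\neq 0$ lie in $L^2(D)$ by the induction hypothesis, since then $|\alpha-\beta|<n$. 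Dividing by $a$, which is bounded below by $\essinf a>0$, then yields $\Delta D^\alpha u\in L^2(D)$ and closes the induction.

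I expect the main obstacle to be the rigorous justification that the purely formal Leibniz expansion of $D^\alpha$ applied to the $L^2$ identity \iref{strongellip} is an honest equality in $L^2(D)$, rather than merely in the sense of distributions. The resolution is built into the induction: a priori $\Delta D^\alpha u$ is only a distribution, but the rearranged right-hand side is shown term by term to belong to $L^2(D)$, which forces $\Delta D^\alpha u\in L^2(D)$ and legitimizes the identity; the well-definedness of each product is guaranteed by pairing an $L^\infty$ coefficient factor (from $a\in W^{k-1,\infty}(D)$) with an $L^2$ factor (from $u\in H^{k-1}(D)$ and the lower-order induction hypothesis). Care is needed to track exactly which order of derivative of $a$ and of $u$ occurs in each summand, which is the bookkeeping role played by the restriction $|\alpha|\leq k-2$ and by the split of the first sum at $\beta=0$.
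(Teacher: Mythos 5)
Your proof is correct, but it is organized differently from the paper's. The paper proceeds by induction on $k$: assuming the lemma at order $k-1$, the identities \iref{strongalpha} for $|\alpha|\leq k-3$ give $\Delta u\in H^{k-3}(D)$, and then the shift theorem for the \emph{Poisson} equation on a $C^{k-2,1}$ domain (Grisvard, Theorem 2.5.1.1) upgrades this to $u\in H^{k-1}(D)$; after that, a single application of the Leibniz rule to \iref{strongellip} yields \iref{strongalpha} with all terms in $L^2(D)$. You instead obtain $u\in H^{k-1}(D)$ in one stroke by citing elliptic regularity for the variable-coefficient divergence-form operator, and then run an induction on $|\alpha|$ to validate the identities in $L^2(D)$. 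Both routes are sound. What the paper's version buys is self-containedness: it only ever invokes regularity for the Laplacian, which is consistent with its definition of the spaces $W^k$ via $\Delta$, and the variable-coefficient part is bootstrapped by hand through the induction on $k$; your version relies on the (true, but stronger) black box of $H^{k-1}$ regularity for $-\divergence(a\nabla\cdot)$ with $a\in W^{k-1,\infty}(D)$ on a $C^{k-2,1}$ domain, so you would need to make sure the cited theorem covers exactly that setting. What your version buys is a cleaner treatment of the point both proofs gloss over, namely why the Leibniz expansion is an identity in $L^2(D)$ rather than merely in distributions: your rearrangement of the differentiated equation, with every right-hand-side term exhibited in $L^2(D)$ (using $u\in H^{k-1}(D)$ for the gradient terms and for $\Delta D^{\alpha-\beta}u$ with $\beta\neq 0$, since $|\alpha-\beta|+2\leq k-1$) and division by $a\geq\essinf a>0$, is precisely the right way to force $\Delta D^\alpha u\in L^2(D)$, whose order-$k$ derivatives are not covered by $u\in H^{k-1}(D)$ alone.
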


\noindent
{\bf Proof:} We proceed by induction on $k$. The case $k=2$ is just the strong form \iref{strongellip}.
For $k>2$, assuming the result at order $k-1$, we find that $D^\alpha\Delta u\in L^2$ for all $|\alpha|\leq k-3$,
or equivalently $\Delta u\in H^{k-3}(D)$. Since $D$ has $C^{k-2,1}$ smoothness,
and $f\in  H^{k-2}(D)\subset H^{k-3}(D)$, classical elliptic regularity theory \cite[Theorem 2.5.1.1]{Gr} 
implies that $u\in H^{k-1}(D)$. Then, since $a\in W^{k-1,\infty}(D)$ we may use the Leibniz rule
when applying $D^\alpha$ to \iref{strongellip} for $|\alpha|\leq k-2$ and we obtain \iref{strongalpha}
where each term belongs to $L^2(D)$.
\hfill $\Box$
\nl
\nl
{\bf Proof of Theorem \ref{theotaylorwHm}:} We proceed by induction on $k$,
reducing again to the case where $\rho_j=1$ for all $j$, as in the proof of Theorem \ref{theotaylorwH2} which 
gives the case $k=2$. Since the arguments in the induction step are in essence similar
to those used in the proof of this previous theorem, up to more involved notations, we mainly 
sketch them. 

For a given order $\alpha$ of spatial differentiation, we introduce
\be
c_{\nu}^\alpha:=\int_D \bar a |\Delta D^\alpha t_\nu|^2,
\ee
and prove that $\sum_{\nu\in \cF}c_\nu^\alpha<\infty$ for all $|\alpha|=k-2$. For this purpose, we introduce the quantities
\be
C_n^\alpha:=\sum_{|\nu|=n} c_\nu^\alpha,
\ee
and prove by induction on $k$ that 
\be
C_n^\alpha\leq M_{|\alpha|} \kappa_{|\alpha|}^n, \quad n\geq 0,
\ee
for some constant $M_k$ and $0<\kappa_k<1$ that can be assumed to be non-decreasing with $k$. For $k=2$, 
this was established in the proof of Theorem \ref{theotaylorwH2}. 
Note that the induction assumption for all $|\alpha|\leq k-3$, combined with the 
fact that $D$ has $C^{k-2,1}$ smoothness, also means, by elliptic regularity, that
\be
\sum_{|\nu|=n} \|t_\nu\|_{H^{k-1}(D)}^2 
=\sum_{|\nu|=n} \sum_{|\beta| \leq k-1}\|D^\beta t_\nu\|_{L^2(D)}^2\leq M_{k-1} \kappa_{k-1}^n,
\label{induct}
\ee
up to a multiplicative change in the constants $M_{k-1}$.

Using the strong form \iref{strongalpha} for $a=a(y)$ and $u=u(y)$, and applying
$\frac 1 {\nu !}\partial^\nu$ at $y=0$, for any $\nu\neq 0$, gives
\begin{align*}
-\bar a \Delta D^\alpha t_\nu & = \sum_{\beta\leq \alpha, \beta \neq 0} {\alpha\choose \beta} D^\beta \bar a \Delta D^{\alpha-\beta}Êt_\nu
+\sum_{\beta\leq \alpha} {\alpha\choose \beta}\nabla D^\beta \bar a \cdot \nabla D^{\alpha-\beta} t_\nu \\
&
+\sum_{\beta\leq \alpha} {\alpha\choose \beta}\sum_{j\in {\rm supp}(\nu)} (D^\beta\psi_j \Delta D^{\alpha-\beta}t_{\nu-e_j}+\nabla D^\beta\psi_j\cdot \nabla D^{\alpha-\beta}t_{\nu-e_j}).
\end{align*}
Integrating against $\Delta D^\alpha t_\nu$, and applying Young's inequality we find that, for any $\e >0$,
\begin{align*}
c_\nu^\alpha &\leq 
\e \sum_{\beta\leq \alpha, \beta \neq 0} {\alpha\choose \beta}\int_D  | D^\beta \bar  a| |\Delta D^\alpha t_{\nu}|^2+
\frac 1 {4\e}
\sum_{\beta\leq \alpha, \beta \neq 0} {\alpha\choose \beta} \int_D | D^\beta  \bar a| |\Delta D^{\alpha-\beta}t_{\nu}|^2 \\
& +\e
\sum_{\beta\leq \alpha} {\alpha\choose \beta} \int_D |\nabla D^\beta  \bar a| |\Delta D^\alpha t_{\nu}|^2
+\frac 1 {4\e}
\sum_{\beta\leq \alpha} {\alpha\choose \beta} \int_D |\nabla D^\beta  \bar a| |\nabla D^{\alpha-\beta}t_{\nu}|^2\\
& +\frac 1 2\sum_{j\in {\rm supp}(\nu)} \int_D |\psi_j| |\Delta D^\alpha t_{\nu}|^2+\frac 1 2\sum_{j\in {\rm supp}(\nu)} \int_D |\psi_j| |\Delta D^\alpha t_{\nu-e_j}|^2\\
&+ \e\sum_{\beta\leq \alpha, \beta \neq 0} {\alpha\choose \beta}\sum_{j\in {\rm supp}(\nu)} \int_D |D^\beta \psi_j| |\Delta D^\alpha t_{\nu}|^2
+ \frac 1 {4\e}\sum_{\beta\leq \alpha, \beta \neq 0} {\alpha\choose \beta}\sum_{j\in {\rm supp}(\nu)} \int_D |D^\beta \psi_j| |\Delta D^{\alpha-\beta} t_{\nu-e_j}|^2\\
& +\e
\sum_{\beta\leq \alpha} {\alpha\choose \beta}\sum_{j\in {\rm supp}(\nu)} \int_D |\nabla D^\beta \psi_j| |\Delta D^{\alpha}t_{\nu}|^2 +
\frac 1 {4\e}
\sum_{\beta\leq \alpha} {\alpha\choose \beta}\sum_{j\in {\rm supp}(\nu)} \int_D |\nabla D^\beta \psi_j| |\nabla D^{\alpha-\beta}t_{\nu-e_j}|^2\\
& = T_1+\cdots + T_{10}.
\end{align*}
After summation over $|\nu|=n$, the left side is $C^\alpha_n$ and the contribution of the odd numbered terms 
is bounded according to
\be
\sum_{|\nu|=n} (T_1+T_3+T_5+T_7+T_9) \leq \left(\frac \theta 2+B\e \right) C^\alpha_n,
\ee
where 
\be
B:=\Big \|\sum_{\beta\leq \alpha} {\alpha\choose \beta}(|\nabla D^\beta a|+\sum_{j\geq 1}|\nabla D^\beta \psi_j|)
+\sum_{\beta\leq \alpha, \beta \neq 0} {\alpha\choose \beta}(|D^\beta a|+\sum_{j\geq 1}|D^\beta \psi_j|)\Big \|_{L^\infty}<\infty.
\ee
The contribution of $T_6$ is bounded by
\be
\sum_{|\nu|=n} T_6 \leq \frac \theta 2 C^\alpha_{n-1}.
\ee
The contributions of the remaining terms can be bounded by
\be
\sum_{|\nu|=n} (T_2+T_4+T_8+T_{10}) \leq A \sum_{|\beta|\leq k-1} \Bigl( \sum_{|\nu|=n} \|D^\beta t_\nu\|_{L^2}^2+ \sum_{|\nu|=n-1} \|D^\beta t_\nu\|_{L^2}^2\Bigr),
\ee
where $A$ is a finite constant that depends on $\e$, $\bar a$, $\psi_j$, $k$ and $m$. From the induction assumption \iref{induct} we thus have
\be
\sum_{|\nu|=n} (T_2+T_4+T_8+T_{10}) \leq A M_{k-1} (\kappa_{k-1}^n+\kappa_{k-1}^{n-1}).
\ee
We take $\e>0$ small enough so that $\frac \theta 2+B\e<1$ and therefore obtain that
\be
C^\alpha_n \leq \kappa C^\alpha_{n-1}+\frac{ 2A M_{k-1}}{2-\theta-2B\e} (1+\kappa_{k-1}^{-1}) \kappa_{k-1}^n.
\quad \kappa:=\frac {\theta}{2-\theta-2B\e} <1.
\ee
By a similar argument as in the proof of Theorem \ref{theotaylorwH2}, this yields the existence of 
constants $M_k$ and $\kappa_{k-1}\leq \kappa_k<1$ such that
\be
C^\alpha_n\leq M_k\kappa_k^n, \quad n\geq 0,
\ee
which is the claimed estimate at order $k$. \hfill $\Box$

\subsection{Fractional and weighted regularity}

The summability results expressed by Theorem \ref{theotaylorHm} 
are derived for Sobolev spaces of integer order. In view of the induction argument
on $k$ used in the proof, we cannot treat fractional Sobolev spaces 
in a similar way. However, such spaces are well known to be obtainable
by interpolation of Sobolev spaces of integer order. For example,
when using the real interpolation method, for $0<\theta<1$ we have
\be
H^{1+\theta (k-1)}(D)=[H^1(D),H^k(D)]_{\theta,2},
\ee
as well as
\be
V\cap H^{1+\theta (k-1)}(D)=[V,V\cap H^k(D)]_{\theta,2}.
\ee
More generally, for any $0<\theta<1$ and $0<r\leq \infty$, we may consider
the real interpolation space
$[V,W^k]_{\theta,r}$, or the complex interpolation space
$[V,W^k]_{\theta}$. We refer to \cite{BL} for a general introduction to interpolation spaces.
If $Z$ is any of these spaces, we have the
interpolation inequality 
\be
\|t_\nu\|_{Z}\leq C\|t_\nu\|_{V}^{1-\theta} \|t_\nu\|_{W^k}^{\theta}.
\label{interineq}
\ee

The following result generalizes Theorem \ref{theotaylorHm} 
to these smoothness spaces.

\begin{cor}
\label{theotaylorfrac}
Assume that $\bar a\in L^\infty(D)$  is such that $\essinf \bar a>0$, and
that there exists a sequence $\rho=(\rho_j)_{j\geq 1}$ of positive numbers
such that \iref{uearho} holds.  Assume in addition that the right side $f$ in \iref{ellip}
belongs to $H^{k-2}(D)$, that the domain $D$ has $C^{k-2,1}$ smoothness,
and that $\bar a$ and all functions $\psi_j$
belong to $W^{k-1,\infty}(D)$. 
Assume that with a sequence $\bar \rho=(\bar \rho_j)_{j\geq 1}$ 
of positive numbers, % such that $\bar \rho_j\leq \rho_j$,
\be
\sup_{|\alpha|\leq k-1} 
\Bigl\| \sum_{j\geq 1} \bar\rho_j |D^\alpha \psi_j| \Bigr\|_{L^\infty} 
<\infty\;.
\label{sumderrhomod}
\ee
Then for any $0<\theta<1$ and $0<r\leq \infty$ we have
\be
\sum_{\nu\in\cF} (\tilde\rho^\nu\|t_\nu\|_{Z})^2 <\infty,\quad   \tilde \rho^\nu:=\prod_{j\geq 1}  \tilde \rho_j^{\nu_j},
\ee
where $Z=[V,W^k]_{\theta,r}$ or $Z=[V,W^k]_{\theta}$, 
and $\tilde \rho_j=\rho_j^{1-\theta}\bar \rho_j^\theta$.
\end{cor}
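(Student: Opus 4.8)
\noindent\textbf{Proof plan for Corollary \ref{theotaylorfrac}.}
The plan is to derive the weighted $\ell^2$ bound for $Z$ by interpolating, term by term, between a weighted $\ell^2$ bound in $V$ carrying the ellipticity weight $\rho$ and a weighted $\ell^2$ bound in $W^k$ carrying the regularity weight $\bar\rho$, and then to decouple the two factors by H\"older's inequality. First I would record the two endpoint estimates. On the one hand, the weighted $V$-bound
\be
\sum_{\nu\in\cF}(\rho^\nu\|t_\nu\|_V)^2<\infty
\ee
holds using only the uniform ellipticity \iref{uearho} with weight $\rho$; it is exactly the summability of the quantities $D_n$ obtained in the first part of the proof of Theorem \ref{theotaylorwH2} (equivalently the affine $V$-estimate of \cite{BCM}). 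On the other hand, the weighted $W^k$-bound
\be
\sum_{\nu\in\cF}(\bar\rho^\nu\|t_\nu\|_{W^k})^2<\infty
\ee
is the conclusion of Theorem \ref{theotaylorwHm} applied with $\bar\rho$ in the role of the weight: its smoothness hypotheses on $f$, $D$, $\bar a$ and the $\psi_j$ are exactly those assumed here, and its derivative condition \iref{sumderrho} becomes precisely \iref{sumderrhomod}.

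Granting these two endpoints, the conclusion follows quickly. Writing $\tilde\rho^\nu=(\rho^\nu)^{1-\theta}(\bar\rho^\nu)^\theta$ and using the interpolation inequality \iref{interineq}, I would estimate
\be
\tilde\rho^\nu\|t_\nu\|_Z\leq C\,(\rho^\nu\|t_\nu\|_V)^{1-\theta}(\bar\rho^\nu\|t_\nu\|_{W^k})^\theta .
\ee
Squaring, summing over $\nu\in\cF$, and applying H\"older's inequality with the conjugate exponents $\tfrac1{1-\theta}$ and $\tfrac1\theta$ then gives
\be
\sum_{\nu\in\cF}(\tilde\rho^\nu\|t_\nu\|_Z)^2\leq C^2\Big(\sum_{\nu\in\cF}(\rho^\nu\|t_\nu\|_V)^2\Big)^{1-\theta}\Big(\sum_{\nu\in\cF}(\bar\rho^\nu\|t_\nu\|_{W^k})^2\Big)^{\theta}<\infty ,
\ee
which is the claim for each of the admissible spaces $Z=[V,W^k]_{\theta,r}$ and $Z=[V,W^k]_\theta$, since \iref{interineq} is available for all of them.

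The step I expect to be the real obstacle is securing the second endpoint, that is, verifying that Theorem \ref{theotaylorwHm} may legitimately be invoked with the weight $\bar\rho$. Its derivative hypothesis is granted by \iref{sumderrhomod}, but the theorem also requires the uniform ellipticity \iref{uearho} to hold for $\bar\rho$, whereas we have assumed it only for $\rho$. This is harmless in the regime the corollary is meant for: since the spatial derivatives $|D^\alpha\psi_j|$ dominate $|\psi_j|$, finiteness of $\sum_j\bar\rho_j|D^\alpha\psi_j|$ forces $\bar\rho$ to be no larger than an admissible ellipticity weight, so one may take $\bar\rho_j\leq\rho_j$, whence $\|\sum_j\bar\rho_j|\psi_j|/\bar a\|_{L^\infty}\leq\|\sum_j\rho_j|\psi_j|/\bar a\|_{L^\infty}<1$ and $\bar\rho$-ellipticity is inherited from $\rho$. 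If $\bar\rho$ is not dominated by $\rho$, replacing $\bar\rho$ by $\min(\rho,\bar\rho)$ preserves \iref{sumderrhomod} and restores ellipticity, at the cost of weakening the conclusion to the weight $\rho^{1-\theta}\min(\rho,\bar\rho)^\theta$; the stated weight $\tilde\rho$ is thus attained exactly in the natural case $\bar\rho\leq\rho$. Note that no $\ell^q$ summability of $\rho$ or $\bar\rho$ is needed here, as the corollary asserts only the weighted $\ell^2$ bound and not $\ell^p$ summability.
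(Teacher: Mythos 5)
Your argument is the paper's own proof: the two weighted $\ell^2$ endpoint bounds in $V$ and $W^k$, the termwise interpolation inequality \iref{interineq}, and H\"older's inequality with exponents $\tfrac1{1-\theta}$ and $\tfrac1\theta$ appear verbatim in the paper, so the approach is identical. The one point worth commenting on is precisely the obstacle you flag: the paper's proof asserts $\sum_{\nu}(\bar\rho^{\nu}\|t_\nu\|_{W^k})^2<\infty$ ``by our assumptions'' without addressing that Theorem \ref{theotaylorwHm}, invoked with weight $\bar\rho$, formally requires the ellipticity condition \iref{uearho} for $\bar\rho$ as well, which the corollary does not assume. Your concern is legitimate and not merely formal — condition \iref{sumderrhomod} with $\alpha=0$ only gives $\bigl\|\sum_j\bar\rho_j|\psi_j|\bigr\|_{L^\infty}<\infty$, not $<\essinf\bar a$, and a one-parameter example in which $\bar\rho_1$ exceeds the radius of convergence of the Taylor series shows the $W^k$ endpoint can genuinely fail for large $\bar\rho$ — so your heuristic that the derivative bounds ``force $\bar\rho$ to be an admissible ellipticity weight'' is not a valid justification. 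Your fallback, replacing $\bar\rho$ by $\min(\rho,\bar\rho)$, which preserves \iref{sumderrhomod} and inherits ellipticity from $\rho$, is the right repair; it yields the stated weight $\tilde\rho$ exactly in the regime $\bar\rho_j\lesssim\rho_j$ (up to finitely many indices, which are harmless), which is the regime of the intended application in \S\ref{sec:sobolevregexample} where $\bar\rho_j\to 0$. Apart from this caveat, which applies equally to the paper's own proof, your argument is complete and correct, including the observation that no $\ell^q$ summability of the reciprocal weights is needed for the weighted $\ell^2$ conclusion itself.
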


\begin{proof}
By our assumptions, we have
\be
\sum_{\nu\in\cF} (\rho^{\nu} \|t_\nu\|_{V})^2 <\infty \quad\text{and}\quad 
\sum_{\nu\in\cF} (\bar \rho^{\nu} \|t_\nu\|_{W^k})^2 <\infty.
\ee
Using the interpolation inequality \iref{interineq}, we obtain
\be
\sum_{\nu\in\cF} (\tilde\rho^\nu\|t_\nu\|_{Z})^2 \leq C^2\sum_{\nu\in\cF} (\rho^\nu\|t_\nu\|_{V})^{2(1-\theta)} (\bar\rho^\nu\|t_\nu\|_{W^k})^{2\theta}
<\infty,
\ee
by H\"older's inequality.
\end{proof}

Note that Corollary \ref{theotaylorfrac} can be used to derive $(\norm{t_\nu}_Z)_{\nu\in\cF} \in \ell^p(\cF)$ for some $p\in]0,2[$ even in cases where \eqref{sumderrhomod} requires $\bar\rho_j\to 0$, as long as the resulting interpolated weights $\tilde\rho_j$ still satisfy $\tilde \rho_j>1$ and $(\tilde\rho_j^{-1})_{j\geq 1} \in \ell^q(\N)$ for some $q>0$. We apply the result in this manner in \S\ref{sec:sobolevregexample}.

As already noted, the spaces $W^k$ 
coincide with the Sobolev space $V\cap H^k$ only when the domain $D$
has $C^{k-1,1}$-smooth boundary. In addition, Theorem \ref{theotaylorwHm}
requires that $D$ has at least $C^{k-2,1}$-smooth boundary. 
One way to circumvent this limitation is to search for 
analogous results when the spaces 
$W^k$ are replaced by suitable \emph{weighted Sobolev spaces}.
Such spaces are particularly relevant to the case where $D$ is a
polygon or polyhedron, possibly with re-entreant corners.
We discuss the adaptation of Theorems  
\ref{theotaylorwH2} and \ref{theotaylorwHm} to this situation.

To simplify the exposition, we confine the discussion to space dimension $m=2$
and shall assume in the following 
that $D\subset \R^2$ is a bounded polygon with $J$  vertices.
For $j=1,\dots,J$, we denote by $\bc_j$ these vertices
and define for any $x\in D$ the truncated distances
\be
r_j(x):= 1\wedge {\rm dist}(x,\bc_j).
\ee
For $\beta\in\R$, we define
\be
\Phi_\beta(x) := \prod_{j\geq 1} r_j(x)^{\beta}.
\ee
For any integer $k\geq 0$ and $\theta \in \R$, 
we define the Kondrat'ev spaces
\be
K^k_\theta(D) 
:= \{ u \; : \;  \Phi_{|\alpha|-\theta} \partial_x^\alpha u \in L^2(D):\;
|\alpha| \leq k \}.
\ee
For $k=0$, we also write $L^2_\theta(D) =K^0_\theta(D)$.
These weighted spaces are relevant to us for two reasons. First, 
the Dirichlet problem for the Poisson equation in a polygon $D$
admits a shift theorem in these weighted spaces: for all $k\geq 2$, 
there exists $\eta>0$ depending on $D$ and a constant $C$ depending on $D$ and $k$,
such that, for $|\theta|<\eta$,
\begin{equation}
\label{eq:Kshift}
\| u \|_{K^k_{\theta+1}} \leq C \| \Delta u\|_{K^{k-2}_{\theta-1}}\;.
\end{equation}
Second, the approximation property \iref{spatialappn}
remains valid for the space $X=K^k_{\theta+1}(D)$ 
with the same rate $t=\frac {k-1}2$
as for $X=H^k(D)$, if we use Lagrange finite element spaces $(V_n)_{n>0}$ 
of degree at least $k-1$ and with appropriate mesh refinement near the corners of $D$
(see \cite{BNZ2005,GaspMorin09}, \cite[Eqn. (0.3)]{AN2015} and the references there).
In the case $k=2$, it is known that $W\subset K^2_{\theta+1}(D)$ for $|\theta|$ small enough.
Specifically, for the homogeneous Dirichlet boundary conditions presently considered,
when $D$ is a polygon with straight sides, $|\theta| < \pi/\omega_{max}$ where
$\omega_{max} \in ]0,2\pi]$ denotes the maximal interior opening angle at the corners
of $D$.
Therefore, the same assumptions as those in Theorem \ref{theotaylorH2} imply the result
\be
\sum_{\nu\in\cF} (\rho^{\nu} \|t_\nu\|_{K^2_{\theta+1}})^2 <\infty, 
\label{taylorwK2}
\ee
in the case where $D$ is a polygon. 
One can adapt the proof of Theorem \ref{theotaylorH2} in order to show that \iref{taylorwK2}
holds under the assumption
\be
 \left\| \sum_{j\geq 1} \rho_j \Phi_\gamma|\nabla \psi_j| \right\|_{L^\infty} <\infty,
\ee
where $\gamma=\max\{1-\theta,0\}$. 
For $| \theta | < 1$ 
(e.g., for polygonal domains $D$ with reentrant corners),
this assumption is slightly weaker than \iref{sumgradrho} due to the presence of 
weight functions $\Phi_\gamma$ which vanish at the corners of the domain, 
thereby allowing the $\psi_j$ to be singular at these points.
For $k>2$, we conjecture that, under similar assumptions, Theorem \ref{theotaylorwHm}
extends to a polygonal domain with $W^k$ replaced by $K^k_{\theta+1}(D)$
for $|\theta|$ small enough.
\section{Jacobi expansions}
We now prove Theorem \ref{theojacobiHm}, transferring the results for Taylor expansions 
in the previous sections to Jacobi series. 
The corresponding univariate Jacobi polynomials $(J_k)_{k\geq 0}$ 
are orthonormal with respect to the probability measure $d_{\alpha,\beta}(t)\,dt$ 
on $[-1,1]$ with $\alpha,\beta >-1$, where
\begin{equation}
d_{\alpha,\beta}(t) = \frac{\Gamma(\alpha + \beta + 2)}{2^{\alpha + \beta + 1}  
\Gamma(\alpha+1) \Gamma(\beta+1) } (1- t)^{\alpha} (1+ t)^{\beta}.
\end{equation}
For the corresponding orthonormal polynomials, one has the Rodrigues' formula
\begin{equation}\label{jacobirodrigues}
	J_k(t) = \frac{c^{\alpha,\beta}_{k} }{ k! \, 2^{k} } 
     (1 - t)^{-\alpha} ( 1+ t)^{-\beta} \,\frac{d^{k}}{d t^k} 
       \bigl( (t^2 - 1)^{k} (1-t)^{\alpha} (1+t)^{\beta} \bigr),
\end{equation}
where $c^{\alpha,\beta}_0 = 1$ and
\begin{equation}\label{normalizationfactor}
	   c^{\alpha, \beta}_{k}  = \sqrt{ \frac{ (2k + \alpha + \beta + 1)\, k!\, \Gamma(k + \alpha + \beta + 1) \,\Gamma(\alpha+1)\, \Gamma(\beta+1) }{ \Gamma(k + \alpha + 1)\, \Gamma(k + \beta + 1) \, \Gamma(\alpha + \beta + 2)} } ,\quad k \in\N.
\end{equation}
Notable special cases are Legendre polynomials for the uniform measure, where $\alpha=\beta=0$, and Chebyshev polynomials, where $\alpha=\beta=-\frac12$. 
On $U$, we consider the product measure
\begin{equation}
  d\sigma(y)  = \bigotimes_{j\geq 1} d_{\alpha,\beta}(y_j)\,dy_j,
\end{equation}
and recall that the tensor product polynomials $J_\nu(y) = \prod_{j\geq 1} J_{\nu_j}(y_j)$, $\nu\in\cF$, are an orthonormal basis of $L^2(U,\sigma)$.
Our aim is now to quantify the summability of $W^k$ norms of Jacobi coefficients
\begin{equation}
    v_\nu = \int_U u(y)\, J_\nu(y)\,d\sigma(y), \quad \nu \in \cF,
\end{equation}
as stated in Theorem \ref{theojacobiHm}. This assertion on $\ell^p$ summability will again be derived from a result on weighted $\ell^2$ summability.

\begin{theorem}\label{jacobiweightedl2}
	Under the assumptions of Theorem \ref{theotaylorH2}
when $k=2$ and of Theorem \ref{theotaylorHm} when $k>2$,
	\begin{equation}
		\sum_{\nu\in\cF} \bigl( a_\nu^{-1} \rho^\nu \norm{v_\nu}_{W^k} \bigr)^2 < \infty, \qquad a_\nu := \prod_{j\geq 1} c^{\alpha,\beta}_{\nu_j}.
	\end{equation}
\end{theorem}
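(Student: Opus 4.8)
The plan is to reduce the weighted $\ell^2$ bound for the Jacobi coefficients to the weighted $\ell^2$ bound for the Taylor coefficients already furnished by Theorem \ref{theotaylorwH2} (for $k=2$) and Theorem \ref{theotaylorwHm} (for $k>2$), but applied at \emph{every} base point $y\in U$ rather than only at $y=0$. The first step is to rewrite $v_\nu$ by means of the Rodrigues formula \iref{jacobirodrigues}. Integrating by parts $\nu_j$ times in each variable (boundary terms vanish since $(t^2-1)^{\nu_j}$ has a zero of order $\nu_j$ at $\pm1$) and noting that the factors $c_{\alpha,\beta}(1-y_j)^\alpha(1+y_j)^\beta$ reassemble the measure $\sigma$, one obtains
\be
v_\nu = \Bigl(\prod_{j\geq1}\frac{c^{\alpha,\beta}_{\nu_j}}{\nu_j!\,2^{\nu_j}}\Bigr)\int_U (\partial^\nu u)(y)\,R_\nu(y)\,d\sigma(y),\qquad R_\nu(y):=\prod_{j\geq1}(1-y_j^2)^{\nu_j}.
\ee
This is exactly where the normalization $a_\nu=\prod_j c^{\alpha,\beta}_{\nu_j}$ plays its role: multiplying by $a_\nu^{-1}\rho^\nu$ cancels the factors $c^{\alpha,\beta}_{\nu_j}$, and writing $\partial^\nu u=\nu!\,t^{(y)}_\nu$ with the local Taylor coefficients $t^{(y)}_\nu:=\frac1{\nu!}\partial^\nu u(y)$ yields
\be
a_\nu^{-1}\rho^\nu\,\|v_\nu\|_{W^k}\leq \Bigl(\prod_{j\geq1}\frac{\rho_j^{\nu_j}}{2^{\nu_j}}\Bigr)\int_U \|t^{(y)}_\nu\|_{W^k}\,R_\nu(y)\,d\sigma(y).
\ee
Note that, unlike in the Taylor proof, one cannot reduce to $\rho_j=1$ here (rescaling $y$ would leave $U$), so the weights are carried throughout.

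Next I would apply Minkowski's integral inequality to interchange the $\ell^2(\cF)$ norm in $\nu$ with the integral over $U$. Setting $g_\nu(y):=\bigl(\prod_j \rho_j^{\nu_j}2^{-\nu_j}\bigr)\|t^{(y)}_\nu\|_{W^k}R_\nu(y)\geq0$, this gives
\be
\Bigl(\sum_{\nu\in\cF}\bigl(a_\nu^{-1}\rho^\nu\|v_\nu\|_{W^k}\bigr)^2\Bigr)^{1/2}\leq \int_U\Bigl(\sum_{\nu\in\cF} g_\nu(y)^2\Bigr)^{1/2}d\sigma(y).
\ee
The decisive point of using Minkowski rather than Cauchy--Schwarz is that it keeps $R_\nu$ \emph{squared}, $R_\nu(y)^2=\prod_j(1-y_j^2)^{2\nu_j}$, so the inner sum is precisely a weighted Taylor sum at base point $y$,
\be
\sum_{\nu\in\cF} g_\nu(y)^2 = \sum_{\nu\in\cF}\bigl(\tilde\rho(y)^\nu\,\|t^{(y)}_\nu\|_{W^k}\bigr)^2,\qquad \tilde\rho_j(y):=\tfrac12\rho_j(1-y_j^2),
\ee
because $\tilde\rho_j(y)^{2\nu_j}=\bigl(\tfrac14\rho_j^2(1-y_j^2)^2\bigr)^{\nu_j}$.

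The engine is then to invoke Theorem \ref{theotaylorwH2} ($k=2$) or Theorem \ref{theotaylorwHm} ($k>2$) at the base point $y$. By the affine structure $a(y+z)=a(y)+\sum_j z_j\psi_j$, the coefficients $t^{(y)}_\nu$ are the Taylor coefficients of the same elliptic problem with background $a(y)$ in place of $\bar a$ and identical fluctuations $\psi_j$, so those theorems apply verbatim with weights $\tilde\rho_j(y)$ once the shifted uniform ellipticity holds. The elementary inequality
\be
\tilde\rho_j(y)+|y_j|=\tfrac12\rho_j(1-y_j^2)+|y_j|\leq \rho_j\qquad(\rho_j\geq1,\ |y_j|\leq1)
\ee
gives $\sum_j\tilde\rho_j(y)|\psi_j|\leq \sum_j\rho_j|\psi_j|-\sum_j|y_j||\psi_j|\leq\theta\bar a-\sum_j|y_j||\psi_j|\leq\theta\,a(y)$, using $a(y)\geq\bar a-\sum_j|y_j||\psi_j|$ and $\theta<1$; hence $\|\sum_j\tilde\rho_j(y)|\psi_j|/a(y)\|_{L^\infty}\leq\theta<1$ for \emph{every} $y\in U$. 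Since in addition $\essinf a(y)\geq(1-\theta)\essinf\bar a>0$, $\tilde\rho_j(y)\leq\rho_j$, and the weighted gradient (resp. higher-derivative) sums \iref{sumgradrho} (resp. \iref{sumderrho}) are preserved under re-centering, all quantities entering the constant of the weighted Taylor estimate are bounded uniformly in $y$. This yields $\sum_{\nu} g_\nu(y)^2\leq C$ for $\sigma$-a.e.\ $y$, and integrating against the probability measure $\sigma$ bounds the right-hand side above by $C^{1/2}$, which is the claim.

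The main obstacle is this last step, namely obtaining a constant \emph{uniform in the base point}. This is precisely what the matched weight $\tilde\rho_j(y)=\frac12\rho_j(1-y_j^2)$ — produced by the squaring of $R_\nu$ in the Minkowski step — is designed to achieve: the factor $(1-y_j^2)$ inherited from the Jacobi weight suppresses exactly those coordinates for which moving the base point away from $0$ would otherwise spoil the ellipticity constant, so that the single value $\theta$ controls all base points simultaneously. For $k>2$ one must further verify that the higher-order hypotheses ($C^{k-2,1}$ regularity of $D$, $f\in H^{k-2}(D)$, and the derivative bounds \iref{sumderrho}) are unchanged by re-centering, which they are, since only the background $a(y)$ varies while the $\psi_j$ are fixed; the induction on $k$ in the proof of Theorem \ref{theotaylorwHm} then runs with $y$-uniform constants, closing the argument.
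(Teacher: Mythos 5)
Your proof is correct and follows essentially the same route as the paper: Rodrigues' formula plus integration by parts, interchange of the $\ell^2$ sum with the $\sigma$-integral, and a uniform-in-$y$ application of Theorems \ref{theotaylorwH2}/\ref{theotaylorwHm} to the problem re-centered at $y$ with background $a(y)$. The only (cosmetic) differences are that you use Minkowski's integral inequality where the paper uses Cauchy--Schwarz against the probability measure, and you keep the $\psi_j$ fixed while absorbing the factor $\tfrac12\rho_j(1-y_j^2)$ into $y$-dependent weights, whereas the paper equivalently rescales to fluctuations $\psi_{y,j}=(1-\abs{y_j})\rho_j\psi_j$ with unit weights.
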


\begin{proof}
  Closely following the proof of \cite[Thm.\ 3.1]{BCM}, for $y, z \in U$, we set $T_y z := \bigl( y_j +  ( 1- \abs{y_j}) \rho_j z_j \bigr)_{j\geq 1}$ and $w_y(z) := u(T_y z)$. 
For each $y\in U$, let $\bar a_y :=a(y)= \bar a + \sum_{j\geq 1} y_j \psi_j$ and $\psi_{y,j} := (1 - \abs{y_j}) \rho_j \psi_j$. Then $w_y$ solves the modified affine-parametric problem
\begin{equation}\label{pdewy}
   \int_D \Bigl(  \bar a_y  + \sum_{j\geq 1} z_j \psi_{y,j} \Bigr) \nabla w_y(z) \cdot \nabla v\,dx =  \int_D f \, v\,dx , \quad v \in V,
\end{equation}
with Taylor coefficients
\begin{equation}\label{jacobitayloridentity}
   t_{y,\nu} := \frac1{\nu!} \partial^\nu w_y(0) = \frac1{\nu!}  \Bigl( \prod_{j \geq 1}  (1 - \abs{y_j})^{\nu_j}  \Bigr)  \rho^\nu \partial^\nu u(y) \,.
\end{equation}
We have the following $y$-uniform bounds: on the one hand,
\begin{equation}\label{jacobiunif1}
  \norm{\bar a_y^{-1}}_{L^\infty(D)}  \leq  \Bignorm{ \Bigl( \bar{a} - \sum_{j\geq 1} | \psi_j|  \Bigr)^{-1} }_{L^\infty(D)} < \infty ,
\end{equation}
as well as
\begin{equation}\label{jacobiunif2}
   \norm{ D^\alpha \bar a_y}_{L^\infty(D)}	\leq  \Bignorm{ \abs{ D^\alpha \bar{a} } + \sum_{j\geq 1} \abs{D^\alpha \psi_j }  }_{L^\infty(D)} < \infty, \quad \abs{\alpha} \leq k - 1;
\end{equation}
on the other hand, 
\begin{equation}\label{jacobiunif3}
\begin{aligned}
 \Biggl\| \frac{\sum_{j\geq 1} \abs{ \psi_{y,j} }}{\bar a_y} \Biggr\|_{L^\infty(D)} 
  &  \leq \Biggl\|  \frac{\sum_{j\geq 1} \rho_j \abs{ \psi_{j} } - \sum_{j\geq 1} \rho_j \abs{y_j} \abs{\psi_j} }{\bar a - \sum_{j\geq 1} \abs{y_j} \abs{\psi_j} } \Biggr\|_{L^\infty(D)}  \\
  &  \leq  \Biggl\|  \frac{\sum_{j\geq 1} \rho_j \abs{ \psi_{j} }  }{\bar a } \Biggr\|_{L^\infty(D)} < 1, 
\end{aligned}
\end{equation}
and
\begin{equation}\label{jacobiunif4}
  \Bignorm{ \sum_{j\geq 1} \abs{ D^\alpha \psi_{y,j} } }_{L^\infty(D)} \leq \Bignorm{ \sum_{j\geq 1} \rho_j \abs{ D^\alpha \psi_j } }_{L^\infty(D)} < \infty , \quad \abs{\alpha}\leq k-1.
\end{equation}
By Theorem \ref{theotaylorwH2} for $k=2$ and Theorem \ref{theotaylorwHm} for $k>2$, we thus obtain
\begin{equation}
	\sum_{\nu\in\cF}  \Bigl\| \frac{1}{\nu!}\partial^\nu w_y(0) \Bigr\|_{W^k}^2 = \sum_{\nu\in\cF} \|t_{y,\nu}\|_{W^k}^2  \leq C < \infty,
	\label{uniy}
\end{equation}
with $C>0$ independent of $y$.
Moreover, by \eqref{jacobirodrigues} and integration by parts,
	\begin{equation}
	 v_\nu = \int_U u(y)\, J_\nu(y)\,d\sigma(y) =  a_\nu \int_U \frac{1}{\nu!} \partial^\nu u(y) \prod_{j\geq 1} \frac{(1-y_j^2)^{\nu_j}}{2^{\nu_j}} d\sigma(y),
	 \end{equation}
and consequently	 
\begin{equation}
\begin{aligned}
  \sum_{\nu\in\cF} a_\nu^{-2} \rho^{2\nu} \| v_\nu\|_{W^k}^2 
   & \leq \int_U \sum_{\nu\in\cF} \rho^{2\nu} \Bigl\| \frac{1}{\nu!}\partial^\nu u(y) \Bigr\|_{W^k}^2 \prod_{j\geq 1} (1 - |y_j|)^{2\nu_j}\frac{ ( 1 + |y_j|)^{2\nu_j} }{2^{2\nu_j}}   \,d\sigma(y)  \\  
   &  \leq  \int_U \sum_{\nu\in\cF}  \Bigl\| \frac{1}{\nu!}\partial^\nu w_y(0) \Bigr\|_{W^k}^2 \,d\sigma(y)   \leq   C,\end{aligned} 
\end{equation}
with $C$ as in \iref{uniy}.
\end{proof}

\begin{proof}[{\rm \textbf{Proof of Theorem \ref{theojacobiHm}}}]
	From Theorem \ref{jacobiweightedl2}, we obtain the $\ell^p$ summability of $(\norm{v_\nu}_{W^k})_{\nu\in\cF}$ using a H\"older estimate 
	analogous to \iref{holder} which was used in the proof of Theorems \ref{theotaylorH2} and \ref{theotaylorHm}, but this time using 
\begin{equation}\label{jacobihoelder}
		\biggl(\prod_{j\geq1} a_\nu \rho_j^{-\nu_j} \biggr)_{\nu\in\cF} \in \ell^q(\cF).
\end{equation}
To see that this holds true under the given assumptions, note first that for some $c,t>0$ depending on $\alpha,\beta$, we have $c^{\alpha,\beta}_k \leq 1 + ck^t$ for $k \in \N_0$, and 
\begin{equation}\label{jacobihoelderfactorization}
	\sum_{\nu\in\cF} \rho^{- q \nu} \prod_{j\geq1} (1 + c\nu_j^t)^{q} 
	= \prod_{j\geq 1} \Bigl( \sum_{k=0}^\infty \rho_j^{-qk} (1 + ck^t)^{q} \Bigr)
	\leq \prod_{j\geq 1} ( 1 + C \rho_j^{-q}),
\end{equation}
where in the last step we have used that $\rho_{\min}:=\min_{j\geq 1} \rho_j > 1$, with $C>0$ depending on $c$, $t$, $q$, and $\rho_{\min}$. The infinite product on the right converges precisely when $(\rho_j^{-1})_{j\geq 1} \in \ell^q(\N)$, and we thus have \eqref{jacobihoelder}.
\end{proof}

\begin{remark}
	The above arguments also apply to tensor product Jacobi measures with variable parameter sequences $(\alpha_j)_{j\geq 1}$, $(\beta_j)_{j\geq 1}$ with $\alpha_j,\beta_j >-1$, that is,
	\begin{equation}
	  d\sigma(y) = \bigotimes_{j\geq 1} d_{\alpha_j,\beta_j}(y_j)\,dy_j,
	\end{equation}
	provided that there exists a fixed polynomial $Q$ such that $c^{\alpha_j,\beta_j}_k \leq Q(k)$, $k\in\N_0$. Since $Q$ can be chosen to satisfy $Q(0)=1$ without loss (up to increasing the degree by one), the estimate \eqref{jacobihoelderfactorization} remains valid also in this case.
\end{remark}

\begin{remark}\label{fracjacobi}
The summability results established in \S 4.3  for the fractional and weighted Sobolev norms
of Taylor coefficients also easily extend to the Jacobi coefficients by similar arguments. In particular, under the assumptions of Theorem \ref{theotaylorfrac}, we also obtain
\begin{equation}	\label{fracjacobibound}
\sum_{\nu\in\cF} \bigl(a_\nu^{-1}\tilde\rho^\nu\|v_\nu\|_{Z}\bigr)^2 <\infty
\end{equation}
with $\tilde\rho_j$ as defined there.
\end{remark}

\section{Hermite expansions}

We now turn to lognormal diffusion coefficients of the form $a=\exp(b)$
and the proof of Theorem \ref{theohermiteH2}. This proof is more involved than
those of the previous results, but follows in part a similar route as the proof of
Theorem \ref{theohermiteV} from \cite{BCDM}. For this reason, we only sketch
below the arguments when they are the same, and we detail the part that differs
significantly.

By similar arguments as in \cite[\S 2]{BCDM}, one shows that under the assumptions of Theorem \ref{theohermiteH2}, one has 
for almost every $y\in U=\R^\N$ in the sense of the Gaussian product measure $\gamma$,
\begin{equation}
\norm{b(y)}_{L^\infty}<\infty \quad {\rm and}Ê\quad \|\nabla b(y)\|_{L^\infty}  < \infty, 
\end{equation}
where we have used the notation $\|\nabla \psi\|_{L^\infty}:= \| |\nabla \psi| \|_{L^\infty}$, 
 from which one concludes $u(y) \in W$ for such $y$. In addition, one has the moment bounds
\begin{equation}
\label{boundedmoments}
	\E(\exp(k\norm{b}_{L^\infty})) , \; \E(\exp(k\|\nabla b\|_{L^\infty})), \; \E(\norm{u}_V^k), \; 
\E(\norm{u}_W^k) \; < \;\infty
\end{equation}
for all $0\leq k < \infty$. 
Moreover, following the lines of \cite[Theorem 3.3]{BCDM}, we also obtain
\begin{equation}
\label{sumidentity}
\sum_{\nu\in\cF} b_\nu \norm{u_\nu}^2_W 
= 
\sum_{\norm{\mu}_{\ell^\infty} \leq r} \frac{\rho^{2\mu}}{\mu!} \int_U \norm{\partial^\mu u(y)}_W^2 \,d\gamma(y),
\end{equation}
where
\begin{equation}
b_\nu:=\sum_{\|\mu\|_{\ell^\infty}\leq r}{\nu\choose \mu} \rho^{2\mu}.
\end{equation}
Here, we recall from \cite{BCDM} the notation
\begin{equation}
{\nu\choose \mu}:=\prod_{j\geq 1}{\nu_j \choose \mu_j},\quad \mu,\nu\in\cF
\end{equation}
with the convention
\begin{equation}
{n\choose m}:=0, \quad \text{if $m>n$.}
\end{equation}
The central estimate for the proof of Theorem  \ref{theohermiteH2} is the following.

\begin{theorem}
\label{hermiteapproxauxiliary}
	Let $r\geq 1$ be an integer, and let $(\rho_j)_{j\geq 1}$ be a positive sequence such that
	\begin{equation}
		K := \left\| \sum_{j\geq 1} \rho_j \abs{\psi_j}\right\|_{L^\infty(D)} 
                   < \frac{\ln \theta}{\sqrt{r}}, \qquad \theta:= 1 + \biggl ( 1 - \frac1{\sqrt{2}}\biggr)^2,
\label{Kcondition}\end{equation}
and
	\begin{equation}
	\hat K := \left\| \sum_{j\geq 1} \rho_j \abs{\nabla\psi_j} \right\|_{L^\infty(D)} < \infty\;.
	\end{equation}
	Then
	\begin{equation}
	\label{yintegralestimate}
	\sum_{\norm{\mu}_{\ell^\infty} \leq r} \frac{\rho^{2\mu}}{\mu!} 
         \int_U \norm{\partial^\mu u(y)}_W^2 \,d\gamma(y) < \infty.
	\end{equation}
\end{theorem}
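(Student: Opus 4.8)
The plan is to deduce the $W$-norm estimate \iref{yintegralestimate} from the corresponding $V$-norm estimate---which is obtained by exactly the route used for Theorem~\ref{theohermiteV} in \cite{BCDM}---by means of a pointwise-in-$y$ identity for $\Delta\partial^\mu u$ coming from the strong form \iref{strongellip}. This identity, which controls the $W$-norm of each parametric derivative by $V$-norms of derivatives of equal and one-lower order, is the part that genuinely differs from the $V$-case; the subsequent integration in $y$ then parallels \cite{BCDM} and uses the moment bounds \iref{boundedmoments}.

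First I would differentiate the strong form. Writing $a=\exp(b)$ and dividing \iref{strongellip} by $a$ gives $\Delta u = -f/a - \nabla b\cdot\nabla u$, since $\nabla a/a=\nabla b$. Because $b=\sum_{j}y_j\psi_j$ is affine in $y$, one has $\partial^\mu(a^{-1})=(-1)^{|\mu|}\psi^\mu a^{-1}$ with $\psi^\mu:=\prod_{j}\psi_j^{\mu_j}$, while $\partial^\lambda(\nabla b)$ vanishes for $|\lambda|\ge 2$ and equals $\nabla\psi_j$ for $\lambda=e_j$. Applying $\partial^\mu$ and using the Leibniz rule thus yields, for every $\mu\neq 0$,
\begin{equation*}
\Delta\partial^\mu u = -(-1)^{|\mu|}\frac{f\psi^\mu}{a}-\nabla b\cdot\nabla\partial^\mu u-\sum_{j:\mu_j\ge 1}\mu_j\,\nabla\psi_j\cdot\nabla\partial^{\mu-e_j}u,
\end{equation*}
where every term lies in $L^2(D)$ (this also confirms $\partial^\mu u(y)\in W$ by induction on $|\mu|$). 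Taking $L^2(D)$-norms and using $\norm{\nabla v}_{L^2}=\norm{v}_V$ gives, pointwise in $y$,
\begin{equation*}
\norm{\partial^\mu u}_W \le \Bignorm{\frac{f\psi^\mu}{a}}_{L^2}+\norm{\nabla b}_{L^\infty}\norm{\partial^\mu u}_V+\sum_{j:\mu_j\ge 1}\mu_j\norm{\nabla\psi_j}_{L^\infty}\norm{\partial^{\mu-e_j}u}_V.
\end{equation*}

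Next I would square this bound (at the cost of a factor $3$), multiply by $\rho^{2\mu}/\mu!$, and sum over $\norm{\mu}_{\ell^\infty}\le r$. Setting $S_V(y):=\sum_{\norm{\mu}_{\ell^\infty}\le r}\frac{\rho^{2\mu}}{\mu!}\norm{\partial^\mu u(y)}_V^2$, the middle term contributes $3\norm{\nabla b}_{L^\infty}^2 S_V$. For the last term, the identity $\frac{\rho^\mu}{\sqrt{\mu!}}\mu_j=\rho_j\sqrt{\mu_j}\,\frac{\rho^{\mu-e_j}}{\sqrt{(\mu-e_j)!}}$, Cauchy--Schwarz in $j$ against $\sum_{j}\rho_j\norm{\nabla\psi_j}_{L^\infty}\le\hat K$, and reindexing $\mu\mapsto\mu-e_j$ (with $\mu_j\le r$) bound its weighted sum by $3r\hat K^2 S_V$. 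For the source term, keeping $\psi^{2\mu}$ inside the spatial integral and summing the exponential series gives $\sum_{\mu}\frac{\rho^{2\mu}}{\mu!}\int_D f^2\psi^{2\mu}\,dx\le\int_D f^2\,e^{\sum_j\rho_j^2\psi_j^2}\,dx\le e^{K^2}\norm{f}_{L^2}^2$, using $\sum_j\rho_j^2\psi_j^2\le(\sum_j\rho_j|\psi_j|)^2\le K^2$, together with $\norm{a^{-1}}_{L^\infty}\le e^{\norm{b}_{L^\infty}}$. Altogether I obtain the pointwise estimate
\begin{equation*}
\sum_{\norm{\mu}_{\ell^\infty}\le r}\frac{\rho^{2\mu}}{\mu!}\norm{\partial^\mu u(y)}_W^2\le 3e^{K^2}\norm{f}_{L^2}^2\,e^{2\norm{b(y)}_{L^\infty}}+3\bigl(\norm{\nabla b(y)}_{L^\infty}^2+r\hat K^2\bigr)S_V(y).
\end{equation*}

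Finally I would integrate over $U$ against $d\gamma$. The source term integrates to a finite multiple of $\E(\exp(2\norm{b}_{L^\infty}))<\infty$ by \iref{boundedmoments}, and $r\hat K^2\int_U S_V\,d\gamma$ is finite by the $V$-analog of the theorem, established as in \cite{BCDM}. The main obstacle is the cross term $\int_U\norm{\nabla b}_{L^\infty}^2\,S_V\,d\gamma$: since the lognormal recursion carries the full Leibniz history, $S_V$ has no clean pointwise majorant, so $\norm{\nabla b}_{L^\infty}^2$ cannot simply be pulled out. I expect this to be the crux, and I would handle it by re-running the integral recursion underlying the $V$-estimate while carrying the additional weight $\norm{\nabla b}_{L^\infty}^2$, so that it is absorbed alongside the $V$-terms; the strict inequality in the threshold \iref{Kcondition} provides the room needed for the augmented recursion to close, while the finiteness of $\E(\exp(k\norm{\nabla b}_{L^\infty}))$ for all $k$ from \iref{boundedmoments} supplies the complementary Gaussian tail control (an alternative being a H\"older split of this integral once an $L^p(\gamma)$ bound on $S_V$, $p>1$, is obtained by the same machinery).
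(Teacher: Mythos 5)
Your route differs from the paper's in a substantive way: by dividing the strong form by $a$ before differentiating in $y$, you exploit $\nabla a/a=\nabla b$ and $\partial^\mu(a^{-1})=(-1)^{|\mu|}\psi^\mu a^{-1}$ to obtain an identity in which $\Delta\partial^\mu u$ is expressed \emph{only} through $V$-norms of $\partial^\mu u$ and $\partial^{\mu-e_j}u$, with no lower-order Laplacian terms. The paper instead differentiates $a\Delta u=-f-\nabla a\cdot\nabla u$ directly, which couples $\Delta\partial^\mu u$ to $\Delta\partial^\nu u$ for \emph{all} $\nu<\mu$ and forces a recursive estimate for the weighted sums $\hat\sigma_k$, closed by an induction that uses the specific constant $\theta=1+(1-1/\sqrt{2})^2$. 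Your identity, if the subsequent estimates were carried out correctly, would bypass that recursion entirely and reduce the $W$-bound to the known $V$-bound; this would be a genuine simplification. However, two steps as written do not hold up.

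First, the inequality $\sum_{j}\rho_j\norm{\nabla\psi_j}_{L^\infty}\le\hat K$ is false: since $\hat K=\bignorm{\sum_{j}\rho_j\abs{\nabla\psi_j}}_{L^\infty}$, the inequality goes the other way, and in the multilevel settings that motivate the paper the left-hand side is typically infinite while $\hat K$ is finite. You must keep the spatial integral inside and apply Cauchy--Schwarz in $j$ \emph{pointwise in $x$} against $\sum_{j}\rho_j\abs{\nabla\psi_j(x)}\le\hat K$ (as the paper does for its term $S_4$); this does recover a bound of the form $r\hat K^2 S_V(y)$, so the step is repairable but not as stated. Second, and more seriously, you leave your self-identified crux --- the cross term $\int_U\norm{\nabla b}_{L^\infty}^2 S_V\,d\gamma$ --- unresolved, on the mistaken premise that $S_V$ admits no pointwise-in-$y$ majorant. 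It does: the estimate $\sigma_k\le\delta^k\sigma_0$ of \eqref{sigmaest}, i.e.\ \cite[Theorem 4.1]{BCDM}, holds for a.e.\ fixed $y$, and together with $\sigma_0(y)\le e^{\norm{b(y)}_{L^\infty}}\norm{f}_{V'}^2$ and $\norm{v}_V^2\le e^{\norm{b(y)}_{L^\infty}}\int_D a(y)\abs{\nabla v}^2\,dx$ it yields $S_V(y)\le C\,e^{2\norm{b(y)}_{L^\infty}}$ with $C$ independent of $y$; the cross term is then finite by Cauchy--Schwarz and the moment bounds \eqref{boundedmoments}. Your proposed alternatives (re-running the recursion with the extra weight, or a H\"older split resting on an unproved $L^p(\gamma)$ bound for $S_V$) are not carried out, so as written the proof is incomplete at precisely the point you flag; with the pointwise majorant supplied, your argument closes and is in fact shorter than the paper's.
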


\begin{proof}
We first establish a bound pointwise a.e.\ in $y$. 
Let thus $y\in U$ with $\norm{b(y)}_{L^\infty}<\infty$ and 
$\|\nabla b(y)\|_{L^\infty} < \infty$ be fixed.
Since $a(y)\in W^{1,\infty}(D)$ and $u(y)\in W$, we have 
\begin{equation}
\label{strongformy}
a(y) \Delta u(y) = - f - \nabla a(y)\cdot \nabla u(y)	
\end{equation}
in $L^2(D)$. % a.e.\ in $D$. 
Similarly to \cite[Lemma 3.1]{BCDM}, using the notation
\begin{equation}
	S_\mu := \{\nu \in \cF: \nu \le \mu \ \text{and} \ \nu \neq \mu\},
	\quad \mu \in \cF,
\end{equation}
for $\mu \neq 0$ we obtain
\begin{equation}
\label{Laplacianderiv}
	a(y) \Delta \partial^\mu u(y) = - \sum_{\nu\in S_\mu} {\mu\choose \nu} \partial^{\mu-\nu} a(y)\, \Delta \partial^\nu u(y) - \sum_{0\leq \nu\leq \mu} {\mu\choose \nu} \nabla \partial^{\mu - \nu} a(y) \cdot \nabla \partial^\nu u(y).
\end{equation}
We now establish bounds for 
\begin{equation}
	\sigma_k := \sum_{\mu\in\Lambda_k} \frac{\rho^{2\mu}}{\mu!} \int_D a(y)\abs{\nabla \partial^\mu u(y)}^2\,dx, 
	\quad
	\hat\sigma_k := \sum_{\mu\in\Lambda_k} \frac{\rho^{2\mu}}{\mu!} \int_D a(y)\abs{\Delta \partial^\mu u(y)}^2\,dx,
\end{equation}
where
\begin{equation}
	\Lambda_k := \{ \mu\in\cF \colon \abs{\mu} = k, \norm{\mu}_{\ell^\infty} \leq r \}.
\end{equation}
Under the given assumptions, \cite[Theorem 4.1]{BCDM} yields
\begin{equation}
\label{sigmaest}
\sigma_k \leq \delta^k \sigma_0\quad \text{with $\delta := \sqrt{r} K / \ln \theta< 1$},
	\end{equation}
and it remains to bound $\hat\sigma_k$. Note first that
$\partial^{\mu-\nu} a(y) = a(y) \psi^{\mu-\nu}$ and
\begin{equation}
	\nabla \bigl(  a(y) \psi^{\mu-\nu} \bigr)
	 = a(y) \psi^{\mu-\nu} \nabla b(y) + a(y) \sum_{j\in\supp(\mu-\nu)} (\mu_j-\nu_j) \psi^{\mu-\nu-e_j} \nabla\psi_j.
\end{equation}
Here and further we use the notation 
\be
\psi^\nu:=\prod_{j\geq 1} \psi_j^{\nu_j} \quad {\rm and}\quad |\psi |^\nu:=\prod_{j\geq 1} |\psi_j|^{\nu_j}.
\ee
Thus multiplication of \eqref{Laplacianderiv} by $\rho^{2\mu} \Delta\partial^\mu u(y)/ \mu!$, integration over $D$, and summation over $\mu\in\Lambda_k$ yield
\begin{equation}
	\hat\sigma_k \leq S_1 + S_2 + S_3 + S_4,
\end{equation}
where
\begin{align}
	 S_1 &:= \sum_{\mu\in\Lambda_k} \sum_{\nu\in S_\mu} \int_D \frac{\rho^{2\mu}\abs{\psi}^{\mu-\nu}}{(\mu-\nu)! \nu!} a(y) \abs{\Delta \partial^\nu u(y)}\abs{\Delta\partial^\mu u(y)}\,dx ,\\
	 S_2 &:= \sum_{\mu\in\Lambda_k} \frac{\rho^{2\mu}}{\mu!} \int_D a(y)\abs{\nabla b(y)} \abs{\nabla \partial^\mu u(y)}\abs{\Delta \partial^\mu u(y)}\,dx, \\
	 S_3 &:= \sum_{\mu\in\Lambda_k} \sum_{\nu\in S_\mu} \int_D \frac{\rho^{2\mu}\abs{\psi}^{\mu-\nu}}{(\mu-\nu)! \nu!} a(y) \abs{\nabla b(y)} \abs{\nabla \partial^\nu u(y)}\abs{\Delta\partial^\mu u(y)}\,dx ,\\
	 S_4 &:= \sum_{\mu\in\Lambda_k} \sum_{\nu\in S_\mu} \sum_{j\in\supp(\mu-\nu)} \int_D \frac{\rho^{2\mu} \abs{\psi}^{\mu-\nu-e_j} }{(\mu-\nu-e_j)! \nu!} a(y) \abs{\nabla\psi_j}\abs{\nabla\partial^\nu u(y)} \abs{\Delta\partial^\mu u(y)}\,dx.
\end{align}
Introducing the functions
\begin{equation}
	\varepsilon(\mu,\nu) = \sqrt{\frac{\mu!}{\nu!}} \frac{\rho^{\mu-\nu} \abs{\psi}^{\mu-\nu}}{(\mu-\nu)!},
\end{equation}
we can proceed exactly as in the proof of \cite[Theorem 4.1]{BCDM} to show
\begin{equation}
\label{S1bound}
	S_1 = \sum_{\mu\in\Lambda_k} \sum_{\nu\in S_\mu} \int_D \varepsilon(\mu,\nu) a(y) \frac{\rho^\nu \abs{\Delta\partial^\nu u(y)}}{\sqrt{\nu!}} \frac{\rho^\mu\abs{\Delta\partial^\mu u(y)}}{\sqrt{\mu!}} \,dx 
	 \leq \biggl( \sum_{\ell = 0}^{k-1} \frac{(\sqrt{r} K)^{k-\ell}}{(k-\ell)!} \hat\sigma_\ell\biggr)^{\frac12} \hat\sigma_k^{\frac12}.
\end{equation}
For $S_2$, by the Cauchy-Schwarz inequality we immediately obtain
\begin{equation}
	S_2 \leq \|\nabla b(y)\|_{L^\infty} \sqrt{\sigma_k \hat\sigma_k}.
\end{equation}
Proceeding again as in \cite[Theorem 4.1]{BCDM} also gives
\be
\begin{aligned}
	S_3 &= \sum_{\mu\in\Lambda_k} \sum_{\nu\in S_\mu} \int_D \varepsilon(\mu,\nu) a(y) \abs{\nabla b(y)}\frac{\rho^\nu \abs{\nabla\partial^\nu u(y)}}{\sqrt{\nu!}} \frac{\rho^\mu\abs{\Delta\partial^\mu u(y)}}{\sqrt{\mu!}} \,dx \\
	&\leq \|\nabla b(y)\|_{L^\infty} \biggl( \sum_{\ell = 0}^{k-1} \frac{(\sqrt{r} K)^{k-\ell}}{(k-\ell)!} \sigma_\ell\biggr)^{\frac12} \hat\sigma_k^{\frac12}.
\end{aligned}
\ee
By \eqref{sigmaest}, 
\begin{equation}
	\sum_{\ell = 0}^{k-1} \frac{(\sqrt{r} K)^{k-\ell}}{(k-\ell)!} \sigma_\ell  <  \delta^k \sigma_0,
\end{equation}
and consequently
\begin{equation}
\label{S23bound}
	S_2 + S_3 < 2 \|\nabla b(y)\|_{L^\infty} \delta^{\frac{k}2} (\sigma_0 \hat\sigma_k)^{\frac12}.
\end{equation}
In order to bound $S_4$, we introduce
\begin{equation}
	\hat\varepsilon(\mu,\nu) := \sum_{j\in\supp(\mu-\nu)} \sqrt{\frac{\mu!}{\nu!}} \frac{\rho^{\mu-\nu-e_j} \abs{\psi}^{\mu-\nu-e_j}}{(\mu-\nu-e_j)!} \rho_j \abs{\nabla \psi_j},
\end{equation}
and thus obtain
\begin{equation}
	S_4 = \int_D \sum_{\mu\in\Lambda_k} \sum_{\nu\in S_\mu} \hat\varepsilon(\mu,\nu) a(y) \frac{\rho^\nu \abs{\nabla \partial^\nu u(y)}}{\sqrt{\nu!}} \frac{\rho^\mu \abs{\Delta \partial^\mu u(y)}}{\sqrt{\mu!}}\,dx .
\end{equation}
By Cauchy-Schwarz on the summations over $\mu$ and $\nu$, the expression on the right is bounded from above by
\begin{equation}
\label{S4est}
	\int_D \biggl( \sum_{\mu\in\Lambda_k} \sum_{\nu\in S_\mu} \hat\varepsilon(\mu,\nu) a(y) \frac{\abs{\rho^\nu \nabla \partial^\nu u(y)}^2}{\nu!} \biggr)^{\frac12} 
	\biggl( \sum_{\mu\in\Lambda_k}  a(y) \frac{\abs{\rho^\mu \Delta \partial^\mu u(y)}^2}{\mu!} \sum_{\nu\in S_\mu} \hat\varepsilon(\mu,\nu) \biggr)^{\frac12}\,dx.
\end{equation}
On the one hand, using that $\mu!/\nu! \leq r^{\abs{\mu-\nu}}$, we estimate
\begin{equation}
\label{hateps1}
\begin{aligned}
  	\sum_{\nu\in S_\mu} \hat\varepsilon(\mu,\nu) &\leq
  	  \sum_{\ell=1}^k r^{\ell/2} \sum_{\substack{\nu\in S_\mu \\ \abs{\mu-\nu}=\ell}} \sum_{j \in \supp(\mu - \nu)} \rho_j \abs{\nabla \psi_j} \frac{\rho^{\mu-\nu-e_j} \abs{\psi}^{\mu-\nu-e_j}}{(\mu-\nu-e_j)!} \\
  	  &\leq \sum_{\ell=1}^k r^{\ell/2} \sum_{j\geq 1} \rho_j \abs{\nabla \psi_j} \sum_{\abs{\tau}=\ell-1} \frac{\rho^\tau \abs{\psi}^\tau}{\tau!} 
  	  \leq \hat K \sum_{\ell = 1}^k r^{\ell/2} \frac{K^{\ell-1}}{(\ell-1)!}
  	   \leq \sqrt{r} \hat K e^{\sqrt{r} K}.
\end{aligned}
\end{equation}
On the other hand, with $R_{\nu,k}:= \{ \mu\in\Lambda_k\colon \nu\in S_\mu\}$, we have
\begin{equation}
	\sum_{\mu\in\Lambda_k} \sum_{\nu\in S_\mu} \hat\varepsilon(\mu,\nu) a(y) \frac{\abs{\rho^\nu \nabla \partial^\nu u(y)}^2}{\nu!}
	  = \sum_{\ell=0}^{k-1} \sum_{\nu \in \Lambda_\ell} a(y) \frac{\abs{\rho^\nu \nabla \partial^\nu u(y)}^2}{\nu!} \sum_{\mu \in R_{\nu,k}} \hat\varepsilon(\mu,\nu)
\end{equation}
where, for each $\nu\in\Lambda_\ell$,
\begin{equation}
\label{hateps2}
\begin{aligned}
  	\sum_{\mu\in R_{\nu,k}} \hat\varepsilon(\mu,\nu) 
  	  &\leq  \sum_{\mu\in R_{\nu,k}}  r^{\abs{\mu-\nu}/2} \sum_{j \in \supp(\mu - \nu)} \rho_j \abs{\nabla \psi_j} \frac{\rho^{\mu-\nu-e_j} \abs{\psi}^{\mu-\nu-e_j}}{(\mu-\nu-e_j)!} \\
  	  & \leq r^{(k-\ell)/2} \sum_{j\geq 1} \rho_j \abs{\nabla \psi_j} \sum_{\abs{\tau}=k -\ell-1}	\frac{\rho^\tau \abs{\psi}^\tau}{\tau!} \leq r^{(k-\ell)/2} \hat K \frac{K^{k-\ell-1}}{(k-\ell-1)!}.
\end{aligned}
\end{equation}
Combining \eqref{S4est} with \eqref{hateps1} and \eqref{hateps2} and using Cauchy-Schwarz for the integral over $D$ yields
\begin{equation}
	S_4 \leq \sqrt{r} \hat K e^{\frac12\sqrt{r} K} \biggl( \sum_{\ell=0}^{k-1} \frac{(\sqrt{r} K)^{k-\ell-1}}{(k-\ell-1)!} \sigma_\ell \biggr)^{\frac12} \hat\sigma_k^{\frac12} \leq \sqrt{\frac{\theta r}{\delta}} \hat K e^{\frac12 \sqrt{r} K} \sigma_0^{\frac12} \delta^{\frac{k}2} \hat\sigma_k^{\frac12}, 
\end{equation}
where we have used that by \eqref{sigmaest},
\begin{equation}
\label{S4bound}
	\sum_{\ell=0}^{k-1} \frac{(\sqrt{r} K)^{k-\ell-1}}{(k-\ell-1)!} \sigma_\ell 
	\leq \delta^{k-1} \sum_{\ell=0}^{k-1} \frac{(\ln \theta)^{k-\ell-1}}{(k-\ell-1)!} \sigma_0 \leq \delta^{k-1} \theta \sigma_0.
\end{equation}
In summary, from \eqref{S1bound}, \eqref{S23bound}, and \eqref{S4bound} we obtain the recursive estimate
\begin{equation}
	\label{sigmahatbound}
	\hat\sigma_k \leq
	\biggl( \sum_{\ell = 0}^{k-1} \frac{(\sqrt{r} K)^{k-\ell}}{(k-\ell)!} \hat\sigma_\ell\biggr)^{\frac12} \hat\sigma_k^{\frac12}
	+ C_0^{\frac12}  \delta^{\frac{k}2}  \hat\sigma_k^{\frac12},
\end{equation}
where
\begin{equation}
	C_0 :=  \Bigl( 2 \|\nabla b(y)\|_{L^\infty}  + \sqrt{\frac{\theta r}{\delta}} \hat K e^{\frac12 \sqrt{r} K} \Bigr)^2 \sigma_0.
\end{equation}

We now prove by induction that
\begin{equation}
\label{hatsigmadecay}
	\hat\sigma_k \leq 2 \hat C_0 \delta^k, \quad\hat C_0 := \max\{ \hat\sigma_0 , C_0 \}.
\end{equation}
This evidently holds true for $k=0$. 
Assuming that $\hat\sigma_\ell \leq 2 \hat C_0 \delta^\ell$ for $0\leq \ell < k$, 
either $\hat\sigma_k = 0$, in which case there is nothing to prove, or
\begin{equation}
	\hat\sigma_k^{\frac12} \leq \Biggl( \sqrt{2} \biggl(\sum_{\ell=0}^{k-1} \frac{(\ln\theta)^{k-\ell}}{(k-\ell)!} \biggr)^{\frac12} + 1 \Biggr) \hat C_0^{\frac12} \delta^{\frac{k}2} .
\end{equation}
Since
\begin{equation}
	\biggl(\sum_{\ell=0}^{k-1} \frac{(\ln\theta)^{k-\ell}}{(k-\ell)!} \biggr)^{\frac12} \leq \sqrt{2} (e^{\ln\theta} - 1)^{\frac12} = \sqrt{2}\biggl( 1 - \frac1{\sqrt{2}}\biggr) = \sqrt{2}-1,
\end{equation}
this shows \eqref{hatsigmadecay}.
In summary, as a consequence of \eqref{hatsigmadecay} we have, by summation over $k$,
\begin{multline}
\label{aeyestimate}
\sum_{\norm{\mu}_{\ell^\infty}\leq r} \frac{\rho^{2\mu}}{\mu!} 
\int_D a(y)\abs{\Delta \partial^\mu u(y)}^2\,dx \\
\leq 
	C \max\biggl\{ \int_D a(y) \abs{\Delta u(y)}^2 \,dx, \bigl( 1 + \|\nabla b(y)\|_{L^\infty}  \bigr)^2 \int_D a(y) \abs{\nabla u(y)}^2\,dx \biggr\},
\end{multline}
where $C>0$ depends on $r$, $K$, $\hat K$, $\delta$, but not on $y$. 
The estimate \eqref{yintegralestimate} now follows with
\begin{equation}
\norm{\partial^\mu u(y)}^2_W 
\leq \exp(\norm{b(y)}_{L^\infty(D)}) \int_D a(y) \abs{\Delta \partial^\mu u(y)}^2 \,dx
\end{equation}
and the boundedness of moments \eqref{boundedmoments}, similarly to \cite[Theorem 4.2]{BCDM}.
\end{proof}

Note that under the assumptions of Theorem \ref{theohermiteH2}, any $(\rho_j)_{j\geq 1}$ such that \eqref{condhermite} holds can be rescaled, without affecting the $\ell^q$ summability of $(\rho_j^{-1})_{j\geq 1}$, such that the condition \eqref{Kcondition} in Theorem \ref{hermiteapproxauxiliary} holds. Using Theorem \ref{hermiteapproxauxiliary} and \eqref{sumidentity}, we can thus conclude the proof of Theorem \ref{theohermiteH2},
by means of a H\"older inequality, exactly as in \cite[Section 5]{BCDM}, 
choosing $r$ depending on $p$.
\section{Towards space-parameter adaptivity}
\label{sec:Adap}

The space-parameter approximation results presented 
in \S \ref{sec:SpcParDiscr} are based on analyzing the error resulting from
space discretization of each coefficient $u_\nu$ 
in \iref{series}. The error analysis is based on the available
$\ell^p$ summability for both sequences $(\|u_\nu\|_{V})_{\nu\in \cF}$
and $(\|u_\nu\|_{X})_{\nu\in \cF}$ where $X$ is a regularity
class that satisfies the spatial \emph{approximation property}
\be
\min_{v_n\in V_n} \|v-v_n\|_V \leq C_X n^{-t} \|v\|_{X}, \quad n > 0, \quad v\in X,
\label{spatialappn2}
\ee
for a family of $n$-dimensional subspaces $(V_n)_{n>0}$.   
We have focused our attention on the Hilbertian Sobolev spaces
$H^s$ for which \iref{spatialappn2} holds with $t=\frac {s-1}m$ when
using finite element spaces $V_n$ of sufficiently high
order on either uniformly refined meshes, or locally refined meshes in the
case of polygonal domains. 

We next would like to consider adaptive space discretizations,
such as adaptive finite elements or 
wavelet methods. In this setting, the space $V_n$ 
is no longer a linear space. For instance, in an adaptive 
wavelet method, it is described as the set of all possible $n$-term
combinations in the given wavelet basis $(\psi_\lambda)_{\lambda\in\cS}$, that is,
\be
\label{waveletapproxspaces}
V_n:=\Big\{\sum_{\lambda \in E} c_\lambda\psi_\lambda\; : \; (c_\lambda)_{\lambda\in E}\in\R^E, 
 E\subset \cS,\quad \#(E)\leq n\Big\}.
\ee
Therefore, when using such spaces, the resulting approximant
$u_{\bn}$ given in \iref{ntermh}, may be rewritten in the space-parameter basis 
$(\psi_\lambda \otimes \phi_\nu)_{(\lambda,\nu)\in\cS\times \cF}$ according to
\be
u_{\bn}= \sum_{\nu\in \Lambda_n}\sum_{\lambda\in E_\nu} c_{\lambda,\nu}\; \psi_\lambda \otimes \phi_\nu,
\ee
where $\#(E_\nu)=n_\nu$. When imposing the number of degrees of freedom
$N=\sum_{\nu\in \Lambda_n} n_{\nu}$, this means that 
$u_{\bn}$ can be picked as any $N$-term 
approximation
\be
u_{\bn}=\sum_{(\lambda,\nu)\in G_N} c_{\lambda,\nu} \;\psi_\lambda \otimes \phi_\nu,
\label{ntermadapt}
\ee
where $\#(G_N)=N$. The error of best $N$-term approximation is a natural
benchmark for space-parameter adaptive methods, as developed in \cite{BCD,G}.

One advantage of nonlinear families $(V_n)_{n\geq 1}$ such as given by
\iref{waveletapproxspaces} is that the smoothness 
conditions that govern a given approximation rate $n^{-t}$
are substantially weaker than with their linear counterparts.
Typical results, see e.g. \cite{Co,De}, are the following: if $D\subset \R^m$ is a bounded Lipschitz domain,
for $s>1$ and $t=\frac {s-1}m$ the approximation property
 \iref{spatialappn2} holds for the Sobolev spaces
\be
X=W^{s,\tau}(D), \quad   \frac   1 \tau < \min\Big\{1,\frac 1 2+\frac {s-1}m\Big\},
\ee
and more general Besov spaces
\be
X=B^{s,\tau}_q(D), \quad  \frac   1 \tau < \frac 1 2+\frac {s-1}m.
\ee
The limit case $\frac 1 \tau =\frac 1 2+\frac {s-1}m$ also holds if $q\leq \tau$.
These results hold provided that the 
degree of the chosen finite elements, or degree of polynomial reproduction
of the chosen wavelet systems, is larger than $s-1$.

Since these spaces are larger than $H^s$ when $\tau<2$,
we may hope that the summability index $p_X$ of $(\|u_\nu\|_{X})_{\nu\in \cF}$
is smaller when using such non-Hilbertian spaces for $X$, thereby leading to
improved rates of space-parameter approximation when using 
best $N$-term approximations of the form \iref{ntermadapt}.

In the following, we discuss this improved summability in a simple case.
We give a result for the Taylor and Jacobi coefficients in the case of affine parameter dependence
\iref{affine}. For $1\leq \tau\leq 2$, we introduce the Banach space 
\be
B^\tau:=\{v\in V\; : \; \Delta u\in L^\tau(D)\},
\ee
endowed with the norm and semi-norm
\be
\|v\|_{B^\tau}=\|v\|_V+ |v|_{B^\tau}, \quad |v|_{B^\tau}:=\|\Delta v\|_{L^\tau}.
\ee
Note that if $D$ is convex and $1<\tau\leq 2$,
we have by elliptic regularity that $B^\tau=W^{2,\tau}(D)$.

\begin{theorem}
\label{theotaylorwW2p}
Let $1\leq \tau<2$. 
Assume that $\bar a\in L^\infty(D)$ is such that $\essinf \bar a>0$,
and that there exists a sequence $\rho=(\rho_j)_{j\geq 1}$ of positive numbers
and a sequence $\bar \rho=(\bar \rho_j)_{j\geq 1}$ of numbers strictly larger than $1$,
such that 
\be
\left \| \frac{\sum_{j\geq 1} \bar \rho_j \rho_j|\psi_j|}{\bar a} \right \|_{L^\infty}=\theta <1,
\label{uearhodouble}
\ee 
and that $(\bar \rho_j^{-1})_{j\geq 1}\in \ell^{\bar q}(\N)$ where $\frac 1 2+\frac 1 {\bar q}=\frac 1 \tau$.
Assume in addition that the right side $f$ in \iref{ellip}
belongs to $H^{-1}(D)\cap L^\tau(D)$, and that $\bar a$ and all functions $\psi_j$
belong to $W^{1,\infty}(D)$ where
\be
\label{taylorwW2pweights}
\biggl\| \sum_{j\geq 1} \rho_j |\nabla\psi_j| \biggr\|_{L^\infty} <\infty.
\ee
Then
\be
\sum_{\nu\in\cF} (\rho^{\nu} \|t_\nu\|_{B^{\tau}})^\tau <\infty, \quad \rho^\nu:=\prod_{j\geq 1} \rho_j^{\nu_j}.
\label{taylorwW2p}
\ee
\end{theorem}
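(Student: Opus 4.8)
The plan is to follow the template of the proof of Theorem \ref{theotaylorwH2}, but to replace the Hilbert-space ($L^2$) manipulations by a \emph{pointwise-in-$x$} estimate in the sequence space $\ell^\tau(\cF)$, and to recover the missing summability of the gradient terms from the second weight sequence $\bar\rho$ via H\"older's inequality. Throughout I write $g_\nu:=\rho^\nu\Delta t_\nu$ and $p_\nu:=\rho^\nu\nabla t_\nu$, so that, by Tonelli's theorem, the Laplacian part of the target \iref{taylorwW2p} reads $\int_D \norm{(g_\nu(x))_{\nu\in\cF}}_{\ell^\tau}^\tau\,dx<\infty$, and likewise $\int_D \norm{(p_\nu(x))_{\nu\in\cF}}_{\ell^\tau}^\tau\,dx=\sum_{\nu\in\cF}\rho^{\tau\nu}\norm{\nabla t_\nu}_{L^\tau}^\tau$. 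That $\Delta t_\nu\in L^\tau(D)$ in the first place follows by induction on $\abs{\nu}$ from the differentiated strong form \iref{DeltRec}, using $f\in L^\tau(D)$, $\nabla t_{\nu-e_j}\in L^2\hookrightarrow L^\tau$ (as $D$ is bounded and $\tau\le2$), and $\bar a,\psi_j\in W^{1,\infty}(D)$.

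The first task is the gradient summability. Since $\bar\rho_j>1$, the hypothesis \iref{uearhodouble} is exactly the uniform ellipticity condition for the weight sequence $(\bar\rho_j\rho_j)_{j\geq1}$, and $f\in H^{-1}(D)$ gives $D_0=\int_D\bar a\,\abs{\nabla t_0}^2<\infty$; hence the opening argument of Theorem \ref{theotaylorwH2} (the recursion $D_n\le\kappa^n D_0$ from \cite{BCM}) applies verbatim and yields $\sum_{\nu\in\cF}\bigl((\bar\rho\rho)^\nu\norm{t_\nu}_V\bigr)^2<\infty$. To pass from this weighted $\ell^2$ bound to a weighted $\ell^\tau$ bound, I use $\norm{\nabla t_\nu}_{L^\tau}\le\abs{D}^{1/\tau-1/2}\norm{\nabla t_\nu}_{L^2}$, write $\rho^\nu=(\bar\rho\rho)^\nu\bar\rho^{-\nu}$, and apply H\"older with conjugate exponents $2/\tau$ and $2/(2-\tau)$:
\[
\sum_{\nu\in\cF}(\rho^\nu\norm{\nabla t_\nu}_{L^2})^\tau
\le\Bigl(\sum_{\nu\in\cF}((\bar\rho\rho)^\nu\norm{\nabla t_\nu}_{L^2})^2\Bigr)^{\tau/2}
\Bigl(\sum_{\nu\in\cF}\bar\rho^{-\frac{2\tau}{2-\tau}\nu}\Bigr)^{\frac{2-\tau}{2}}.
\]
The second factor factorizes as $\prod_{j\geq1}\bigl(1-\bar\rho_j^{-\bar q}\bigr)^{-1}$ with $\bar q=\frac{2\tau}{2-\tau}$, which is precisely the exponent fixed by $\frac12+\frac1{\bar q}=\frac1\tau$; it is finite because $(\bar\rho_j^{-1})_{j\geq1}\in\ell^{\bar q}(\N)$. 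This already establishes the $V$-part $\sum_\nu(\rho^\nu\norm{t_\nu}_V)^\tau<\infty$ of \iref{taylorwW2p} and gives the finiteness of $\int_D\norm{(p_\nu(x))}_{\ell^\tau}^\tau\,dx$.

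For the Laplacian, I multiply \iref{DeltRec} by $\rho^\nu$ and use $\rho^\nu\Delta t_{\nu-e_j}=\rho_j g_{\nu-e_j}$ and $\rho^\nu\nabla t_{\nu-e_j}=\rho_j p_{\nu-e_j}$, so that, pointwise a.e.\ and for every $\nu\neq0$,
\[
\bar a\,\abs{g_\nu}\le\abs{\nabla\bar a}\,\abs{p_\nu}
+\sum_{j\geq1}\rho_j\abs{\psi_j}\,\abs{g_{\nu-e_j}}
+\sum_{j\geq1}\rho_j\abs{\nabla\psi_j}\,\abs{p_{\nu-e_j}},
\]
while for $\nu=0$ the strong form \iref{strongellip} gives $\bar a\,\abs{g_0}\le\abs{\nabla\bar a}\,\abs{p_0}+\abs{f}$. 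Taking the $\ell^\tau(\cF)$-norm in $\nu$, using that the shift $\nu\mapsto\nu-e_j$ preserves the $\ell^\tau$-norm and applying Minkowski's inequality (valid since $\tau\ge1$) over the non-negative $j$-sums, the two sums contribute the factors $\sum_j\rho_j\abs{\psi_j}$ and $\sum_j\rho_j\abs{\nabla\psi_j}$, while the isolated $\nu=0$ slot contributes an additive $\abs{f}$. Since $\bar\rho_j>1$ forces $\sum_j\rho_j\abs{\psi_j}/\bar a\le\theta<1$ by \iref{uearhodouble}, and $\sum_j\rho_j\abs{\nabla\psi_j}\le\hat B:=\norm{\sum_j\rho_j\abs{\nabla\psi_j}}_{L^\infty}<\infty$ by \iref{taylorwW2pweights}, absorbing the $g$-term yields the pointwise bound
\[
\norm{(g_\nu(x))}_{\ell^\tau}\le\frac{\abs{\nabla\bar a(x)}+\hat B}{(1-\theta)\,\bar a(x)}\,\norm{(p_\nu(x))}_{\ell^\tau}
+\frac{\abs{f(x)}}{(1-\theta)\,\bar a(x)}.
\]
Raising to the power $\tau$, integrating over $D$ and using $\essinf\bar a>0$, $\nabla\bar a\in L^\infty(D)$, the finiteness of $\int_D\norm{(p_\nu)}_{\ell^\tau}^\tau$ from the previous paragraph, and $f\in L^\tau(D)$, gives $\sum_\nu(\rho^\nu\norm{\Delta t_\nu}_{L^\tau})^\tau<\infty$; combined with the $V$-part this is \iref{taylorwW2p}.

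I expect the main obstacle to be the rigorous handling of this pointwise $\ell^\tau$ recursion: the absorption of the $g$-term cannot be performed directly on the full series without first knowing $\norm{(g_\nu(x))}_{\ell^\tau}<\infty$. I would circumvent this by running the estimate on the truncated norms $\bigl(\sum_{\abs{\nu}\le N}\abs{g_\nu}^\tau\bigr)^{1/\tau}$, which are finite sums; since each $g_{\nu-e_j}$ occurring for $\abs{\nu}\le N$ has level $\le N-1$, it is controlled by the level-$N$ truncated norm, the absorption is then legitimate, and the uniform-in-$N$ bound passes to the limit by monotone convergence. The remaining care points are the justification of Minkowski's inequality for the countable $j$-sum (immediate by non-negativity and passage to partial sums) and the verification that the resulting constant $\frac{\norm{\nabla\bar a}_{L^\infty}+\hat B}{(1-\theta)\,\essinf\bar a}$ is genuinely $x$-independent.
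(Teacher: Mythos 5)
Your proposal is correct in substance and shares the paper's overall architecture: derive the pointwise recursion for $\Delta t_\nu$ from the differentiated strong form \iref{DeltRec}, feed in the $\ell^\tau$ summability of the gradient terms obtained from the uniform ellipticity condition with the augmented weights $\bar\rho_j\rho_j$, and bootstrap the Laplacian. Your treatment of the gradient part is essentially the paper's: the paper rescales to $\rho_j=1$ and invokes Theorem \ref{theotaylorV} with the weights $\bar\rho_j$ and $\frac1p=\frac1{\bar q}+\frac12=\frac1\tau$ to get $(\norm{t_\nu}_V)_{\nu\in\cF}\in\ell^\tau(\cF)$, which is exactly your weighted-$\ell^2$-plus-H\"older computation unpacked. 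Where you genuinely diverge is the Laplacian estimate. The paper normalizes $\sum_j\abs{\psi_j}$ and $\sum_j\abs{\nabla\psi_j}$ into probability weights, raises the pointwise inequality to the power $\tau$ via convexity together with the elementary bound $(a+b)^\tau\le Ca^\tau+(1+\e)b^\tau$ (paying the price $\bar\theta=(1+\e)\theta^\tau$), and then integrates and sums \emph{level by level} in $\abs{\nu}=n$, so that the right-hand side only ever involves level $n-1$ and no absorption at level $n$ is needed. You instead take the full $\ell^\tau(\cF)$ norm in $\nu$ pointwise in $x$ and use Minkowski over the $j$-sums; this is arguably cleaner (the genuine triangle inequality replaces the Jensen/$(1+\e)$ bookkeeping, and you absorb with the sharp constant $\theta$ rather than $(1+\e)\theta^\tau$), but it forces the absorption of a term proportional to the full norm $\norm{(g_\nu(x))}_{\ell^\tau}$ itself, hence the need for an a priori finiteness argument that the paper's level-by-level structure sidesteps.

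On that point, one repair is needed in your truncation: the set $\{\nu\in\cF:\abs{\nu}\le N\}$ is \emph{not} finite (there are infinitely many coordinates $j$), so the truncated norms you propose are not finite sums and their finiteness is not automatic. You should truncate simultaneously in total degree and in support, working with $\bigl(\sum_{\abs{\nu}\le N,\ \supp\nu\subset\{1,\dots,J\}}\abs{g_\nu}^\tau\bigr)^{1/\tau}$; this is a genuinely finite sum, the shifts $\nu\mapsto\nu-e_j$ stay inside the truncated index set, the absorption is then legitimate, and the uniform bound passes to the limit in $J$ and $N$ by monotone convergence. With this adjustment your argument is complete.
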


\begin{proof}
We take $\rho_j=1$, up to rescaling. We notice that when $a\in W^{1,\infty}(D)$, we can write the equation 
in the strong form
$- a \Delta u=\nabla a \cdot \nabla u+ f$, 
where all terms in the equality belong to $H^{-1}(D)$ and to $L^\tau(D)$. Hence
\be
- a(y) \Delta u(y)=\nabla a(y) \cdot \nabla u(y)+ f,
\ee
for all $y\in U$, and we can differentiate at $y=0$, which leads to the identities
\be
-\bar a \Delta t_\nu=\nabla \bar a\cdot \nabla t_\nu+\sum_{j\in {\rm supp}(\nu)} \nabla \psi_i\cdot \nabla t_{\nu-e_j}
+\sum_{j\in {\rm supp}(\nu)}\psi_j \Delta t_{\nu-e_j},
\ee
where all terms in the equality belong to $H^{-1}(D)$ and to $L^\tau(D)$. So pointwise
\be
| \Delta t_\nu|
\leq C \Big (|\nabla t_\nu|+\sum_{j\in {\rm supp}(\nu)} \kappa_j |\nabla t_{\nu-e_j}|\Big )
+\theta \sum_{j\in {\rm supp}(\nu)}\omega_j|\Delta t_{\nu-e_j}|,
\ee
where $\kappa_j:= \frac{|\nabla \psi_i|}{\sum_{j\geq 1}|\nabla \psi_i|}$ and 
$\omega_j:= \frac{|\psi_i|}{\sum_{j\geq 1}|\psi_i|}$ so that 
$\sum_{j\geq 1} \kappa_j=\sum_{j\geq 1} \omega_j= 1$, and where $C>1$ is a fixed constant.
We elevate to the power $\tau$ and use the observation that 
for any $\e>0$, there exists a constant $C=C(\e,\tau)>1$ such that for any $a,b\geq 0$,
\be
(a+b)^\tau \leq  C a^\tau + (1+\e)b^\tau.
\ee
Taking $\e$ small enough, so that $\bar \theta:=(1+\e)\theta^\tau<1$, and using the convexity of $x\mapsto |x|^\tau$, 
we obtain
\be
| \Delta t_\nu|^\tau
\leq C \Big (|\nabla t_\nu|^\tau +\sum_{j\in {\rm supp}(\nu)} \kappa_j |\nabla t_{\nu-e_j}|^\tau\Big )
+\bar\theta \sum_{j\in {\rm supp}(\nu)}\omega_j|\Delta t_{\nu-e_j}|^\tau,
\ee
for some fixed $C>1$. Integrating and summing over $|\nu|=n$ thus gives
\be
\sum_{|\nu|=n}\|\Delta t_\nu\|_{L^\tau}^\tau \leq  C \Big (\sum_{|\nu|=n}\|\nabla t_\nu\|_{L^\tau}^\tau+
\sum_{|\nu|=n-1}\|\nabla t_\nu\|_{L^\tau}^\tau\Big )+\bar \theta \sum_{|\nu|=n-1}\|\Delta t_\nu\|_{L^\tau}^\tau.
\label{eqtau}
\ee
Since we have
\be
\left \| \frac{\sum_{j\geq 1} \bar \rho_j |\psi_j|}{\bar a} \right \|_{L^\infty}=\theta <1,
\label{uearhosimple}
\ee 
with $(\bar \rho_j^{-1})_{j\geq 1}\in \ell^{\bar q}(\N)$ where $\frac 1 2+\frac 1 {\bar q}=\frac 1 \tau$,
application of Theorem \ref{theotaylorV} gives us that $(\|t_\nu\|_V)_{\nu\in\cF}\in \ell^\tau(\cF)$.
Since we assumed $\tau<2$,
this implies that $(\|\nabla t_\nu\|_{L^\tau})_{\nu\in\cF}\in \ell^\tau(\cF)$.
Since $\bar \theta <1$, we conclude by summing \iref{eqtau} over $n$ that
\[
\sum_{\nu\in\cF}\|\Delta t_\nu\|_{L^\tau}^\tau<\infty,
\]
which implies the $\ell^\tau$ summability of $(\| t_\nu\|_{B^\tau})_{\nu\in\cF}$.
\end{proof}

\begin{remark}\label{remjacobiweightedl2}
By proceeding in a similar way as in the proof of Theorem \ref{jacobiweightedl2},
we may extend the above result to Jacobi coefficients under the same assumptions: one has 
\begin{equation}\label{jacobiwW2p}
\sum_{\nu\in\cF} (a_\nu^{-1} \rho^\nu \|v_\nu\|_{B^\tau} )^\tau < \infty
\end{equation}
with $a_\nu$ as in Theorem \ref{jacobiweightedl2}. 
\end{remark}

If $\rho_j >1$ and $(\rho_j)_{j\geq 1} \in \ell^q(\N)$, by H\"older's inequality we obtain 
\be
\label{holdertau}
   \sum_{\nu\in\cF} \|t_\nu\|_{B^\tau}^p 
   \leq \Bigl( \sum_{\nu\in\cF} \bigl( \rho^\nu \norm{ t_\nu }_{B^\tau} \bigr)^\tau  \Bigr)^{\frac{q}{\tau+q}} \Bigl( \sum_{\nu\in\cF}  \rho^{-q\nu}\Bigr)^{\frac{\tau}{q+\tau}}, \quad \frac1p = \frac1q + \frac1\tau.
\ee
This needs to be compared to \eqref{holder}, which based on the same condition \eqref{taylorwW2pweights} constraining $q$ only gives $\frac1p = \frac1q + \frac12$.
Concerning the spatial approximation rate $t$, we observe the following: for space dimension $m=1$, we directly obtain the rate $t = \frac1m = 1$ for nonlinear approximation in $V$ of elements of $B^\tau = W^{2,\tau}(D)$ with $\tau\geq 1$; for $m=2$, we still have $B^\tau = W^{2,\tau}(D)$ for $\tau > 1$ using the above elliptic regularity result, which then again gives $t=\frac1m = \frac12$; whereas for $m=3$, we obtain $t = \frac1m = \frac13$ under the stronger condition $\tau \geq \frac65$. In summary, nonlinear approximation in space gives us the same spatial approximation rate with better summability of the corresponding higher-order norms.

We next use interpolation, similarly to Theorem \ref{theotaylorfrac}, to extend the above results to cases where \eqref{taylorwW2pweights} is not satisfied for any $\rho_j>1$.  

\begin{cor}
\label{cortaylorwW2pinterp}
Let $1\leq \tau<2$. 
Assume that $\bar a\in L^\infty(D)$  is such that $\essinf \bar a>0$, that $f$ in \iref{ellip}
belongs to $H^{-1}(D)\cap L^\tau(D)$ and that $\bar a$ and all functions $\psi_j$
belong to $W^{1,\infty}(D)$. In addition, assume that there exist sequences $(\rho_j)_{j\geq 1}$, $(\hat \rho_j)_{j\geq 1}$ of positive numbers such that 
\be\label{cortaylorwW2pinterpcond}
 \left \| \frac{\sum_{j\geq 1} \hat \rho_j |\psi_j|}{\bar a} \right \|_{L^\infty} <1, \quad 
 \biggl\| \sum_{j\geq 1} \rho_j |\nabla\psi_j| \biggr\|_{L^\infty} <\infty,
\ee 
where $\hat\rho_j / \rho_j > 1$ and $( \rho_j / \hat\rho_j )_{j\geq 1}\in \ell^{q}(\N)$ with $\frac 1 2+\frac 1 {q}=\frac 1 \tau$.
Then with $\tilde\rho_j := \hat\rho_j^{1-\theta} \rho_j^\theta$ and $\tilde \rho^\nu:=\prod_{j\geq 1}  \tilde \rho_j^{\nu_j}$, for all $0<\theta<1$ and $0<r\leq \infty$,
\be\label{taylorwW2pinterp}
\sum_{\nu\in\cF} \bigl(\tilde\rho^{\nu} \norm{t_\nu}_Z \bigr)^\zeta <\infty\quad\text{and}\quad \sum_{\nu\in\cF} \bigl(a_\nu^{-1}\tilde\rho^{\nu} \norm{v_\nu}_Z \bigr)^\zeta<\infty, \qquad \frac1\zeta = \frac12 + \biggl( \frac1\tau - \frac12 \biggr) \theta,
\ee
where $Z=[V,B^\tau]_{\theta,r}$ or $Z=[V,B^\tau]_{\theta}$, and where $a_\nu$ is defined in Theorem \ref{jacobiweightedl2}.
\end{cor}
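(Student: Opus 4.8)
The plan is to follow verbatim the strategy of Corollary~\ref{theotaylorfrac}: interpolate between a weighted $\ell^2$ estimate at the endpoint $V$ and the weighted $\ell^\tau$ estimate at the endpoint $B^\tau$, then close with H\"older's inequality. No new PDE estimate is required; the argument lives entirely at the level of the coefficient sequences and merely recombines results already proved.

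First I would record the two endpoint estimates for the Taylor coefficients. Under the first condition in \eqref{cortaylorwW2pinterpcond}, i.e.\ $\bignorm{\sum_{j} \hat\rho_j\abs{\psi_j}/\bar a}_{L^\infty}<1$, the weighted $\ell^2$ bound in $V$ recalled from \cite{BCM} in the proof of Theorem~\ref{theotaylorwH2} (the summability of the $d_\nu$) gives, after the usual rescaling,
\[
\sum_{\nu\in\cF} (\hat\rho^\nu \norm{t_\nu}_V)^2 < \infty .
\]
For the second endpoint I would invoke Theorem~\ref{theotaylorwW2p} with its sequence $\bar\rho$ taken to be $(\hat\rho_j/\rho_j)_{j\ge1}$ and its sequence $\rho$ kept as $\rho$. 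With this identification the product weight $\bar\rho_j\rho_j$ appearing in \eqref{uearhodouble} becomes $\hat\rho_j$, so the ellipticity hypothesis is precisely the first condition in \eqref{cortaylorwW2pinterpcond}; the assumptions $\hat\rho_j/\rho_j>1$ and $(\rho_j/\hat\rho_j)_{j\ge1}\in\ell^q(\N)$ with $\tfrac12+\tfrac1q=\tfrac1\tau$ supply the required $\ell^{\bar q}$ summability with $\bar q=q$; and the gradient hypothesis is exactly \eqref{taylorwW2pweights}. Theorem~\ref{theotaylorwW2p} then yields the endpoint estimate \eqref{taylorwW2p}, namely $\sum_{\nu}(\rho^\nu\norm{t_\nu}_{B^\tau})^\tau<\infty$.

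With both endpoints available I would apply the interpolation inequality $\norm{t_\nu}_Z\le C\norm{t_\nu}_V^{1-\theta}\norm{t_\nu}_{B^\tau}^\theta$, valid for both $Z=[V,B^\tau]_{\theta,r}$ and $Z=[V,B^\tau]_\theta$ as in \eqref{interineq}. Since $\tilde\rho^\nu=(\hat\rho^\nu)^{1-\theta}(\rho^\nu)^\theta$, this gives pointwise in $\nu$ the bound
\[
\tilde\rho^\nu\norm{t_\nu}_Z \le C\,(\hat\rho^\nu\norm{t_\nu}_V)^{1-\theta}(\rho^\nu\norm{t_\nu}_{B^\tau})^\theta .
\]
Raising this to the power $\zeta$ and summing over $\nu$, I would apply H\"older with the conjugate exponents $s_1=2/((1-\theta)\zeta)$ and $s_2=\tau/(\theta\zeta)$. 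These are indeed conjugate exactly when $\tfrac1\zeta=\tfrac{1-\theta}2+\tfrac\theta\tau=\tfrac12+(\tfrac1\tau-\tfrac12)\theta$, which is the stated value of $\zeta$, and the two resulting factors are precisely the finite endpoint sums of the previous paragraph. This establishes the first assertion.

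For the Jacobi coefficients the proof is identical once the factor $a_\nu^{-1}$ is carried through: the weighted $\ell^2$ bound $\sum_\nu(a_\nu^{-1}\hat\rho^\nu\norm{v_\nu}_V)^2<\infty$ is the $V$-analogue of Theorem~\ref{jacobiweightedl2} from \cite[Thm.~3.1]{BCM}, while the weighted $\ell^\tau$ bound $\sum_\nu(a_\nu^{-1}\rho^\nu\norm{v_\nu}_{B^\tau})^\tau<\infty$ is furnished by Remark~\ref{remjacobiweightedl2} (estimate~\eqref{jacobiwW2p}). Since $a_\nu^{-(1-\theta)}a_\nu^{-\theta}=a_\nu^{-1}$, the same interpolation-and-H\"older step gives the second assertion. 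The only point genuinely needing care is the bookkeeping of the three weight sequences $\rho,\hat\rho,\tilde\rho$ and the check that the renamed hypotheses match those of Theorem~\ref{theotaylorwW2p}; once that matching is in place the exponent arithmetic yielding $\zeta$ is immediate and there is no remaining analytic obstacle.
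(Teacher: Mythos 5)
Your proposal is correct and follows essentially the same route as the paper: interpolation inequality between the weighted $\ell^2$ estimate in $V$ (under the $\hat\rho$-ellipticity condition) and the weighted $\ell^\tau$ estimate in $B^\tau$ from Theorem \ref{theotaylorwW2p}, closed by H\"older with the conjugate exponents $2/((1-\theta)\zeta)$ and $\tau/(\theta\zeta)$, and the same treatment of the Jacobi case via Remark \ref{remjacobiweightedl2}. Your explicit matching of the weight sequences (taking $\bar\rho_j=\hat\rho_j/\rho_j$ in Theorem \ref{theotaylorwW2p}) is exactly the bookkeeping the paper leaves implicit.
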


\begin{proof}
From the interpolation inequality \iref{interineq}, there exists $C>0$ such that 
\[
\norm{t_\nu}_Z \leq C \norm{t_\nu}_V^{1-\theta} \norm{t_\nu}^\theta_{B^\tau}\]
 for all $\nu\in\cF$. By H\"older's inequality,
\[
\begin{aligned}
	\Bigl( \sum_{\nu\in\cF} \bigl(\tilde\rho^\nu \norm{t_\nu}_Z)\bigr)^{\zeta} \Bigr)^{\frac1{\zeta}} &
	  \leq C \Bigl( \sum_{\nu\in\cF} \bigl( \hat \rho^\nu \norm{t_\nu}_V\bigr)^{(1-\theta) \zeta} \bigl( \rho^\nu \norm{t_\nu}_{B^\tau}\bigr)^{\theta \zeta} \Bigr)^{\frac1{\zeta}}  \\
	  &\leq C \Bigl( \sum_{\nu\in\cF}  \bigl( \hat \rho^\nu \norm{t_\nu}_V\bigr)^{2}  \Bigr)^{ \frac{1-\theta}{2} } \Bigl(  \sum_{\nu\in\cF} \bigl( \rho^\nu \norm{t_\nu}_{B^\tau}\bigr)^{\tau} \Bigr)^{ \frac\theta\tau },
\end{aligned}
\]
and the right-hand side is finite by Theorems \ref{theotaylorV} and \ref{theotaylorwW2p}. The analogous statement for the Jacobi coefficients follows with Remark \ref{remjacobiweightedl2}.
\end{proof}

We now verify that the interpolation spaces considered above indeed have the expected approximation properties.

\begin{prop}\label{propinterpapprox}
Let $0<\theta<1$ and $0<r\leq \infty$. 
Consider a nonlinear family $(V_n)_{n\geq 1}$ which satisfies the approximation property
\iref{spatialappn2} for $X=B^\tau$ and $t=\frac 1 m$, for some $\tau\geq 1$.
Then, for any $0<\theta<1$ and $0<r\leq \infty$, the approximation property
\be
\min_{v_n\in V_n} \|v-v_n\|_V \leq C_Z \, n^{-\theta/m} \|v\|_{Z}, \quad n > 0,
\label{spatialappn3}
\ee
holds for $Z=[V,B^\tau]_{\theta,r}$ or $Z=[V,B^\tau]_{\theta}$.
\end{prop}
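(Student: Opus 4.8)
The plan is to recognize the best $n$-term approximation error as being controlled by the Peetre $K$-functional of the pair $(V,B^\tau)$, after which the claim reduces to the very definition of the real interpolation norm together with standard embeddings between the interpolation scales. Throughout I write
\[
E_n(v) := \min_{v_n\in V_n}\|v-v_n\|_V .
\]
Since $0\in V_n$ and by the hypothesis \iref{spatialappn2}, I have at my disposal the two elementary bounds $E_n(v)\le \|v\|_V$ and $E_n(v)\le C_X\, n^{-1/m}\|v\|_{B^\tau}$ for all $n>0$, corresponding to the two endpoints $V$ and $B^\tau$ of the interpolation pair.

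First I would record the subadditivity of $n$-term approximation: for the families \iref{waveletapproxspaces} one has $V_n+V_n\subseteq V_{2n}$, since the union of two index sets each of cardinality at most $n$ has cardinality at most $2n$. Consequently, if $g_i\in V_n$ is a near-best approximation of $v_i$ for $i=0,1$, then $g_0+g_1\in V_{2n}$ and
\[
E_{2n}(v_0+v_1)\le \|v_0-g_0\|_V+\|v_1-g_1\|_V=E_n(v_0)+E_n(v_1).
\]
Combining this with the two endpoint bounds, for every decomposition $v=v_0+v_1$ with $v_0\in V$ and $v_1\in B^\tau$ I obtain $E_{2n}(v)\le \|v_0\|_V+C_X\,n^{-1/m}\|v_1\|_{B^\tau}$. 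Taking the infimum over all such decompositions yields, with $t:=C_X\,n^{-1/m}$,
\[
E_{2n}(v)\le \inf_{v=v_0+v_1}\bigl(\|v_0\|_V+t\,\|v_1\|_{B^\tau}\bigr)=K(t,v;V,B^\tau).
\]

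Next I would invoke the definition of the real interpolation norm. For $Z=[V,B^\tau]_{\theta,\infty}$ one has $K(t,v;V,B^\tau)\le t^\theta\|v\|_Z$ for all $t>0$, whence
\[
E_{2n}(v)\le \bigl(C_X\,n^{-1/m}\bigr)^\theta\|v\|_Z=C_X^\theta\, n^{-\theta/m}\|v\|_{[V,B^\tau]_{\theta,\infty}}.
\]
Since $E_n$ is non-increasing in $n$, passing from $2n$ back to $n$ costs only a constant factor, the finitely many small values of $n$ being absorbed through the continuous embedding $Z\hookrightarrow V$ and the bound $E_n(v)\le\|v\|_V$. This gives \iref{spatialappn3} for $Z=[V,B^\tau]_{\theta,\infty}$. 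Finally, for any $0<r\le\infty$ there is a continuous embedding $[V,B^\tau]_{\theta,r}\hookrightarrow [V,B^\tau]_{\theta,\infty}$, and for the complex method the embedding $[V,B^\tau]_\theta\hookrightarrow [V,B^\tau]_{\theta,\infty}$ (see \cite{BL}); feeding either norm into the last display yields \iref{spatialappn3} with $C_Z$ depending only on $\theta$, $r$, $C_X$, and the embedding constants.

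The only genuinely delicate point is the passage to the $K$-functional, which rests on the additivity $V_n+V_n\subseteq V_{2n}$; this is why the statement is meaningful for nonlinear $n$-term families of the type \iref{waveletapproxspaces} rather than for an arbitrary family of sets satisfying \iref{spatialappn2}. Once this subadditivity is secured, the rest is the standard identification of a Jackson-type approximation estimate with membership in an interpolation space, and only the well-known endpoint embeddings relating the real scale $(\cdot)_{\theta,r}$ and the complex space $[\cdot]_\theta$ to $(\cdot)_{\theta,\infty}$ remain to be quoted from \cite{BL}.
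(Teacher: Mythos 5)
Your proof is correct, and its core is the same as the paper's: bound the best $n$-term error by the $K$-functional $K(t,v;V,B^\tau)$ with $t=C_X n^{-1/m}$, invoke the defining inequality $K(t,v)\leq t^\theta\|v\|_{[V,B^\tau]_{\theta,\infty}}$, and finish with the embeddings of $[V,B^\tau]_{\theta,r}$ and $[V,B^\tau]_\theta$ into $[V,B^\tau]_{\theta,\infty}$. The one place you diverge is in how you reach the $K$-functional: you invoke the additivity $V_n+V_n\subseteq V_{2n}$ and then have to repair the resulting shift from $2n$ back to $n$. This detour is unnecessary. Since your near-best approximant of the $V$-part is $g_0=0$, the sum $g_0+g_1=g_1$ already lies in $V_n$; equivalently, the paper simply writes, for any decomposition $v=v_0+w$, $\min_{v_n\in V_n}\|v-v_n\|_V\leq\|v_0\|_V+\min_{v_n\in V_n}\|w-v_n\|_V\leq\|v_0\|_V+C_Xn^{-1/m}\|w\|_{B^\tau}$ by the triangle inequality, and takes the infimum over $w$. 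Consequently your closing remark --- that the subadditivity $V_n+V_n\subseteq V_{2n}$ is the delicate point and the reason the statement is restricted to $n$-term families --- is not accurate: the argument goes through for an arbitrary family of subsets satisfying \iref{spatialappn2}, with $C_Z=C_X^\theta$ and no loss in $n$.
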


\begin{proof} 
Since for any admissible pair $(V,X)$ of Banach spaces, one has
\be
[V,X]_{\theta,1}\subset [V,X]_{\theta}\subset [V,X]_{\theta,\infty},
\ee
it is sufficient to consider $Z=[V,B^\tau]_{\theta,\infty}$. 
Now for any $v\in V$ and $w\in X$, we may write
\be
\min_{v_n\in V_n} \|v-v_n\|_V \leq \|v-w\|_V+\min_{v_n\in V_n} \|w-v_n\|_V
\leq \|v-w\|_V+C_X n^{-1/m} \|w\|_{X}.
\ee
Therefore
\be
\min_{v_n\in V_n} \|v-v_n\|_V \leq \min_{w\in X}\{\|v-w\|_V+C_X n^{-1/m} \|w\|_{X}\}.
\ee
The right-hand side is the $K$-functional $K(v,C_X n^{-1/m},V,X)$, which by definition
of interpolation spaces satisfies
\be
K(v,C_X n^{-1/m},V,X)\leq C_X^\theta n^{-\theta/m} \|v\|_{Z}
\ee
 when $v\in Z$, thereby proving \iref{spatialappn3} with $C_Z:=C_X^\theta$.
\end{proof}
\section{Multiresolution representation of $a(y)$}
\label{sec:MRA}
We finally illustrate our results in the particular 
case of an affine wavelet-type parametrization of the coefficient $a(y)$. 
Our focus will be on Jacobi expansions in $L^2(U,V,\sigma)$. Concerning analogous results for 
Taylor approximation in $L^\infty(U,V)$, see Remark \ref{wavLinfty}. 
We refer to \cite{Co} for a general treatment of
wavelets and their adaptation to a bounded domain $D\subset \R^m$, and
summarize below the main properties that are needed in our analysis.

\begin{ass}\label{asswavelets}
Let $(\psi_\lambda)_{\lambda\in\cS}$ be a family of wavelet basis functions, where the scale-space indices $\lambda$ comprise dilation and translation parameters, with the convention that the scale $l$
of $\psi_\lambda$ is denoted by $|\lambda|:= l$, and where the number of wavelets on level $\abs{\lambda}=l$ is proportional to $2^{ml}$. 
Moreover, we assume the wavelets at each given scale to have finite overlap, 
that is, there exists $M>0$ such that for all $x\in D$ and $l$,
\begin{equation}
\label{finiteoverlap}
\#\{\lambda \, : \, \abs{\lambda}=l \; {\rm and} \;\psi_\lambda(x)\neq 0\} \leq M.
\end{equation}
For simplicity, we take $\bar a = 1$, assume $D$ and $f$ to be smooth, 
and fix an ordering $(\lambda(j))_{j\geq 1}$ of the indices
from coarser to finer scales, for which we set $\psi_j := \psi_{\lambda(j)}$. 
We consider wavelets with $\psi_\lambda \in W^{\kappa,\infty}(D)$ for some $\kappa \in \N$, 
that are normalized such that
\be
\|\psi_\lambda\|_{L^\infty} =C 2^{-\alpha \abs{\lambda}}, 
\ee
for an $\alpha \in (0, \kappa)$ and $C>0$ chosen such that \eqref{uea} holds. The
partial derivatives behave like
\be
\|D^\mu\psi_\lambda\|_{L^\infty} \sim 2^{-(\alpha-|\mu|) \abs{\lambda}},
\ee
for $|\mu| \leq \kappa$.
\end{ass}

For ease of exposition, we assume in what follows that $\alpha$ is not an integer.
Under the above assumptions, for $0< \beta < \alpha$ we have 
\be
\sup_{x\in D}\sum_{\lambda} 2^{\beta |\lambda|} |\psi_\lambda(x)| <\infty
\ee
as a consequence of \eqref{finiteoverlap}.
Therefore, with 
\be\label{waveletrhoj}
 \rho_j := 1 + c 2^{\beta \abs{\lambda(j)}} \sim j^{\beta/m}
\ee
with $c>0$ is sufficiently small, we have \eqref{uearho}, and thus obtain the following from Theorem \ref{theotaylorV}.

\begin{prop}
\label{waveletV}
Under Assumptions \ref{asswavelets} one has $(\|v_\nu\|_V)_{\nu\in \cF} \in \ell^p(\cF)$ for any $p > (\frac\alpha{m} + \frac12)^{-1}$, and consequently
\begin{equation}\label{wavnterm}
\Bignorm{ u- \sum_{\nu\in\Lambda_n} v_\nu J_\nu }_{L^2(U,V,\sigma)} \leq  C \,n^{-s}, 
\end{equation}
holds for any $s < \frac\alpha{m}$. Here, $C:=\|(\|v_\nu\|_V)_{\nu\in \cF}\|_{\ell^p}$ 
with $s=\frac 1 p-\frac 1 2$, and $\Lambda_n$
is the set of indices $\nu$ corresponding to the $n$ largest $\|v_\nu\|_V$.
\end{prop}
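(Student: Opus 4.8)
The plan is to apply Theorem \ref{theotaylorV} to the weight sequence $\rho_j = 1 + c\,2^{\beta\abs{\lambda(j)}}$ for a fixed $\beta\in(0,\alpha)$, and then optimize by letting $\beta$ approach $\alpha$. The uniform ellipticity \iref{uearho} for this $\rho$ has already been verified in the discussion preceding the statement: the finite-overlap property \iref{finiteoverlap} bounds $\sum_\lambda 2^{\beta\abs{\lambda}}\abs{\psi_\lambda}$ in $L^\infty$ by a geometric sum in $2^{(\beta-\alpha)\abs{\lambda}}$ (convergent since $\beta<\alpha$), and the normalization constant together with $c>0$ small enough then forces $\theta<1$. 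So the only remaining hypothesis of Theorem \ref{theotaylorV} to check is the summability of $(\rho_j^{-1})_{j\geq 1}$.

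First I would record that summability. Since the number of wavelets at level $l$ is proportional to $2^{ml}$, ordering indices from coarse to fine gives $j\sim 2^{m\abs{\lambda(j)}}$, whence $\rho_j\sim j^{\beta/m}$ as asserted in \iref{waveletrhoj}. Therefore $\sum_{j\geq 1}\rho_j^{-q}\sim \sum_{j\geq 1} j^{-q\beta/m}$, which converges precisely when $q>m/\beta$; for every such $q$ we have $(\rho_j^{-1})_{j\geq 1}\in\ell^q(\N)$. Theorem \ref{theotaylorV} then yields $(\norm{v_\nu}_V)_{\nu\in\cF}\in\ell^p(\cF)$ for $\frac1p=\frac1q+\frac12$. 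As $q$ ranges over $(m/\beta,\infty)$, the quantity $\frac1p$ ranges over $(\frac12,\frac\beta m+\frac12)$, so the $\ell^p$ summability holds for every $p>(\frac\beta m+\frac12)^{-1}$.

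Next I would let $\beta\uparrow\alpha$. Given any $p>(\frac\alpha m+\frac12)^{-1}$, that is $\frac1p-\frac12<\frac\alpha m$, one can pick $\beta<\alpha$ with $\frac\beta m>\frac1p-\frac12$, and the preceding step gives the desired $\ell^p$ summability. The admissible constant $c$ shrinks as $\beta\uparrow\alpha$ (the $L^\infty$ bound on $\sum_\lambda 2^{\beta\abs{\lambda}}\abs{\psi_\lambda}$ degenerates in this limit), but this is harmless, since each fixed $\beta$ supplies its own valid weight sequence and the conclusion for that $\beta$ stands on its own. Finally, for the convergence rate, since $(J_\nu)_{\nu\in\cF}$ is orthonormal in $L^2(U,\sigma)$, identity \iref{ell2error} together with the classical best $n$-term estimate of \cite{De} gives $\norm{u-\sum_{\nu\in\Lambda_n} v_\nu J_\nu}_{L^2(U,V,\sigma)}\leq Cn^{-s}$ with $s=\frac1p-\frac12$ when $\Lambda_n$ indexes the $n$ largest $\norm{v_\nu}_V$; for any $s<\alpha/m$, taking $p$ with $\frac1p=s+\frac12$ lands in the admissible range and yields \iref{wavnterm}. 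I expect no genuine obstacle here: the substance is entirely contained in Theorem \ref{theotaylorV}, and the remaining work is the bookkeeping of the two limits $q\downarrow m/\beta$ and $\beta\uparrow\alpha$.
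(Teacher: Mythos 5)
Your argument is correct and coincides with the paper's own (largely implicit) proof: verify \eqref{uearho} for $\rho_j = 1+c\,2^{\beta\abs{\lambda(j)}}\sim j^{\beta/m}$ via the finite-overlap bound, apply Theorem \ref{theotaylorV} with $(\rho_j^{-1})\in\ell^q(\N)$ for $q>m/\beta$, let $\beta\uparrow\alpha$, and conclude the rate from the standard best $n$-term estimate \eqref{ell2error}. No gaps.
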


\subsection{Sobolev regularity}\label{sec:sobolevregexample}

In order to make use of our results for higher-order spatial regularity, we observe that for any $\bar\beta < \alpha - (k-1)$ and 
$\bar\rho_j := 2^{\bar\beta \abs{\lambda(j)}} \sim j^{\bar\beta/m}$, also
\begin{equation}
 \sup_{|\mu|\leq k-1} \Bignorm{  \sum_{j\geq 1} \bar\rho_j \abs{D^\mu \psi_j(x)}  }_{L^\infty} < \infty.
\end{equation}
Based on Theorems \ref{theotaylorHm} and \ref{theojacobiHm}, which concern integer-order Sobolev regularity, and on Theorem \ref{theoconvrate}, we obtain the following conclusions for the fully discrete approximations.

\begin{cor}\label{corwaveletHk}
	Let Assumptions \ref{asswavelets} hold with $\alpha>1$. Let $k\in\{2,\ldots,\ceil{\alpha}\}$, and let the spatial approximation spaces $(V_n)_{n>0}$ satisfy the approximation property
	\begin{equation} \label{wvspapp}
 \min_{w_n\in V_n} \|w-w_n\|_V \leq C_{k} n^{-t}\|w\|_{H^k},  
	 \end{equation}
with $t = \frac{k-1}m$.
Then for any $r < \frac\alpha{2m}$, there exists a constant $C>0$ such that
the following holds: for each $n$ there exists $\bn = (n_\nu)_{\nu\in\Lambda_n}$  
such that
\be\label{waveletconvHk}
	\min_{u_{\bn}\in V_{\bn}} \|u-u_{\bn}\|_{L^2(U,V,\sigma)} \leq C N^{-\min\{r,t\}},
\ee
where $N := \sum_{\nu\in\Lambda_n} n_\nu={\rm dim}(V_{\bn})$.
\end{cor}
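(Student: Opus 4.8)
The plan is to produce the two summability exponents required by Theorem \ref{theoconvrate}, one for $(\|v_\nu\|_V)_{\nu\in\cF}$ and one for $(\|v_\nu\|_{H^k})_{\nu\in\cF}$, and then to feed them into the rate formula, verifying that the resulting rate can be pushed up to the claimed value. For the first exponent I would simply quote Proposition \ref{waveletV}: it gives $(\|v_\nu\|_V)_{\nu\in\cF}\in\ell^{p_V}(\cF)$ for every $p_V>(\frac\alpha m+\frac12)^{-1}$, so that the parameter $s=\frac1{p_V}-\frac12$ entering Theorem \ref{theoconvrate} may be chosen arbitrarily close to $\frac\alpha m$ from below.

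For the higher-order exponent I would fix $\bar\beta\in(0,\alpha-(k-1))$ — a nonempty interval precisely because $k\leq\ceil\alpha$ with $\alpha\notin\N$ forces $k-1<\alpha$ — and set $\bar\rho_j:=1+c\,2^{\bar\beta|\lambda(j)|}\sim j^{\bar\beta/m}$ with $c>0$ small. The finite-overlap property \eqref{finiteoverlap} combined with $\|D^\mu\psi_\lambda\|_{L^\infty}\sim2^{-(\alpha-|\mu|)|\lambda|}$ makes condition \eqref{sumderrho} hold for these weights (this is the displayed estimate just before the statement), while a sufficiently small $c$ enforces the uniform ellipticity \eqref{uearho}; the remaining hypotheses of Theorem \ref{theojacobiHm} (which for $k=2$ rests on Theorem \ref{theotaylorH2} and for $k>2$ on Theorem \ref{theotaylorHm}) are in force since $\bar a=1$, the domain $D$ and $f$ are smooth, and $\alpha<\kappa$ yields $\psi_j\in W^{k-1,\infty}(D)$ with $W^k=H^k$. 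This gives $(\|v_\nu\|_{H^k})_{\nu\in\cF}\in\ell^{p_X}(\cF)$ with $\frac1{p_X}=\frac1q+\frac12$ for any $q$ with $(\bar\rho_j^{-1})_{j\geq1}\in\ell^q(\N)$. As $\bar\rho_j^{-1}\sim j^{-\bar\beta/m}$, this holds for every $q>m/\bar\beta$, so $p_X$ may be taken arbitrarily close to $(\frac{\alpha-(k-1)}m+\frac12)^{-1}$ from above by letting $\bar\beta\uparrow\alpha-(k-1)$ and $q\downarrow m/\bar\beta$.

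With both exponents available I would apply Theorem \ref{theoconvrate} in its $L^2(U,V,\sigma)$ form with $t=\frac{k-1}m$, which delivers the rate $\min\{\hat r,t\}$ where $\hat r$ is the quantity \eqref{rsecondcase}. The decisive computation is that, inserting the limiting values $s\to\frac\alpha m$ and $\frac1{p_X}-\frac12\to\frac{\alpha-(k-1)}m$, the denominator $s+t+\frac12-\frac1{p_X}$ collapses to $\frac{2(k-1)}m=2t$, so that $\hat r\to\frac{st}{2t}=\frac s2\to\frac\alpha{2m}$. This limit is independent of $k$: the gain in the spatial rate $t=\frac{k-1}m$ as $k$ grows is exactly offset by the loss of summability in $p_X$. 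Consequently $\hat r$ can be driven above any value strictly below $\frac\alpha{2m}$. Given a target $r<\frac\alpha{2m}$ as in the statement, I would choose $p_V$ and $\bar\beta$ so that $\hat r\geq r$; then $\min\{\hat r,t\}\geq\min\{r,t\}$, and \eqref{waveletconvHk} follows.

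I expect the main obstacle to be the rate bookkeeping rather than the verification of hypotheses, which is routine given the wavelet scaling and finite-overlap estimates. The cancellation that makes $\hat r$ tend to $\frac\alpha{2m}$ independently of $k$ is the crux, and one must track it through both regimes of Theorem \ref{theoconvrate}: the case $p_X\leq\frac2{2t+1}$, arising for the optimal choice when $k-1<\alpha/2$, where the rate is simply $t$, and the complementary case where it is $\hat r$. The strict inequalities $\bar\beta<\alpha-(k-1)$, $q>m/\bar\beta$ and $p_V>(\frac\alpha m+\frac12)^{-1}$ only permit the value $\frac\alpha{2m}$ to be approached, never attained, which is exactly why the statement quantifies over all $r<\frac\alpha{2m}$.
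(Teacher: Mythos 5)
Your proposal is correct and follows essentially the same route as the paper: Proposition \ref{waveletV} for the $V$-summability, the weight sequence $\bar\rho_j\sim 2^{\bar\beta|\lambda(j)|}$ with $\bar\beta<\alpha-(k-1)$ fed into Theorem \ref{theojacobiHm} for the $H^k$-summability, and then the limit computation in \eqref{rsecondcase} showing the denominator collapses to $2t$ so that $\hat r\to\frac{\alpha}{2m}$ independently of $k$. Your explicit handling of the single sequence satisfying both \eqref{uearho} (via small $c$) and \eqref{sumderrho}, and of the two regimes of Theorem \ref{theoconvrate}, is if anything slightly more careful than the paper's own sketch.
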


\begin{proof}
	Theorems \ref{theotaylorHm} and \ref{theojacobiHm} can be applied when $k - 1 < \alpha$ with integer $k\geq 2$. Since by assumption $\alpha$ is not an integer, the largest value for $k$ that we can use is thus $k = \ceil{\alpha}$.
	Then Theorems \ref{theotaylorHm} and \ref{theojacobiHm} yield $(\norm{v_\nu}_{H^k})_{\nu\in \cF}\in\ell^{\bar p}(\cF)$ for $\bar p > (\frac{\bar\beta}m + \frac12)^{-1} > (\frac{\alpha - k + 1}m + \frac12)^{-1}$. 
By Theorem \ref{theoconvrate} with $\cV = L^2(U,V,\sigma)$, we obtain the overall convergence rate $\min\{r,t\}$, where
\be
\label{rateL2}
   r = \frac{st}{t + s + \frac12 -\bar p^{-1}}.
\ee
Taking $s\to \frac\alpha{m}$ and $\bar p \to (\frac{\alpha - k + 1}m + \frac12)^{-1}$,  we obtain $r \to \frac\alpha{2m}$.
\end{proof}

\begin{remark}\label{rmrkwaveletconvHk}
Provided that the approximation property in \eqref{wvspapp} holds for sufficiently large $k$, one has $\min\{r,t\} = r$. For instance, as soon as $1 + \frac12 \alpha \leq k \leq \ceil{\alpha}$ (where $\alpha>1$), the conclusion in the above corollary reads as follows:
for any $\delta >0$, there exists a constant $C>0$ such that,
for each $n$, there exists $\bn = (n_\nu)_{\nu\in\Lambda_n}$  
such that
\be\label{waveletconvHk2}
	\min_{u_{\bn}\in V_{\bn}} \|u-u_{\bn}\|_{L^2(U,V,\sigma)} \leq C N^{-\frac \alpha{2m}+\delta},
\ee
where $N := \sum_{\nu\in\Lambda_n} n_\nu={\rm dim}(V_{\bn})$.
Conversely, one has $\min\{ r, t\} = t$ in \eqref{waveletconvHk} only when an artificially low value of $k$ is used. This corresponds to the situation when one is limited to a certain order of spatial approximation $t$ that is lower than what could in principle be exploited for the given $\alpha$.
\end{remark}

We now show that the restriction to $\alpha>1$ in Corollary \ref{corwaveletHk} can be removed using the interpolation argument from Corollary \ref{theotaylorfrac}. Under Assumptions \ref{asswavelets}, the hypotheses of this corollary are satisfied for $k := \ceil{\alpha} + 1$, and by our smoothness assumption on $D$, for $0<\theta<1$ we have
\[
 Z:=[V, W^k]_{\theta,2} = [V, V\cap H^k(D)]_{\theta,2} = V\cap H^{1+\theta (k-1)}(D).
\]
On the one hand, we take $\rho_j = 1 + c 2^{\beta \abs{\lambda(j)}}$ as in \eqref{waveletrhoj} with positive $\beta < \alpha$ and sufficiently small $c>0$ to ensure \eqref{uearho}.
On the other hand, for $\bar\beta < \alpha - \ceil{\alpha} < 0$, taking $\bar\rho_j :=  c^{(\theta-1)/\theta} 2^{\bar\beta\abs{\lambda(j)}}$ we also have
\begin{equation}
  \Bignorm{  \sum_{j\geq 1} \bar\rho_j \abs{D^\mu \psi_j(x)}  }_{L^\infty} < \infty, \quad \abs{\mu} \leq k-1 = \ceil{\alpha}.
\end{equation}
Applying Corollary \ref{theotaylorfrac} and Remark \ref{fracjacobi}, we obtain $\tilde\rho_j > 1$ in \eqref{fracjacobibound} provided that $\theta$ is chosen to satisfy $(1-\theta)\beta + \theta\bar\beta >0$, which is ensured to be achievable provided that 
\be
\label{thetabound}
0 < \theta < \alpha / \ceil{\alpha} .
\ee
With suitably chosen $\beta$, $\bar\beta$ we then obtain $(\norm{v_\nu}_Z)_{\nu\in\cF} \in \ell^{\bar p}(\cF)$ for any $\bar p$ such that
\[
  \bar p^{-1} < \frac{\alpha - \theta\ceil{\alpha}}{m}  + \frac12.
\]
In Theorem \ref{theoconvrate}, for elements of $Z$ we can achieve the spatial rate $t = \theta\ceil{\alpha}/m$. 
As $\theta \to \alpha / \ceil{\alpha}$, we thus obtain the following conclusion, extending Corollary \ref{corwaveletHk} to $\alpha \in ]0,1[$.

\begin{cor}\label{corwaveletHkinterp}
Let \eqref{wvspapp} hold with $k\geq 2$, and let Assumptions \ref{asswavelets} hold with $0 < \alpha <1$.
Then, for any $\delta >0$, there exists a constant $C>0$ such that
the following holds: for each $n$ there exists $\bn = (n_\nu)_{\nu\in\Lambda_n}$  such that
\begin{equation}
	\min_{u_{\bn}\in V_{\bn}} \|u-u_{\bn}\|_{L^2(U,V,\sigma)} \leq C N^{-\frac\alpha{2m}+\delta}
\end{equation}
where $N := \sum_{\nu\in\Lambda_n} n_\nu={\rm dim}(V_{\bn})$.

\end{cor}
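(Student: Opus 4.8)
The plan is to invoke the abstract convergence-rate result, Theorem~\ref{theoconvrate}, with $\cV=L^2(U,V,\sigma)$, so that the whole task reduces to producing two summability inputs together with a matching spatial approximation rate. The first input, $(\|v_\nu\|_V)_{\nu\in\cF}\in\ell^{p_V}(\cF)$ with $s:=p_V^{-1}-\tfrac12$ arbitrarily close to $\alpha/m$, is already supplied by Proposition~\ref{waveletV} and needs no further work. The difficulty is the second input: a regularity class $Z\subset V$ with a spatial rate $t$ and a summability exponent $p_X$ for $(\|v_\nu\|_Z)_{\nu\in\cF}$. Since $0<\alpha<1$ forces $\ceil{\alpha}=1$, one cannot feed an integer Sobolev space $H^k$ with $k\geq 2$ into Theorems~\ref{theotaylorHm} and~\ref{theojacobiHm}: their hypotheses demand a weight $\bar\rho$ with $\bar\rho_j>1$ controlling the first spatial derivatives of the $\psi_j$, which is incompatible with $\alpha<1$. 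I would therefore take $Z$ to be the \emph{fractional} space $[V,W^2]_{\theta,2}=V\cap H^{1+\theta}(D)$ and argue by interpolation.

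The heart of the argument is the construction of the two weights entering Corollary~\ref{theotaylorfrac} and its Jacobi analog, Remark~\ref{fracjacobi}. For the $V$-side I would take $\rho_j=1+c\,2^{\beta\abs{\lambda(j)}}$ as in~\eqref{waveletrhoj}, with $0<\beta<\alpha$ and $c>0$ small enough that~\eqref{uearho} holds; for the $W^2$-side I would take $\bar\rho_j=c^{(\theta-1)/\theta}2^{\bar\beta\abs{\lambda(j)}}$ with $\bar\beta<\alpha-1<0$, which by the finite-overlap and decay estimates of Assumptions~\ref{asswavelets} renders $\sup_{\abs{\mu}\leq1}\norm{\sum_j\bar\rho_j\abs{D^\mu\psi_j}}_{L^\infty}$ finite. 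The interpolated weight $\tilde\rho_j=\rho_j^{1-\theta}\bar\rho_j^\theta\sim 2^{((1-\theta)\beta+\theta\bar\beta)\abs{\lambda(j)}}$ then satisfies $\tilde\rho_j>1$ precisely when its growth exponent $(1-\theta)\beta+\theta\bar\beta$ is positive, which is achievable exactly under~\eqref{thetabound}, i.e.\ $0<\theta<\alpha$. A H\"older estimate of the type used in the proof of Theorem~\ref{theojacobiHm}, absorbing the Jacobi normalizers $a_\nu$ through the polynomial bound $c^{\alpha,\beta}_k\leq 1+ck^t$, converts the weighted $\ell^2$ bound~\eqref{fracjacobibound} into $(\|v_\nu\|_Z)_{\nu\in\cF}\in\ell^{\bar p}(\cF)$ for any $\bar p$ with $\bar p^{-1}<\tfrac{\alpha-\theta}{m}+\tfrac12$.

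It remains to record the spatial rate for $Z$. Since~\eqref{wvspapp} holds with some $k\geq2$, the underlying finite element degree is at least $1$, so the same spaces approximate $H^2$ at rate $1/m$; interpolating this against the trivial rate for $V=H^1$ (the $K$-functional argument of Proposition~\ref{propinterpapprox}, applied here to the Sobolev scale in place of $B^\tau$) yields $t=\theta/m$ for $Z=V\cap H^{1+\theta}(D)$. Substituting $s\to\alpha/m$, $t=\theta/m$ and $p_X^{-1}\to\tfrac{\alpha-\theta}{m}+\tfrac12$ into~\eqref{rsecondcase}, the denominator collapses to $s+t+\tfrac12-p_X^{-1}\to 2\theta/m$, whence $r\to\alpha/(2m)$. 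Choosing $\theta\in(\alpha/2,\alpha)$ guarantees $t\geq r$, so the exponent $\min\{r,t\}$ in Theorem~\ref{theoconvrate} equals $r$; letting $\theta\uparrow\alpha$, $\beta\uparrow\alpha$, $\bar\beta\uparrow\alpha-1$ and $s\uparrow\alpha/m$ drives $r$ arbitrarily close to $\alpha/(2m)$, producing the claimed bound $N^{-\alpha/(2m)+\delta}$ for any prescribed $\delta>0$.

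The main obstacle I anticipate lies inside the weight construction: $V$-summability wants $\rho$ as large as possible ($\beta$ up to $\alpha$), while the first-order derivative control forces $\bar\rho$ to \emph{decay} ($\bar\beta<\alpha-1$), so the interpolated weight $\tilde\rho$ grows — and thus yields usable $\ell^q$ decay of $(\tilde\rho_j^{-1})$ — only in the restricted range $\theta<\alpha$. Checking that this range is nonempty and that the exponent balance in~\eqref{rsecondcase} nonetheless returns exactly $\alpha/(2m)$, rather than a strictly smaller rate, is the delicate bookkeeping; the careful tracking of the Jacobi factors $a_\nu$ through the H\"older step is the only further technical point.
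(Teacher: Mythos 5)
Your proposal is correct and follows essentially the same route as the paper: interpolation between $V$ and $W^2$ with the two weight sequences $\rho_j=1+c2^{\beta|\lambda(j)|}$ and $\bar\rho_j=c^{(\theta-1)/\theta}2^{\bar\beta|\lambda(j)|}$, $\bar\beta<\alpha-1$, the constraint $0<\theta<\alpha$ from requiring $\tilde\rho_j>1$, and the cancellation in \eqref{rsecondcase} giving $r\to\alpha/(2m)$ with $t=\theta/m$ dominating for $\theta>\alpha/2$. The paper phrases this for general $\alpha$ via $k=\lceil\alpha\rceil+1$, which reduces to your $k=2$ case when $0<\alpha<1$; otherwise the arguments coincide.
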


\begin{remark}\label{wavLinfty}
	For obtaining rates in $L^\infty(U,V)$, we need to additionally require $\alpha>\frac{m}2$. One then has \eqref{wavnterm} for any $s < \frac\alpha{m} - \frac12$, and analogously to Corollaries \ref{corwaveletHk} and \ref{corwaveletHkinterp}, one obtains \eqref{waveletconvHk} also for Taylor approximation with any $r < \frac12 \left( \frac\alpha{m} - \frac12  \right)$.
\end{remark}

\begin{remark}
	The limiting rate $\frac{\alpha}{2m}$ for approximation in $L^2(U,V,\sigma)$, which we obtain here using optimized $\nu$-dependent spatial discretizations, can also be achieved by a simpler construction using a single $\nu$-independent spatial discretization space. This can be seen as follows: Under our present assumptions, $\sup_{y\in U} \norm{a(y)}_{C^\alpha} < \infty$ and consequently $\sup_{y\in U} \norm{u(y)}_{H^{1+\beta}} < \infty$ for any positive $\beta< \alpha$. Hence for the Jacobi coefficients, we have
	\[
	    \sum_{\nu\in\cF} \norm{v_\nu}_{H^{1+\beta}}^2 = \int_{U} \norm{ u(y) }_{H^{1+\beta}}^2\,d\sigma(y) < \infty.
	\]
	For $n\in\N$, as in \S\ref{sec:SpcParDiscr} let $\Lambda_n\subset \cF$ 
        be a subset of $n$ largest $\norm{v_\nu}_{V}$. 
        Now choose the vector $\bn$ as in \eqref{bndof} as $n_\nu = \hat n$ 
        with some fixed $\hat n$ for all $\nu\in\Lambda_n$.
	With an appropriate sequence of spatial approximation spaces $(V_n)_{n>0}$, we then obtain
	\begin{equation*}
	   \norm{ u - u_\bn }_\cV \leq \Bignorm{ \sum_{\nu\in\cF} (v_\nu - v_{\nu,\hat n}) J_\nu }_\cV + \Bignorm{ \sum_{\nu \notin \Lambda_n} v_{\nu,\hat n} J_\nu }_\cV 
	     \leq C \hat n^{-t} \Bigl( \sum_{\nu\in\cF} \norm{v_\nu}_{H^{1+\beta}}^2 \Bigr)^{\frac12} + C n^{-s},
	\end{equation*}
	with $t = \frac{\beta}m < \frac{\alpha}m$ and any $s < \frac{\alpha}m$. 
        In this case, the total number of coefficients is simply $N = n \hat n$, 
        which means that with appropriately chosen $n$, $\hat n$ we obtain
	\begin{equation}
		\norm{ u - u_\bn }_\cV \leq C N^{-\frac\alpha{2m} + \delta}
	\end{equation}
	for any $\delta>0$, and hence the same asymptotic convergence as in Corollaries \ref{corwaveletHk} and \ref{corwaveletHkinterp}. In other words, in the present setting, individual optimization of discretization spaces for each Jacobi coefficient does not improve the achievable convergence rate compared to a single spatial discretization for all coefficients.
\end{remark}

In summary, under Assumptions \ref{asswavelets}, 
provided that spatial approximation by linear methods of sufficiently high order is used (e.g., based on a predefined hierarchy of finite element meshes), 
the limiting spatial-parametric rate is \emph{half} the limiting convergence 
rate of the parametric expansion.

\subsection{Space-parameter adaptivity}\label{sec:adaptexample}

We next consider the convergence rates resulting from a combination of Theorem \ref{theoconvrate} with Theorem \ref{theotaylorwW2p} and Corollary \ref{cortaylorwW2pinterp}, under the particular Assumptions \ref{asswavelets} on a wavelet-type expansion of the coefficient $a$. To simplify the exposition, we focus here on Jacobi expansions in $L^2(U,V,\sigma)$ with spatial dimensions $m=2,3$.

\begin{cor}\label{corwavnonlin}
Let Assumptions \ref{asswavelets} hold, and let $\tau > 1$ if $m=2$ and $\tau \geq \frac65$ if $m=3$. Assume that the approximation property \iref{spatialappn2}
holds for a nonlinear family $(V_n)_{n\geq 1}$ such as \iref{waveletapproxspaces},
with $X = B^\tau$ and $t=\frac1m$ when $\alpha > 1$, or otherwise
that the approximation property \iref{spatialappn3} holds with $Z = [V,B^\tau]_\theta$
or $ [V,B^\tau]_{\theta,r}$ for $0<\theta<\alpha < 1$ and $0<r\leq\infty$.
Then, for any $\delta>0$ and $\alpha>0$,  there exists a constant $C>0$ such that
the following holds: for each $n$ there exists $\bn = (n_\nu)_{\nu\in\Lambda_n}$,
such that, when $0< \alpha <1$, 
\begin{equation}\label{wvnonlin1}
	 \norm{ u - u_\bn }_{L^2(U,V,\sigma)} \leq C N^{-\frac\alpha{m} + \delta},
\end{equation}
and when $\alpha>1$,
\begin{equation}\label{wvnonlin2}
	 \norm{ u - u_\bn }_{L^2(U,V,\sigma)} \leq C N^{-\frac1{m} },
\end{equation}
where $N:=\sum_{\nu\in\Lambda_n} n_\nu$.
\end{cor}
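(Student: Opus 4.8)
The plan is to feed the summability results of \S\ref{sec:adaptexample} for the higher-order norms of the Jacobi coefficients, together with the $V$-summability from Proposition \ref{waveletV}, into the abstract space-parameter rate of Theorem \ref{theoconvrate} with $\cV=L^2(U,V,\sigma)$, and then to optimize the free parameters so that the spatial rate becomes the operative one. Throughout I would use the following dictionary, which follows from the finite-overlap property \eqref{finiteoverlap} and the decay $\|D^\mu\psi_\lambda\|_{L^\infty}\sim 2^{-(\alpha-|\mu|)|\lambda|}$: a weight $\rho_j\sim 2^{\gamma|\lambda(j)|}\sim j^{\gamma/m}$ yields a finite $L^\infty$ bound $\|\sum_j\rho_j|D^\mu\psi_j|\|_{L^\infty}<\infty$ exactly when $\gamma<\alpha-|\mu|$, while $(\rho_j^{-1})_{j\geq1}\in\ell^q(\N)$ exactly when $\gamma q>m$. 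Recall also that Proposition \ref{waveletV} gives $(\|v_\nu\|_V)_{\nu\in\cF}\in\ell^{p_V}$ for every $p_V^{-1}<\alpha/m+\tfrac12$, hence $s=p_V^{-1}-\tfrac12$ may be taken arbitrarily close to $\alpha/m$.

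For the regime $\alpha>1$ I would work with $X=B^\tau=W^{2,\tau}(D)$ and spatial rate $t=\tfrac1m$, applying the Jacobi version \eqref{jacobiwW2p} of Theorem \ref{theotaylorwW2p} (Remark \ref{remjacobiweightedl2}). The first-order condition \eqref{taylorwW2pweights} forces the gradient weight $\rho_j\sim j^{\beta/m}$ to satisfy $\beta<\alpha-1$, while the double condition \eqref{uearhodouble} is arranged with a second weight $\bar\rho_j\sim 2^{\bar\beta|\lambda(j)|}$, the requirements $\bar\beta+\beta<\alpha$ and $\bar\beta>m(\tfrac1\tau-\tfrac12)$ being simultaneously solvable precisely under the stated restrictions on $\tau$. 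A Hölder estimate of the type \eqref{holdertau} then yields $(\|v_\nu\|_{B^\tau})_{\nu\in\cF}\in\ell^{p_X}$ for any $p_X^{-1}<(\alpha-1)/m+1/\tau$. Taking $\tau$ at the lower end of the admissible range (so $\tau\downarrow1$ for $m=2$ and $\tau=\tfrac65$ for $m=3$, which is legitimate since the nonlinear family realizes rate $\tfrac1m$ on all these nested $B^\tau$ simultaneously), one checks that $p_X^{-1}>\tfrac12+\tfrac1m$ for every $\alpha>1$, i.e.\ $p_X<\tfrac{2}{2t+1}$. This is the first case of the $L^2$ discussion of \S\ref{sec:SpcParDiscr}, for which Theorem \ref{theoconvrate} gives the exact rate $\min\{r,t\}=t=\tfrac1m$, proving \eqref{wvnonlin2} with no loss.

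For the regime $0<\alpha<1$ the gradient condition can only be met with $\beta<\alpha-1<0$, so I would invoke the interpolation result Corollary \ref{cortaylorwW2pinterp} together with its Jacobi analogue (Remark \ref{fracjacobi}), with $Z=[V,B^\tau]_\theta$ or $[V,B^\tau]_{\theta,r}$ and $0<\theta<\alpha$. Pairing the gradient weight $\rho_j\sim j^{\beta/m}$ with the ellipticity weight $\hat\rho_j\sim j^{\hat\beta/m}$, $\hat\beta<\alpha$, the interpolated weight $\tilde\rho_j=\hat\rho_j^{1-\theta}\rho_j^\theta\sim j^{((1-\theta)\hat\beta+\theta\beta)/m}$ stays above $1$ because its exponent tends to $(\alpha-\theta)/m>0$. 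With $\zeta^{-1}=\tfrac12+(\tfrac1\tau-\tfrac12)\theta$ and a further Hölder step, this gives $(\|v_\nu\|_Z)_{\nu\in\cF}\in\ell^{p_X}$ with $p_X^{-1}\to(\alpha-\theta)/m+\zeta^{-1}$, while Proposition \ref{propinterpapprox} supplies the spatial rate $t=\theta/m$. Substituting $s\to\alpha/m$, $t=\theta/m$ and this $p_X$ into \eqref{rsecondcase}, the $\theta$-dependence cancels and, for the optimal $\tau$, one finds $r\to\alpha/m$; since $r>t=\theta/m$, we are again in the first case, the effective rate is $\min\{r,t\}=t=\theta/m$, and letting $\theta\uparrow\alpha$ produces the exponent $\alpha/m$ up to the arbitrary loss $\delta$, proving \eqref{wvnonlin1}.

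The main obstacle is the simultaneous construction of the weight sequences: one must exhibit weights that at once satisfy the ellipticity-type smallness condition, the first-order $L^\infty$ gradient condition, and the $\ell^q$ summability of their reciprocals, and then verify that the resulting exponent $p_X$ falls on the favorable side of the threshold $\tfrac{2}{2t+1}$. This is exactly the point where the admissible range of $\tau$ enters, $\tau>1$ for $m=2$ and $\tau\geq\tfrac65$ for $m=3$: it is what drives the first-case threshold down to $\alpha=1$ and thereby makes the spatial rate, rather than a smaller mixed rate $r$, the operative one across the whole range $\alpha>0$.
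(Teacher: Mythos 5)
Your proposal is correct and follows essentially the same route as the paper: Remark \ref{remjacobiweightedl2} with the H\"older bound \eqref{holdertau} for $\alpha>1$, and Corollary \ref{cortaylorwW2pinterp} with Proposition \ref{propinterpapprox} for $0<\alpha<1$, both fed into Theorem \ref{theoconvrate} with the weights and $\tau$ pushed to their limiting values. The only cosmetic difference is that you verify the threshold $p_X\leq \frac{2}{2t+1}$ to land in the ``spatial rate'' case directly, whereas the paper computes the limit $r\to\alpha/m$ from \eqref{rsecondcase} and takes $\min\{r,t\}$ --- these are equivalent.
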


\begin{proof}
	In the case $\alpha > 1$, we combine Theorem \ref{theotaylorwW2p} with the bound \eqref{holdertau} to obtain $(\norm{t_\nu}_{B^\tau})_{\nu\in\cF} \in \ell^p$ for any $p$ such that $\frac1p < \frac{\alpha-1}{m} + \frac1\tau$. By Proposition \ref{waveletV}, we have convergence of the parametric expansion with any $s < \frac\alpha{m}$.
	We now again apply Theorem \ref{theoconvrate}. As $s$, $p$, and $\tau$ approach their limiting values,
	\be\label{nonlinratelimit}
	  \frac{st}{s+t+\frac12 - p^{-1}} \to \frac{\alpha}m,
	\ee
	resulting in the convergence rate $\min\{\frac{\alpha}m,t\} = \frac1m$.
	
	In the case $0<\alpha < 1$, we apply Corollary \ref{cortaylorwW2pinterp} with $0<\theta<\alpha$ and weight sequences $(\rho_j)_{j\geq 1}$ and $(\hat\rho_j)_{j\geq 1}$ constructed as follows: We can choose $\beta\in]-1,0[$ such that $\beta < \alpha - 1$ and $\hat\beta>0$ such that $\beta+1 < \hat\beta < \alpha$ satisfying $(1-\theta)\hat\beta + \theta \beta>0$. We set $\hat\rho_j := 1+c 2^{\hat\beta \abs{\lambda(j)}}$ with $c>0$ sufficiently small to ensure the first inequality in \eqref{cortaylorwW2pinterpcond}. With $J$ the smallest integer such that $c^{1-\theta} 2^{((1-\theta)\hat\beta + \theta\beta) \abs{\lambda(j)}} > 1$, let $\rho_j := 1$ for $j < J$ and $\rho_j := 2^{\beta\abs{\lambda(j)}}$ for $j\geq J$. With these choices, the assumptions of Corollary \ref{cortaylorwW2pinterp} are satisfied and $\tilde \rho_j > 1$. By taking $\beta$ and $\hat\beta$ sufficiently close to $\alpha-1$ and $\alpha$, respectively, we obtain $(\tilde\rho_j^{-1})_{j\geq 1} \in \ell^q(\N)$ for $\frac1q < \frac{\alpha - \theta}m$. By \eqref{holdertau}, with $\zeta$ as in \eqref{taylorwW2pinterp}, we have $(\norm{t_\nu}_Z)_{\nu\in\cF} \in \ell^p(\cF)$ for any $p$ such that
\[
   \frac1p < \frac{\alpha - \theta}m + \frac1\zeta = \frac\alpha{m} + \frac12 +\left( \frac1\tau -\frac12 - \frac1m \right) \theta.
\]
By Proposition \ref{propinterpapprox}, under our conditions on $\tau$, we have $t = \frac\theta{m}$ for elements of $Z = [V,B^\tau]_\theta$ or $[V,B^\tau]_{\theta,r}$.
As $s$, $p$, $\tau$ approach their respective limiting values we again obtain \eqref{nonlinratelimit}, independently of $\theta$.
 For the resulting limiting convergence rate, we thus obtain $\min\{\frac{\alpha}m , \frac{\theta}m \} \to \frac{\alpha}m$ as $\theta\to\alpha$.
\end{proof}

\begin{remark}\label{wavm1}
Following the same lines for $m=1$, we instead obtain the rate $\min\{ \frac{2\alpha}{3} - \delta, 1\}$. Improving this to $\min\{\alpha-\delta,1\}$, analogously to our results for $m>1$, would require summability results for appropriate Besov norms with $\tau = \frac23$. The results of numerical experiments given in \cite[Fig.\ 3(a)]{BCD} for the present setting with $m=1$, however, indicate that the rate $\min\{ \frac{2\alpha}{3} - \delta, 1\}$ is in general sharp. For further numerical illustrations, we refer to \cite[\S5.3]{BCM} and \cite[\S6.2]{BCD}.
\end{remark}

In summary, in contrast to the case of linear approximation in the spatial variable, for $0<\alpha <1$ we arrive at the same rate as for the separate approximations in spatial or parametric variables alone. 
For $\alpha > 1$, the convergence rates obtained here are limited to $\frac1m$, since 
the results obtained in \S 7 do not cover the $\ell^p$ summability of 
the sequence $(\|u_\nu\|_X)_{\nu\in \cF}$ for the higher-order smoothness classes
$X=W^{s,\tau}(D)$ or $B^{s,\tau}_q(D)$ when $s>2$.

\begin{remark}
In \cite{BCD}, it is shown for Legendre expansions that when a suitable wavelet basis is used for the spatial discretization, under Assumption \ref{asswavelets} with sufficiently regular $\psi_j$ one can achieve $s^*$-compressibility of the resulting representation of the parametric diffusion operator on $L^2(U,V,\sigma)$ with $s^*$ arbitrarily close to $\frac\alpha{m}$. This implies that approximations $u_\bn$ as in Corollary \ref{corwavnonlin} can be computed by standard adaptive methods with the number of operations scaling almost linearly in the total number of degrees of freedom $N$ for any $\alpha>0$.
\end{remark}


\begin{thebibliography}{99}
%
\bibitem{AN2015} James H. Adler and V. Nistor,
\emph{Graded mesh approximation in weighted Sobolev spaces and elliptic equations in 2d},
Mathematics of Computation {\bf 84}, Number 295, 2191-2220 (2015).
%
\bibitem{BCD}
M. Bachmayr, A. Cohen, W. Dahmen,
\emph{Parametric {PDE}s: {S}parse or low-rank approximations?}, 
  {arXiv:1607.04444}, 2016.
%
\bibitem{BCM} M. Bachmayr, A. Cohen, G. Migliorati, 
\emph{Sparse polynomial approximation of parametric elliptic PDEs. Part I: affine coefficients,} ESAIM: M2AN \textbf{51}, 321--339, 2017.
%
\bibitem{BCDM} M. Bachmayr, A. Cohen, R. DeVore, G. Migliorati, 
\emph{Sparse polynomial approximation of parametric elliptic PDEs. 
      Part II: lognormal coefficients,} ESAIM: M2AN \textbf{51}, 341--363, 2017.
%
\bibitem{BNZ2005}
C. B{\u{a}}cu{\c{t}}{\u{a}}, V. Nistor and L.T. Zikatanov,
\emph{Improving the rate of convergence of `high order finite elements' on polygons and domains with cusps},
{Numerische Mathematik} {\bf 100} (2) (2005) 165-184.
%
\bibitem{BNTT1} J. Beck, F. Nobile, L. Tamellini, and R. Tempone, 
{\it On the optimal polynomial approximation of stochastic PDEs by Galerkin and collocation methods}, 
Mathematical Models and Methods in Applied Sciences, {\bf 22}, 1-33, 2012.
%
\bibitem{BNTT2} J. Beck, F. Nobile, L. Tamellini, and R. Tempone, 
{\it Convergence of quasi-optimal stochastic Galerkin methods 
for a class of PDEs with random coefficients}, 
Computers and Mathematics with Applications, {\bf 67}, 732-751, 2014.
%
\bibitem{BL}
J. Bergh, J. L\"ofstr\"om, {\it Interpolation Spaces, An Introduction}, Springer, 1976.
%
\bibitem{CCS} A. Chkifa, A. Cohen and C. Schwab,
{\it Breaking the curse of dimensionality in sparse polynomial approximation of parametric PDEs}, 
Journal de Math\'{e}atiques Pures et Appliqu\'{e}s {\bf 103}(2) 400-428, 2015. 

\bibitem{Cia} P. Ciarlet,
{\it The Finite Element Method for Elliptic Problems}, North Holland Publ., 1978.

\bibitem{Co} A. Cohen, 
{\it Numerical analysis of wavelet methods}, 
Studies in mathematics and its applications, Elsevier, Amsterdam, 2003. 

\bibitem{CD} A. Cohen and R. DeVore, 
{\it Approximation of high-dimensional parametric PDEs}, 
Acta Numerica 24, 1-159, 2015.

\bibitem{CDS} A. Cohen, R. DeVore and C. Schwab,
{\it Analytic regularity and polynomial approximation of parametric and stochastic PDEs},  
Anal. and Appl. 9, 11-47, 2011.

\bibitem{De} R. DeVore, 
{\it Nonlinear Approximation}, 
Acta Numerica 7, 51-150, 1998

\bibitem{Di15}
Dinh D\~ung, 
{\it Linear collective collocation and  Galerkin approximations for 
 parametric and stochastic elliptic PDEs}, 
arXiv:1511.03377, 2015, v4 23Jun2016.

\bibitem{GaspMorin09}
F.D. Gaspoz and P. Morin,
{\it Convergence rates for adaptive finite elements}
IMA Journal of Numerical Analysis {\bf 29}(4) (2009), 917-936.

\bibitem{Gr} P. Grisvard,
\emph{Elliptic problems in nonsmooth domains.}
Monographs and Studies in Mathematics, 24.
Pitman (Advanced Publishing Program), Boston, MA, 1985.

\bibitem{GS1} R. G. Ghanem and P. D. Spanos, 
{\it Stochastic Finite Elements: A Spectral Approach}, 2nd edn., Dover, 2007.

\bibitem{GS2} R. G. Ghanem and P. D. Spanos, 
{\it Spectral techniques for stochastic finite elements},
Archive of Computational Methods in Engineering, {\bf 4}, 63-100, 1997.

\bibitem{G} C. J. Gittelson, 
{\it An adaptive stochastic Galerkin method for random elliptic operators}, 
Mathematics of Computation, {\bf 82}, 1515-1541, 2013.

\bibitem{HS} V.H. Hoang and C. Schwab, 
\emph{$N$-term Galerkin Wiener chaos approximation rates 
for elliptic PDEs with lognormal Gaussian random inputs}, 
\emph{M3AS} \textbf{24} (2014) 797--826.

\bibitem{KL}O. Knio and O. Le Maitre, {\it Spectral Methods for Uncertainty 
Quantication: With Applications to Computational Fluid Dynamics}, Springer, 2010.

\bibitem{X} D. Xiu, {\it Numerical methods for stochastic computations: a spectral method
approach}, Princeton University Press, 2010.


\end{thebibliography}
\end{document}